\newtheorem{theorem}{Theorem}
\newtheorem{lemma}[theorem]{Lemma}
\newcommand{\newsection}[1] {\bigskip\section{#1}\setcounter{theorem}{0}\par\noindent}
\renewcommand{\epsilon}{\varepsilon}
\renewcommand{\l}{\lambda}
\renewcommand{\th}{\theta}
\renewcommand{\Im}{\text{Im }}
\newcommand{\g}{{\rm g}}
\newcommand{\RR}{\ensuremath{\mathbb{R}}}
\newcommand{\R}{\ensuremath{\mathbb{R}}}
\newcommand{\CC}{\ensuremath{\mathbb{C}}}
\newcommand{\prtl}{\ensuremath{\partial}}
\newcommand{\hf}{\ensuremath{\frac{1}{2}}}
\newcommand{\supp}{\ensuremath{\text{supp}}}
\newcommand{\veps}{\ensuremath{\varepsilon}}
\newcommand{\cd}{\,\cdot\,}
\begin{document}

\title[Strichartz estimates on manifolds with boundary]{Strichartz estimates and the 
nonlinear Schr\"odinger equation on manifolds with boundary}
\thanks{The authors were supported by National Science
Foundation grants DMS-0801211, DMS-0654415, and DMS-0555162.}

\author{Matthew D.~Blair \and Hart F.~Smith \and Chris~D.~Sogge}

\begin{abstract}
We establish Strichartz estimates for the Schr\"odinger
equation on Riemannian manifolds $(\Omega,\g)$ with boundary, for both
the compact case and the case that $\Omega$ is the exterior
of a smooth, non-trapping obstacle in Euclidean space.
The estimates for exterior domains are scale invariant; the range
of Lebesgue exponents $(p,q)$ for which we obtain these estimates is
smaller than the range known for Euclidean space, but includes
the key $L^4_tL^\infty_x$ estimate, which we use to
give a simple proof of well-posedness results for the energy critical Schr\"odinger
equation in 3 dimensions. Our estimates on compact manifolds involve a loss of derivatives
with respect to the scale invariant index. We use these to establish well-posedness for
finite energy data of certain semilinear Schr\"odinger equations on general compact manifolds with boundary.
\end{abstract}

\maketitle

\newsection{Introduction}

\noindent
Let $(\Omega,\g)$ be a Riemannian manifold with boundary, of dimension 
$n \geq 2$, and let $v(t,x): [0,T] \times \Omega \to \CC$ be the solution
to the Schr\"{o}dinger equation
\begin{equation}\label{schrodeqn}
(i\prtl_t + \Delta_\g)v(t,x) =0\,, \qquad v(0,x)= f(x)\,.
\end{equation}
We assume in addition that $v$ satisfies either Dirichlet or Neumann 
boundary conditions
$$
v(t,x)\big|_{\prtl \Omega} =0 \qquad \text{ or } \qquad \prtl_\nu
v(t,x)\big|_{\prtl \Omega} =0\,,
$$
where $\prtl_\nu$ denotes the normal derivative along the boundary.
In this work, we consider local in time 
Strichartz estimates for such solutions; 
these are a family of space-time integrability estimates of the form
\begin{equation}\label{strichartz}
\|v\|_{L^p([0,T]; L^q(\Omega))} \leq C\|f\|_{H^s(\Omega)}\,.
\end{equation}
Here $H^s(\Omega)$ denotes the $L^2$ Sobolev space of order $s$, defined with 
respect to the spectral resolution of either the Dirichlet or Neumann 
Laplacian.  The Lebesgue exponents will always be taken to satisfy $p,q \geq 2$,
and always the Sobolev index satisfies $s\ge 0$.

The consideration of high frequency bump function solutions
to~\eqref{schrodeqn} shows that $p,q,s$ must satisfy
\begin{equation}\label{scalingineq}
\frac 2p + \frac nq \geq \frac n2 - s\,.
\end{equation}
In the case where equality holds in \eqref{scalingineq} 
the estimate is said to be {\it scale invariant}; 
otherwise, there is said to be a  
\emph{loss of derivatives} in the estimate, as it deviates from the optimal 
regularity predicted by scale invariance.

Strichartz estimates are most well understood over 
Euclidean space, where $\Omega = \RR^n$ and $\g_{ij}=\delta_{ij}$. 
In this case, the scale invariant estimates hold with $s=0$ and $T=\infty$.
See for example Strichartz~\cite{strich77}, Ginibre and Velo~\cite{ginvelo85},
Keel and Tao~\cite{keeltao98}, and references therein.
Scale invariant estimates for $s>0$ then follow by Sobolev embedding;
such estimates will be called {\it subcritical}, as their proof does
not use the full rate of dispersion for the equation~\eqref{schrodeqn}.

This paper is primarily concerned with proving scale invariant 
Strichartz estimates on the domain exterior to a non-trapping
obstacle in $\R^n$, that is,
$\Omega = \RR^n \setminus \mathcal{K}$ for some compact set $\mathcal{K}$ 
with smooth boundary. Non-trapping means that every unit speed broken 
bicharacteristic escapes each compact subset of $\Omega$ in finite time.
While we are only able to prove such estimates for a restricted range
of subcritical $p,q$, we do obtain estimates with applications to
wellposedness in the energy space for semilinear Schr\"odinger equations when $n=3$.

The key new step in this paper is to establish (for the same range of $p,q$)
scale invariant estimates for the semi-classical Schr\"odinger equation
on a general compact Riemannian manifold with boundary.
The step from these local estimates to the case of exterior domains
depends on the local smoothing bounds of Burq, G\'erard, and Tzvetkov 
\cite{bgtexterior}. When $\mathcal{K}$ is assumed to be non-trapping,
they proved that
\begin{equation}\label{locsmooth}
\|\psi v\|_{L^2([0,T]; H^{s+\hf}(\Omega))} \leq C\|f\|_{H^s(\Omega)}\,, \qquad 
\psi \in C_c^\infty (\overline{\Omega})\,, \qquad s \in [0,1]\,.
\end{equation}
This inequality is a natural formulation of the local smoothing estimates for 
Euclidean space which originated in the work of Constantin and 
Saut~\cite{consaut}, Sj\"olin~\cite{Sjolin}, and Vega~\cite{vegasmooth}.  
The estimate~\eqref{locsmooth} was used in \cite{bgtexterior}
to obtain Strichartz estimates with a loss of $1/p$ derivatives,
by combining the gain in regularity in \eqref{locsmooth} 
with Sobolev embedding, in order to prove space-time integrability estimates
near the obstacle.

Improved results were obtained by Anton~\cite{antonext}, which show that 
Strichartz estimates hold with a loss of $\frac{1}{2p}$ derivatives. 
The approach in \cite{antonext} combines the local smoothing estimates
\eqref{locsmooth} with a semi-classical parametrix construction,
rather than Sobolev embedding. We remark that further improvement
is possible by using the parametrix construction of the authors 
in~\cite{BSSpams}, to yield a loss of $\frac{1}{3p}$ derivatives. 
This is currently the best known estimate for critical $p,q$, that is,
$\frac 2p + \frac nq = \frac n2$, except for the case
where $\mathcal{K}$ is strictly convex. For the exterior domain
to a strictly convex $\mathcal{K}$, the full range of Strichartz estimates
(except for endpoints) was obtained by Ivanovici \cite{ivanovici} for
Dirichlet conditions,
using the Melrose-Taylor parametrix construction.

The use of local smoothing to establish Strichartz bounds has origins 
in the work of Journ\'e, Soffer, and Sogge~\cite{jss},
and of Staffilani and Tataru~\cite{ST}.  Both deal with perturbations of the 
flat Laplacian in $\RR^n$, and establish estimates with no loss of 
derivatives.  The paper \cite{jss} considered 
the case of potential terms $-\Delta + V$, 
whereas \cite{ST} considers non-trapping metric perturbations.  In both cases,
one has local smoothing estimates similar to~\eqref{locsmooth}.

More recently,
Planchon and Vega~\cite{planvega} used a bilinear virial identity 
to obtain the scale invariant estimate~\eqref{strichartz} where
$p=q=4$, $s= \frac 14$ in $n=3$ dimensions (along with a range of 
related inequalities), for the Dirichlet problem on non-trapping exterior 
domains. These estimates were applied to semilinear Schr\"odinger equations, 
showing that for defocusing, energy subcritical nonlinearities, one has 
global existence for initial data in $H^1(\Omega)$.
For strictly convex $\mathcal{K}$, the work \cite{ivanovici} 
establishes global existence for the energy critical semilinear equation, 
focusing or defocusing, for small Dirichlet data in $H^1(\Omega)$.

In the present work, we establish the Strichartz estimates
\eqref{strichartz} for a range of subcritical $p,q$. The key
tool is a microlocal parametrix construction previously used for the 
wave equation in~\cite{SmSoBdry} and~\cite{bdrystz}.
This approach treats both Dirichlet and Neumann boundary conditions,
and applies to general non-trapping obstacles.

\begin{theorem}\label{extthm}
Let $\Omega = \RR^n \setminus \mathcal{K}$ be the exterior domain to
a compact non-trapping obstacle with smooth boundary, and
$\Delta$ the standard Laplace operator on $\Omega$, subject to either
Dirichlet or Neumann conditions.
Suppose that $p>2$ and $q<\infty$ 
satisfy
\begin{equation}\label{pqcondn}
\begin{cases}
\frac 3p + \frac 2q \leq 1\,, & n =2\,,\\
\frac 1p + \frac 1q \leq \frac 12\,, & n \geq 3\,.
\end{cases}
\end{equation}
Then for the solution $v=\exp(it\Delta)f$ to the Schr\"{o}dinger
equation \eqref{schrodeqn}, the following estimates hold
\begin{equation}\label{omegastz}
\|v\|_{L^p([0,T]; L^q(\Omega))} \leq C\|f\|_{H^s(\Omega)}\,,
\end{equation}
provided that
\begin{equation}\label{scalingcond}
\frac 2p + \frac nq = \frac n2 - s\,.
\end{equation}
For Dirichlet boundary conditions, the estimates hold with $T=\infty$.
\end{theorem}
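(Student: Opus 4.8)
The plan is to reduce, via a Littlewood--Paley decomposition $f=\sum_\lambda f_\lambda$, to the single-frequency estimate $\|v_\lambda\|_{L^p([0,T];L^q(\Omega))}\le C\lambda^{s}\|f_\lambda\|_{L^2(\Omega)}$ uniformly over dyadic $\lambda\ge 1$, where $v_\lambda=e^{it\Delta}f_\lambda$; the contribution of $\lambda\lesssim 1$ is handled directly using Sobolev embedding on $\Omega$ and conservation of the $L^2$ norm on $[0,T]$. Since $2\le p,q<\infty$ and the exponent $s$ from \eqref{scalingcond} is independent of $\lambda$, one reassembles these into \eqref{omegastz} by the Littlewood--Paley square function inequality for the Dirichlet or Neumann Laplacian on $\Omega$, the triangle inequality in $L^{p/2}_tL^{q/2}_x$, and the square-function characterization of $H^s(\Omega)$. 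It thus suffices to fix $\lambda$; I fix $\psi\in C_c^\infty(\overline\Omega)$ with $\psi\equiv 1$ near $\mathcal{K}$ and split $v_\lambda=\psi v_\lambda+(1-\psi)v_\lambda$.

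For $(1-\psi)v_\lambda$, extension by zero across $\mathcal{K}$ yields a solution of the free Schr\"odinger equation on $\RR^n$ with datum $(1-\psi)f_\lambda$, whose $H^s(\RR^n)$ norm is $\lesssim\|f\|_{H^s(\Omega)}$ since $1-\psi$ vanishes near $\prtl\Omega$, and with forcing $-[\Delta,\psi]v_\lambda$, a first order operator applied to $v_\lambda$ supported in a fixed compact subset of $\Omega$. By Duhamel's formula, the global Euclidean Strichartz estimates for the given $(p,q)$, a Christ--Kiselev argument (permissible because $p>2$) to replace $\int_0^t$ by $\int_0^\infty$, and the dual of the classical local smoothing estimate, the forcing contribution is controlled by $\|[\Delta,\psi]v_\lambda\|_{L^2([0,T];H^{s-1/2})}$, which \eqref{locsmooth} bounds by $C\|f_\lambda\|_{H^s(\Omega)}$; for $s>1$ one commutes derivatives through the equation and iterates. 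For Dirichlet conditions \eqref{locsmooth} and the Euclidean Strichartz estimates both hold globally in time, giving $T=\infty$.

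The crux is $\psi v_\lambda$, near the obstacle. I partition $[0,T]$ into $\sim\lambda T$ intervals $I_k$ of length $\lambda^{-1}$ and on each write $v_\lambda(t)=e^{i(t-t_k)\Delta}v_\lambda(t_k)$. On a time interval of length $\lambda^{-1}$ the governing estimate is the scale invariant semiclassical Strichartz estimate on a compact Riemannian manifold with boundary --- the key new result of the paper, valid for $(p,q)$ in the range \eqref{pqcondn} and established by the microlocal boundary parametrix of \cite{SmSoBdry} and \cite{bdrystz}. Applying it after replacing $v_\lambda(t_k)$ by $\psi_1 v_\lambda(t_k)$ for a slightly larger cutoff $\psi_1\in C_c^\infty(\overline\Omega)$ --- the remaining part having propagated out of the relevant region over a time $\lambda^{-1}$, up to errors of size $O(\lambda^{-N}\|f_\lambda\|_{L^2})$ --- gives $\|\psi v_\lambda\|_{L^p(I_k;L^q)}\le C\lambda^{s}\|\psi_1 v_\lambda(t_k)\|_{L^2}+O(\lambda^{-N}\|f_\lambda\|_{L^2})$. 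Taking $p$-th powers and summing over $k$, using $p\ge 2$ so that $\|\cd\|_{\ell^p}\le\|\cd\|_{\ell^2}$, and using \eqref{locsmooth} with $s=0$ (together with a Riemann sum comparison) in the form $\sum_k\|\psi_1 v_\lambda(t_k)\|_{L^2}^2\lesssim\lambda\,\|\psi_1 v_\lambda\|_{L^2([0,T];L^2)}^2\lesssim\lambda\cdot\lambda^{-1}\|f_\lambda\|_{L^2}^2$, collapses the $\sim\lambda T$ subintervals with no net loss and produces $\|\psi v_\lambda\|_{L^p([0,T];L^q(\Omega))}\le C\lambda^{s}\|f_\lambda\|_{L^2}$.

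I expect the main obstacle to be the scale invariant semiclassical estimate invoked above: constructing and summing the boundary parametrix --- the transverse reflection terms, and more delicately the glancing and gliding contributions --- and extracting the full dispersive decay only over the restricted range \eqref{pqcondn}, which is precisely the loss incurred by the parametrix near $\prtl\Omega$. A secondary point needing care is the calibration of the subinterval length $\lambda^{-1}$ against the $\lambda^{-1/2}$ gain in $L^2_t$ furnished by \eqref{locsmooth}, so that the summation over subintervals lands exactly on the scaling exponent $s$ of \eqref{scalingcond}, with the spatial-tail errors in the near-obstacle step and the commutator forcing in the far step absorbed harmlessly.
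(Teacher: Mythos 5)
Your architecture coincides with the paper's: split $v$ by a cutoff near the obstacle; handle the exterior piece by Euclidean Strichartz estimates, a Christ--Kiselev/dual local smoothing argument, and \eqref{locsmooth} applied to the commutator forcing; and handle the near piece by Littlewood--Paley decomposition plus a scale invariant semiclassical Strichartz estimate on a compact manifold with boundary over time intervals of length $\lambda^{-1}$, recombining the $\sim\lambda T$ intervals by means of the $\lambda^{-1/2}$ gain in $L^2_t$ from local smoothing. That accounting is exactly the point, and it is correct. The one place where your route genuinely diverges from the paper's is in how the near piece is fed into the semiclassical estimate, and there it has a gap.

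You pass to the homogeneous flow $e^{i(t-t_k)\Delta}\bigl(\psi_1 v_\lambda(t_k)\bigr)$ and discard the complementary part ``up to $O(\lambda^{-N})$'' by finite speed of propagation. For the exterior Dirichlet/Neumann propagator this localization is not free: at frequency $\lambda$ the group velocity is $\sim\lambda$, so over a time $\lambda^{-1}$ mass travels an $O(1)$ distance, and showing that what starts outside $\supp\psi_1$ contributes only $O(\lambda^{-N})$ on $\supp\psi$ requires a semiclassical propagation estimate for the boundary value problem (delicate near glancing directions) that you would have to prove; the naive commutator bound only gives an $O(1)$ error. The paper sidesteps this entirely by keeping $\psi v_\lambda$ as an exact solution of an inhomogeneous equation on a compact manifold $\tilde\Omega$ with forcing $[\Delta,\psi]v_\lambda$, and by proving the semiclassical estimate (Theorem \ref{semiclassthm}) with an $L^2_tL^2_x$ forcing norm on the right-hand side, which \eqref{locsmooth} controls directly; your pigeonhole choice of $t_k$ is then unnecessary. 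A second, related point: after multiplying by a cutoff (and after the even/odd extension across $\partial\Omega$ needed to set up the parametrix), the data is no longer spectrally localized at frequency $\lambda$, and the frequency content $\mu>\lambda$ generated near the boundary is a genuine difficulty --- the paper devotes much of Section 2 to it, via a conic tangential/normal decomposition and the anisotropic norms $H^{r,\lambda}$. This, together with the wave packet parametrix of Sections 3--4 that yields the restricted range \eqref{pqcondn}, is the actual content of the theorem; you correctly flag it as the main obstacle, but be aware that it is not an off-the-shelf compact-manifold estimate that can be cited.
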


That one may take $T=\infty$ in \eqref{omegastz} for 
Dirichlet boundary conditions is a consequence of the fact
that \eqref{locsmooth} holds for $T=\infty$ in the Dirichlet case.

\medskip

We now consider estimates for compact Riemannian manifolds $\Omega$, with 
$\Delta_\g$ the Laplace-Beltrami operator for $\g$. Burq, G\'erard, and Tzvetkov showed
in \cite{burq1} that for $p>2$ estimates hold with a loss of 
$\frac 1p$ derivatives in case $\prtl \Omega = \emptyset$. 
The same result was established for compact manifolds with 
geodesically concave boundary in~\cite{ivanovici}.
For general boundaries, we establish estimates with the same loss of $\frac 1p$ 
derivatives, valid for $(p,q)$ satisfying~\eqref{pqcondn}.  For such $(p,q)$, 
this is an improvement over the estimates of~\cite{BSSpams}, which involve
a loss of $\frac4{3p}$ derivatives.

\begin{theorem}\label{intthm}
Let $\Omega$ be a compact Riemannian manifold with boundary.
Suppose that $p > 2$ and $q<\infty$
satisfy~\eqref{pqcondn}.  Then for the solution 
$v=\exp(it\Delta_\g)f$ to the Schr\"odinger
equation \eqref{schrodeqn}, the following estimates hold for fixed finite $T$
$$
\|v\|_{L^p([0,T]; L^q(\Omega))} \leq C\|f\|_{H^{s+\frac 1p}(\Omega)}
$$
for $p$, $q$, $s$ satisfying~\eqref{scalingcond}.
\end{theorem}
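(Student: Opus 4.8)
The plan is to reduce the compact manifold estimate to a semiclassical, frequency-localized estimate, which is precisely the ``key new step'' advertised in the introduction. First I would perform a Littlewood--Paley decomposition with respect to the spectral resolution of $\Delta_\g$: write $f = \sum_{\lambda} f_\lambda$ where $f_\lambda = \chi(\lambda^{-2}\Delta_\g)f$ is spectrally localized to frequencies $|\xi| \sim \lambda$ (dyadic $\lambda$), plus a low-frequency piece handled trivially by Sobolev embedding on the compact $\Omega$. Since $p,q \geq 2$ and the Schr\"odinger evolution commutes with spectral multipliers, the Littlewood--Paley square function estimate together with Minkowski's inequality reduces matters (up to the usual $\ell^2$-summation in $\lambda$, which is where no extra loss enters for $p,q\ge 2$) to proving, for each dyadic $\lambda$,
\begin{equation}\label{freqloc}
\|e^{it\Delta_\g}f_\lambda\|_{L^p([0,T];L^q(\Omega))} \leq C\,\lambda^{s+\frac1p}\|f_\lambda\|_{L^2(\Omega)}.
\end{equation}

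Next I would rescale to semiclassical form. Setting $h = \lambda^{-1}$ and rescaling time by $t \mapsto ht$, the operator $e^{it\Delta_\g}f_\lambda$ becomes (after relabeling) $e^{i(t/h)h^2\Delta_\g}$ applied to an $h$-frequency-localized datum, i.e.\ the propagator $e^{i(t/h)P}$ with $P = h^2\Delta_\g$ the semiclassical Laplacian, run over a time interval of length $\sim h^{-1}T$. One then subdivides $[0,h^{-1}T]$ into $\sim h^{-1}$ intervals of unit length; on each such interval the semiclassical Strichartz estimate (the new result, proved via the microlocal parametrix construction of \cite{SmSoBdry}, \cite{bdrystz} adapted to the Schr\"odinger equation, which gives scale-invariant bounds on time intervals of semiclassical size $h$, equivalently unit length after the $t\mapsto t/h$ scaling) yields the estimate with the scale-invariant power of $h$. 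Summing the $\ell^p$ norm over the $\sim h^{-1}$ intervals costs a factor $h^{-1/p} = \lambda^{1/p}$, and undoing the rescaling converts the scale-invariant $h$-power into $\lambda^{s}$ via \eqref{scalingcond}; together this produces exactly the $\lambda^{s+1/p}$ in \eqref{freqloc}. The restriction to $(p,q)$ satisfying \eqref{pqcondn} is inherited from the range in which the microlocal parametrix delivers the scale-invariant semiclassical bound.

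The main obstacle is the semiclassical estimate itself --- establishing that on a compact manifold with boundary, for $h$-frequency-localized data and on a time interval of semiclassical length, one has the scale-invariant space-time bound for $(p,q)$ as in \eqref{pqcondn}, uniformly in $h$. The difficulty is the boundary: near $\prtl\Omega$ one must handle reflected bicharacteristics, glancing and gliding rays, and the failure of a global FBI/wave-packet parametrix. The strategy, following \cite{SmSoBdry} and \cite{bdrystz}, is to localize to a coordinate patch, use a microlocal parametrix that at each reflection is governed by the broken-bicharacteristic flow, control the number of reflections over a semiclassical time interval (bounded, by non-trapping is not even needed here since the time is short and $\Omega$ compact), and prove dispersive/square-function estimates for each parametrix piece. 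The glancing region requires separate treatment via the second-microlocalization or a Friedlander-model comparison; this is the technical heart. Once the fixed-$h$, semiclassical Strichartz estimate is in hand, the passage to Theorem \ref{intthm} via the rescaling and summation above is routine, and requires no hypothesis on the geometry beyond compactness.
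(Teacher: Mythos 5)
Your proposal follows essentially the same route as the paper: a Littlewood--Paley decomposition plus square function estimate reduces matters to a frequency-localized bound, which is obtained by subdividing $[0,T]$ into $\approx\lambda$ intervals of length $\lambda^{-1}$, applying the semiclassical estimate (the paper's Theorem \ref{semiclassthm}) on each, and summing the $L^p$ norms at a cost of $\lambda^{1/p}$. The only cosmetic difference is in how you describe the proof of the semiclassical estimate itself: the paper does not track reflections or use a Friedlander-model comparison, but instead extends the metric evenly across the boundary to a Lipschitz metric on $\R^n$ and runs the angularly localized wave-packet parametrix of \cite{SmSoBdry} there.
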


As with \cite{burq1} and \cite[Corollary 1.5]{ivanovici}, the loss of $\frac 1p$ arises
as a consequence of using a representation for solutions that is
valid only in a local coordinate chart; that is, on a semi-classical time
scale.

In the last two sections of this paper we present applications of the above theorems
to well-posedness of semilinear Schr\"odinger equations in three space dimensions 
with finite energy data.
In Section \ref{wpextdom}, we use Theorem \ref{extthm} and interpolation to establish
the $L^4_tL^\infty_x$ Strichartz estimate.
This estimate yields a simple
proof of well-posedness for small energy data to the energy critical equation on
exterior domains, a result first established by Ivanovici and Planchon \cite{IP}.
In Section \ref{bddomain}, we establish a variant in three dimensions of Theorem \ref{intthm}
for the case $p=2$, for data $u$ localized to dyadic frequency scale $\lambda$. The estimate involves
a loss of $(\log\l)^2$ relative to the estimates of \cite{burq1}. Following the Yudovitch
argument as in \cite[Section 3.3]{burq1}, we use this to establish well-posedness
for finite energy data to certain semilinear Schr\"odinger equations, on general three dimensional compact manifolds with boundary. The logarithmic loss in the
estimates restricts our result to slower growth nonlinearities than considered in \cite{burq1}
for manifolds without boundary. For particular three dimensional manifolds without boundary, recent results have been
obtained for the energy critical case by Herr \cite{Herr}, Herr, Tataru, and Tzvetkov \cite{HTT}, 
and Ionescu and Pausader \cite{IoPa}.

The outline of this paper is as follows.  In section 2, we reduce Theorems \ref{extthm}-\ref{intthm} 
to estimates on the unit scale within a single coordinate chart.  
Section 3 outlines the angular localization approach from~\cite{SmSoBdry}, 
and introduces a wave packet parametrix construction.  Estimates on the 
parametrix are then developed in section 4.  We conclude in sections 5 and 6 with the
applications to semilinear Schr\"odinger equations.

Throughout this paper we use the following notation.
The expression $X \lesssim Y$ means
that $X \leq CY$ for some $C$ depending only on the manifold,
metric, and possibly the triple $(p,q,s)$ under
consideration. Also, we abbreviate $L^p(I;L^q(U))$
by $L^p L^q(I \times U)$ or by $L^p_T L^q(U)$ when $I=[0,T]$. If $U=\R^n$ we write $L^p_TL^q$. 
As will be seen below, the last component of an $n$-vector will take on 
special meaning, hence we will often write $x=(x',x_n)$ so that $x'$ 
denotes the first $n-1$ components.

We conclude this introduction with a remark on the Sobolev spaces
that we use in the case of exterior domains. In the above theorems,
the Sobolev space $H^s(\Omega)$ and the
operator $\exp(it\Delta)$ are defined using the spectral
resolution of $\Delta$ subject to the chosen Dirichlet or Neumann
boundary condition $B$; in particular, the linear evolution
preserves $H^s(\Omega)$. The space $H^2(\Omega)$ is then
equal to the subspace of
$H^2(\overline\Omega)$ satisfying $Bu=0$, and for $0\le s\le 2$, the space
$H^s(\Omega)$
can be defined by interpolation. For $s\ge 2$, these spaces satisfy
$u\in H^s(\Omega)$ if and only if $Bu=0$ and $\Delta u\in H^{s-2}(\Omega)$.
Thus, for large values of $s$, a function in $H^s(\Omega)$
satisfies the linear compatibility conditions $B(\Delta^k u)=0$,
for $k$ for which this is defined. These compatibility conditions are
necessary to bootstrap the local smoothing estimates \eqref{locsmooth}
to higher orders $s$, as well as to insure $v(t,\cd)\in H^s(\Omega)$,
which is required to handle commutator terms with cutoff functions.
We will also use that
$$
\|v\|_{H^s(\Omega)}\approx 
\|\psi v\|_{H^s(\tilde\Omega)}+\|(1-\psi)v\|_{H^s(\R^n)}\,,
$$
where $\psi\in C_c^\infty(\overline\Omega)$ is such that $1-\psi$ vanishes
on a neighborhood of $\partial\Omega$, and $\tilde\Omega$ is a compact
manifold with boundary in which $\Omega\cap\supp(\psi)$ embeds isometrically;
for example $\tilde\Omega=\Omega\cap [-R,R\,]^n$ 
with periodic boundary conditions and $R$ sufficiently large.

We use $H^s(\overline\Omega)$ to denote the space of
extendable elements, with no
boundary conditions. For $\Omega$ an exterior domain, $H^s(\overline\Omega)$
consists of
restrictions of functions in $H^s(\R^n)$ to $\Omega$ 
with the quotient norm (minimal norm of an extension); 
for $\Omega$ compact we embed $\Omega$ in a compact
manifold $\Omega'$ without boundary, and
$H^s(\overline\Omega)$ consists of restrictions of elements $H^s(\Omega')$.
By elliptic regularity, $H^s(\Omega)\subset H^s(\overline\Omega)$.

\vfill
\pagebreak

%%%%%%%%%%%%%%%%%%%%%%%%%%%%%%%%%%%%%%%%%%%%%%%%%%%

\newsection{Preliminary reductions}

\noindent
In this section, we reduce the inequalities in
Theorems~\ref{extthm} and~\ref{intthm} to estimates on solutions
to a pseudodifferential equation defined in a
coordinate chart near the boundary. We start by considering the case
of Theorem~\ref{extthm}.

For $\Omega=\R^n \setminus \mathcal{K}$,
we take $\psi\in C_c^\infty(\overline\Omega)$ such that 
$1-\psi$ vanishes on a neighborhood of $\partial\Omega=\partial\mathcal{K}$. 
Then
$v_0=(1-\psi)v$ satisfies the inhomogeneous Schr\"odinger equation on $\R^n$:
$$
\bigl(i\partial_t+\Delta\bigr)v_0=[\psi,\Delta]v\,,
\qquad\quad
v_0|_{t=0}=(1-\psi)f\,.
$$
Here, $(1-\psi)f\in H^s(\R^n)$, and by \eqref{locsmooth} we have
$[\Delta,\psi]v\in L^2_TH^{s-\frac 12}(\R^n)$. (Although stated
only for $s\in[0,1]$ in \cite{bgtexterior}, it is easy to see
by a bootstrap argument and interpolation
that \eqref{locsmooth} holds for all $s\ge 0$, where $H^s$ is the
is the intrinsic Sobolev space for the Dirichlet/Neumann conditions
as above.)
The Strichartz estimates for $v_0$ then follow from Proposition 2.10 of
\cite{bgtexterior} together with Sobolev embedding.
While \cite{bgtexterior} considers the case $s\in [0,1]$, the result
of Proposition 2.10 of \cite{bgtexterior}
follows for all $s>0$, since the free
Schr\"odinger propagator $\exp(it\Delta)$ on $\R^n$ commutes with differentiation.

We are thus reduced to establishing estimates on the term $\psi v$.
We isometrically embed a neighborhood of $\supp(\psi)$ into a compact manifold
$(\tilde\Omega,\g)$ with boundary, where $\partial\tilde\Omega=\partial\Omega$. 
Then $v_1=\psi v$ satisfies the inhomogeneous
Schr\"odinger equation on $\tilde\Omega$:
$$
\bigl(i\partial_t+\Delta_\g\bigr)v_1=[\Delta,\psi]v\,,
\qquad\quad
v_1|_{t=0}=\psi f\,.
$$
By \eqref{locsmooth}, we are reduced to establishing the following
estimate over a compact manifold with boundary $\tilde\Omega$,
\begin{equation}\label{squareest}
\|v\|_{L^p_TL^q(\tilde\Omega)}\lesssim
\|v\|_{L^2_TH^{s+\frac 12}(\tilde\Omega)}+
\|(i\partial_t+\Delta_\g)v\|_{L^2_TH^{s-\frac 12}(\tilde\Omega)}\,.
\end{equation}
Here we use that $[\Delta,\psi]$ vanishes near $\partial\Omega$,
hence maps $H^{s+\frac 12}(\Tilde\Omega)\rightarrow H^{s-\frac 12}(\Tilde\Omega)$.

We next take a Littlewood-Paley decomposition of $v$ in the $x$ variable
with respect to the spectrum for $\Delta_\g$. Precisely, we write
$$
v=\beta_0(-\Delta_\g)+\sum_{j=1}^\infty \beta\bigl(2^{-2j}(-\Delta_\g)\,\bigr)v
\equiv \sum_{j=0}^\infty v_j\,,
$$
where $\sum_{j=1}^\infty \beta(2^{-2j}s)=1$ for $s\ge 2$, and
$\beta$ is supported by $s\in[\frac 12, 2]$. The low frequency
terms are easily dealt with by Sobolev embedding, since the right hand
side of \eqref{squareest} 
controls the $L^p_TH^{s-\frac 12}$ norm of $v$ for $2\le p\le\infty$.
The following square function estimate holds, for example by heat kernel
methods,
\begin{equation}\label{sqfn}
\|v\|_{L^p_TL^q(\tilde\Omega)}\approx
\bigl\|\bigl(\,\textstyle\sum_j|v_j|^2\bigr)^\hf\bigr\|_{L^p_TL^q(\tilde\Omega)}
\le
\Bigl(\sum_j\|v_j\|_{L^p_TL^q(\tilde\Omega)}^2\Bigr)^\hf\,,
\end{equation}
where we use $p,q\ge 2$ in the last step.
By orthogonality, the desired estimate
\eqref{squareest} would then follow
as a consequence of the following estimate,
\begin{equation*}
\|v_j\|_{L^p_TL^q(\tilde\Omega)}\lesssim
2^{j(s+\frac 12)}\|v_j\|_{L^2_T L^2(\tilde\Omega)}+
2^{j(s-\frac 12)}\|(i\partial_t+\Delta_\g)v_j\|_{L^2_T L^2(\tilde\Omega)}\,.
\end{equation*}
Finally, we divide $[0,T]$ into intervals of length $2^{-j}$ and note
that, since $p,q\ge 2$, by the Minkowski inequality
it suffices to prove the above on each subinterval; that is, for $T=2^{-j}$.
To summarize, Theorem \ref{extthm} is thus reduced to establishing
the following semiclassical result.

\begin{theorem}\label{semiclassthm}
Let $\Omega$ be a compact Riemannian manifold with boundary, and
$\Delta_\g$ the Laplace-Beltrami operator, 
subject to either Dirichlet or Neumann boundary conditions.
Suppose that $p > 2$ and $q<\infty$ satisfy~\eqref{pqcondn} 
and~\eqref{scalingcond}.
 
Suppose also that, for all $t$, $v_\l(t,\cd)$ is spectrally localized
for $-\Delta_\g$ to the range $[\frac 14 \l^2,4\l^2]$.
Then the following estimate holds, uniformly over $\l$,
\begin{equation}\label{squarelest}
\|v_\l\|_{L^p_{\l^{-1}}L^q(\Omega)}\lesssim
\l^s\Bigl(\,\l^{\hf}\|v_\l\|_{L^2_{\l^{-1}} L^2(\Omega)}+
\l^{-\hf}\|(i\partial_t+\Delta_\g)v_\l\|_{L^2_{\l^{-1}} L^2(\Omega)}\Bigr)\,.
\end{equation}
\end{theorem}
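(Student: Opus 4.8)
The plan is to prove Theorem~\ref{semiclassthm} by a microlocal decomposition near the boundary, reducing the problem to a dispersive estimate for a model half-wave-type parametrix in a single boundary coordinate chart. First I would use a partition of unity to split $v_\l$ into a piece supported away from $\partial\Omega$ and finitely many pieces supported in boundary charts. Away from the boundary, the standard semiclassical (flat, or non-trapping interior) Strichartz estimate with the $\l^{1/2}$/$\l^{-1/2}$ loss already gives \eqref{squarelest}: on the time scale $\l^{-1}$, one can freeze coefficients and invoke the known interior result, with the $L^2_{\l^{-1}}H^{s+1/2}$ and $L^2_{\l^{-1}}H^{s-1/2}$ terms absorbing the errors from commutators and from the Duhamel term $(i\prtl_t+\Delta_\g)v_\l$. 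Near the boundary I would choose geodesic normal coordinates $x=(x',x_n)$ with $x_n\ge 0$, in which $\Delta_\g$ has a favorable form, and the key tool (as announced) is the angular localization and wave packet parametrix from~\cite{SmSoBdry} and~\cite{bdrystz}.

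The central step is to build, in each boundary chart, a parametrix for $\exp(it\Delta_\g)$ acting on data spectrally localized to $|\xi|\sim\l$, on the time interval $|t|\le\l^{-1}$, respecting the Dirichlet or Neumann condition. Following~\cite{SmSoBdry}, I would decompose the (co)sphere of tangential frequencies into caps of angular width $\sim\l^{-1/2}$ relative to the normal direction, and for each cap write the solution as a superposition of wave packets that reflect off the boundary according to geometric optics. The reflection is handled by the method of images near a cap transverse to $\prtl\Omega$, while the delicate glancing/tangential regime is exactly where the $\l^{-1/2}$ angular scale and the resulting loss of derivatives enters; the parametrix there is only accurate up to errors that are acceptable precisely because we work on the semiclassical time scale $\l^{-1}$. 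I would then prove a fixed-time dispersive bound of the form $\|\exp(it\Delta_\g)P_\l\|_{L^1\to L^\infty}\lesssim \l^{?}\,|t|^{-n/2}$ for $|t|\le\l^{-1}$, or rather the relevant $TT^*$ estimate, with the loss compatible with the right-hand side of \eqref{squarelest}; combining this with the energy estimate and the Keel--Tao machinery (interpolating to the admissible $(p,q)$ satisfying \eqref{pqcondn}) yields the homogeneous estimate, and the Duhamel/Christ--Kiselev argument converts the source term $(i\prtl_t+\Delta_\g)v_\l$ into the $\l^{-1/2}$-weighted $L^2_{\l^{-1}}L^2$ term.

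The main obstacle, as usual for boundary value problems, is the analysis of the parametrix in the glancing region, where rays graze the boundary and neither the method of images nor a naive WKB ansatz applies cleanly. This is why the theorem is restricted to the range~\eqref{pqcondn}: the wave packet parametrix of~\cite{SmSoBdry} controls the relevant oscillatory integrals only for $(p,q)$ in that range, and outside it the counterexamples of Ivanovici and others show genuine obstructions. I would handle this by splitting the angular caps into "elliptic" (transversal), "hyperbolic," and "glancing" zones, estimating the first two by explicit reflected wave packets and treating the glancing zone by a separate argument bounding the measure of the bad set of directions and using the $L^2$-based local smoothing already built into the right-hand side of \eqref{squarelest}. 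Finally I would reassemble: sum over the finitely many charts, use almost-orthogonality of the angular pieces in $L^2$ and the square-function bound \eqref{sqfn} (with $p,q\ge 2$) to pass from frequency-localized pieces back to $v_\l$, and verify that the constants are uniform in $\l$, which holds because all constructions live at unit scale after the semiclassical rescaling $x\mapsto\l x$, $t\mapsto\l^2 t$.
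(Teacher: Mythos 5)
Your high-level plan (localize to boundary charts, invoke the wave packet machinery of \cite{SmSoBdry}, prove a dispersive bound, interpolate) points in the right direction, but two of your central steps are not what the paper does, and as stated they would not go through. First, the boundary condition is not handled by a method of images with reflected geometric-optics wave packets. The method of images produces an exact solution only for very special geometries; for a general curved boundary the paper instead extends the \emph{metric} evenly across $x_n=0$ in boundary normal coordinates, obtaining a Lipschitz metric $\g(x',|x_n|)$ on an open set of $\R^n$ with no boundary, and extends $v_\l$ oddly or evenly so that the extended function solves the extended equation. The price is low regularity of the coefficients, and a large part of the argument (the $H^{r,\l}$ norms, truncation of $\g$ to frequencies $\le c\mu$, Coifman--Meyer commutator bounds, and the separate treatment of the tangential and normal frequency components created by the extension at frequencies $\mu>\l$) exists precisely to pay that price. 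Your proposal has no substitute for this step. Second, the angular decomposition is not into caps of a single width $\l^{-1/2}$: it is dyadic in the normal frequency $\xi_n$, over angles $\theta_j=2^{-j}$ down to $\l^{-1/3}$, with the coefficients regularized at the angle-dependent scale $\l^{1/2}\theta_j^{-1/2}$ and the estimates proved on time slabs of width $\veps\theta_j$. Reassembling these pieces is not a matter of almost-orthogonality; it requires the nested-slab branching argument and the flux estimate \eqref{fluxest}, and the inhomogeneous term is handled via the $V^2_q$/$U^2_q$ spaces of Koch and Tataru rather than Christ--Kiselev.

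Your explanation of the restriction \eqref{pqcondn} is also off the mark. The fixed-time kernel bound is not $|t|^{-n/2}$ up to time $\l^{-1}$; after rescaling it is the anisotropic bound \eqref{dispersive}, which gives the full $(n-1)/2$ powers of decay in the tangential variables immediately but withholds the last $1/2$ power until $|t|\gtrsim\mu^{-1}\theta^{-2}$. Discarding that last factor is exactly what forces $\frac1p+\frac1q\le\frac12$ (for $n\ge 3$); it is a limitation of this parametrix's dispersion in the normal direction, not a consequence of known counterexamples (indeed the full range is known for strictly convex obstacles). Relatedly, there is no separate ``measure of the bad set of glancing directions'' argument: the tangential piece $\theta=\mu^{-1/2}$ is estimated by the same wave packet dispersive bound, which becomes uniform at that scale because the symbol estimates \eqref{qest} are. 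Finally, the passage from $L^1\to L^\infty$ and $L^2\to L^2$ bounds to \eqref{squarelest} is done by interpolation plus Hardy--Littlewood--Sobolev rather than Keel--Tao, a minor point, but you should be aware that the endpoint machinery is neither needed nor used here.
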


\medskip

We observe that Theorem \ref{intthm} also follows as a consequence
of \eqref{squarelest}. To see this, we divide $[0,T]$ into 
subintervals of length $\l^{-1}$, and note that for 
$v_\l=\exp(it\Delta_\g)f_\l$,
on each subinterval the right hand side of \eqref{squarelest}
is bounded by $\l^s\|f_\l\|_{L^2(\Omega)}$. Summing the $L^p$ norm over 
a total of $\approx \l$ subintervals leads to
$$
\|v_\l\|_{L^p_TL^q(\Omega)}\lesssim
\l^{s+\frac 1p}\|f_\l\|_{L^2(\Omega)}\approx\|f_\l\|_{H^{s+\frac 1p}(\Omega)}\,.
$$
Applying the square function estimate \eqref{sqfn} as above 
yields Theorem \ref{intthm}.

We will establish \eqref{squarelest} by the methods developed in \cite{SmSoBdry}
and \cite{bdrystz} to obtain dispersive estimates for the wave
equation on manifolds with boundary.
We start by
taking a finite partition of unity over $\Omega$, subordinate to a cover
by coordinate patches. We restrict attention to a coordinate patch centered
on $\partial\Omega$; the interior terms can be handled by the methods
of \cite{burq1}, or by the parametrix construction of this paper.
Thus, let $\psi\in C^\infty_c(\overline\Omega)$ be supported in a 
boundary normal coordinate patch along $\partial\Omega$.
The function $\psi v_\l$ is not sharply spectrally localized,
but does remain spectrally concentrated in
frequencies $\le\l$. Precisely, for all $k\ge 0$,
\begin{equation}\label{specloc}
\|\psi v_\l\|_{L^2_{\l^{-1}}H^k(\overline\Omega)}
\lesssim\|v_\l\|_{L^2_{\l^{-1}}H^k(\Omega)}
\lesssim \l^k\|v_\l\|_{L^2_{\l^{-1}}L^2(\Omega)}\,,
\end{equation}
and the same holds with $v_\l$ replaced by
$(i\partial_t+\Delta_\g)v_\l$.

Letting $x_n$ denote geodesic distance to the boundary, and $x'$ coordinates
on $\partial\Omega$, in boundary normal coordinates the Laplace operator takes the form
$$
\Delta_g v = \rho^{-1}\sum_{1 \leq i,j \leq n}\prtl_i \left(
\,\g^{ij} \rho \, \prtl_j v\, \right)
$$
where $\rho = \sqrt{\det \g_{lk}}$ and $\g^{ij}$ denotes the
inverse of the metric $\g_{lk}$. Furthermore, $\g^{in} = \g^{ni} = \delta_{in}$,
so there are no mixed $\partial_{x'}\partial_{x_n}$ terms.

We now extend the metric $\g(x',x_n)$ in an even manner across $x_n=0$;
the new metric $\g(x',|x_n|)$, which we also denote by $\g$,
is defined on an open subset of $\R^n$,
and is of Lipschitz regularity. We extend the solution $\psi v_\l$
in an odd or even fashion, corresponding to Dirichlet or Neumann boundary
conditions, to obtain a $C^{1,1}$ function. We will assume $\psi$
is chosen so that $\psi(x',x_n)$ is independent of $x_n$ near $x_n=0$.
Since the extended Laplace operator is even, the regularity of $\g$ and
$v_\l$ show that the extended solution satisfies the extended equation 
across $x_n=0$,
$$
(i\prtl_t + \Delta_\g)(\psi v_\l)=[\Delta_\g,\psi]v_\l
+\psi (i\prtl_t + \Delta_\g)v_\l\,,
$$
where $\Delta_\g v_\l$ is extended oddly/evenly
as is $v_\l$, and $\psi$ is even.

By choosing sufficiently small coordinate patches, and rescaling if
necessary, we may assume that $\g$ extends to all of $\R^n$, such that
$$
\|\g^{ij}-\delta^{ij}\|_{C^{0,1}(\R^n)}\le c_0\ll 1\,,
\qquad\quad
\g^{ij}=\delta^{ij}\;\;\text{if}\;\;|x|>1\,.
$$

The odd (respectively even) extension operator maps functions in
$H^r(\overline{\R^n_+})$ satisfying $f(x',0)=0$ 
(respectively $\partial_{x_n}f(x',0)=0$ ) to functions in $H^r(\R^n)$,
provided $r\in [0,\frac 52)$. The extension also commutes with differentiation
in the $x'$ variables. We observe that multiplication by functions
such as $\g$ or $\rho$ preserves $H^r(\R^n)$ for $r\in [0,\frac 32)$, and
multiplication by $\partial_x\rho$ preserves 
$H^r(\R^n)$ for $r\in [0,\frac 12)$. This can be seen, e.g., from the
fact that $\langle \xi\rangle^{\frac 12-\epsilon}$ is an $A_2$ weight
in one dimension, and that $\partial_x\rho$ is a Calder\'on-Zygmund type
multiplier in $x_n$.

It follows that the bound \eqref{specloc} holds to a limited extent for the
extension of $\psi v_\l$ to $\R^n$.
To quantify this, we introduce the following family of norms, for $r\ge 0$,
$$
\|f\|_{H^{r,\l}}=
\sum_{|\alpha|\le N}\Bigl(\l^{-|\alpha|}\|\partial^\alpha_{x'}f\|_{L^2(\R^n)}
+\l^{-|\alpha|-r}\|\partial^\alpha_{x'}f\|_{H^r(\R^n)}\Bigr)\,,
$$
and observe that $\|f\|_{H^{\sigma,\l}}\lesssim \|f\|_{H^{r,\l}}$ if $0\le\sigma\le r$.
Here $N$ is taken to be a fixed but sufficiently large number,
which we allow to change in a given inequality. 
However, for the results of this paper $N$ need never exceed $n+2$.

By \eqref{specloc} and the
above, it holds that for $2\le r <\frac 52$
\begin{multline}\label{tangest}
\l^{\frac 12}\|\psi v_\l\|_{L^2_{\l^{-1}}H^{r,\l}(\R^n)}+
\l^{-\frac 12}\|(i\partial_t+\Delta_\g)(\psi v_\l)\|_{L^2_{\l^{-1}}H^{r-2,\l}(\R^n)}
\\
\lesssim
\l^{\frac 12}\|v_\l\|_{L^2_{\l^{-1}}L^2(\Omega)}+
\l^{-\frac 12}\|(i\partial_t+\Delta_\g)v_\l\|_{L^2_{\l^{-1}}L^2(\Omega)}\,.
\end{multline}
This bound also holds if we replace $\Delta_\g$ on the left side by the
divergence form operator $\partial_i\g^{ij}\partial_j$, since
the difference $\rho^{-1}(\partial_i\rho)\g^{ij}\partial_j$ maps
$H^{r,\l}\rightarrow H^{r-2,\l}$ with norm $\l$, provided
$r\in [2,\frac 52)$. Since subsequent
estimates will be only in terms of the left hand side of \eqref{tangest},
we may thus set $\rho \equiv 1$, and replace $\Delta_\g$ by $\sum_{ij}\partial_i\,\g^{ij}\partial_j$.

We next reduce matters to considering solutions that are 
strictly frequency localized on $\R^n$, and which satisfy
an equation with frequency localized coefficients.
For each $\mu$, we form regularized coefficients $\g^{ij}_\mu$ 
by truncating the 
Fourier transform of the $\g^{ij}$ so that
\begin{equation}\label{trunc}
\supp(\widehat{\g^{ij}_\mu}) \subset \{|\xi| \leq c \mu\}\,,
\end{equation}
for some small constant $c$. We observe the following estimates
\begin{equation}\label{regest}
\|\g^{ij}_\mu - \g^{ij}\|_{L^\infty(\R^n)} \lesssim \mu^{-1}, \qquad\quad
\|\prtl_x^\alpha \g^{ij}_\mu\|_{L^\infty(\R^n)} \lesssim \mu^{|\alpha|-1}\,,
\quad|\alpha|\ge 1\,.
\end{equation}
With slight abuse of notation we now set
$$
\Delta_{\g_\mu} v = \sum_{1 \leq i,j \leq n}\prtl_i \left(
\,\g^{ij}_\mu \, \prtl_j v\, \right)\,.
$$
We will prove in the next section
the following estimate for $u_\mu(t,x)$ defined
on $[0,\mu^{-1}]\times\R^n$, which are localized to spatial frequencies
$\approx \mu$.

\begin{lemma}\label{coniclemma}
Suppose that $(p,q,s)$ are as in Theorem \ref{extthm},
and $\widehat u_\mu(t,\xi)$ is supported in the region
$\frac 12\mu\le |\xi|\le \frac 52\mu$. Then
\begin{equation}\label{conicest1}
\|u_\mu\|_{L^p_{\mu^{-1}}L^q}\lesssim 
\mu^{s+\hf}\|u_\mu\|_{L^2_{\mu^{-1}}L^2}+
\mu^s\|(i\partial_t+\Delta_{\g_\mu})u_\mu\|_{L^1_{\mu^{-1}}L^2}\,.
\end{equation}
Furthermore, if $\widehat u_\mu(t,\xi)$ is in addition localized to 
$|\xi'|\le \frac 32 |\xi_n|$, then \eqref{conicest1}
holds for $p>2$ and $q<\infty$ satisfying \eqref{scalingcond} with $s\ge 0$; 
that is, without the restriction \eqref{pqcondn}.
\end{lemma}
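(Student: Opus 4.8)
### Proof plan for Lemma~\ref{coniclemma}

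The plan is to prove the second, conic, statement first, since it is essentially a variable-coefficient wave-packet dispersive estimate, and then derive the first statement by summing dyadic angular sectors. For the conic case, I would begin with a Duhamel reduction: writing $u_\mu(t) = \exp(it\Delta_{\g_\mu})u_\mu(0) + \int_0^t \exp(i(t-\tau)\Delta_{\g_\mu})(i\partial_\tau + \Delta_{\g_\mu})u_\mu(\tau)\,d\tau$ and invoking the Minkowski inequality on the $L^1_{\mu^{-1}}L^2$ term, it suffices to prove the homogeneous estimate $\|\exp(it\Delta_{\g_\mu})g\|_{L^p_{\mu^{-1}}L^q}\lesssim \mu^{s+\frac12}\|g\|_{L^2}$ for $\widehat g$ supported in $\frac12\mu\le|\xi|\le\frac52\mu$ with $|\xi'|\le\frac32|\xi_n|$. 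After rescaling to the semiclassical frame $x\mapsto \mu x$, $t\mapsto \mu^2 t$ (so the time interval becomes $[0,\mu]$ and the operator becomes semiclassically quantized with $h=\mu^{-1}$), the estimate to prove is a fixed-time-scale $L^p_1 L^q$ bound with the $\mu^{s+1/2}$ factor accounting exactly for $\frac2p+\frac nq=\frac n2-s$ and the unit-length time interval.

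The heart of the matter is the parametrix. Following \cite{SmSoBdry} and \cite{bdrystz}, I would decompose $g$ in phase space into wave packets adapted to the angular localization $|\xi'|\le\frac32|\xi_n|$: these are tubes of dimension (in rescaled coordinates) roughly $\mu^{1/2}$ in the $x'$-directions and unit length in $x_n$ along the null direction, with the mollified metric $\g_\mu$ being essentially constant on the scale of each packet because of \eqref{regest}. For each packet one solves the equation approximately by flowing along the bicharacteristic of $\Delta_{\g_\mu}$; the conic condition guarantees the flow stays transverse enough that the packet does not spread anomalously and the standard free-space dispersive decay $t^{-n/2}$ (hence the Keel--Tao machinery, \cite{keeltao98}) applies to each packet, giving the full range $p>2$, $\frac2p+\frac nq=\frac n2$ with $s=0$. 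Summing the packets with almost-orthogonality in $L^2$ and using the $TT^*$ / Keel--Tao argument yields \eqref{conicest1} in the conic regime without \eqref{pqcondn}.

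For the general case, I would split $\widehat u_\mu$ into $O(1)$ pieces: one piece with $|\xi'|\le\frac32|\xi_n|$, handled by the conic estimate just proved, and finitely many pieces with $|\xi'|\gtrsim|\xi_n|$, i.e., with $\xi$ in a cone about some direction in the $\xi'$-hyperplane. For the latter, the bicharacteristic flow is (nearly) tangent to the boundary $x_n=0$, which is precisely where grazing/gliding phenomena obstruct full dispersion; here one does not control the packet by free dispersion but instead uses the fixed-time-slice Sobolev/square-function bound together with the restricted Strichartz exponent range \eqref{pqcondn}. Concretely, on each $\mu^{-1}$-time subinterval the non-conic packets satisfy a fixed-time $L^q$ estimate with loss $\mu^{1/q}$ (a Sobolev embedding in the tangential variables), and interpolating this with the trivial $L^2$ bound, under the constraint \eqref{pqcondn}, recovers the stated power $\mu^{s+1/2}$ in $L^p_{\mu^{-1}}L^q$. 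The main obstacle is the non-conic regime: making the wave-packet bookkeeping near the boundary precise — controlling how the packets reflect off $x_n=0$ after the even/odd extension, and verifying that \eqref{pqcondn} is exactly the condition under which the degraded fixed-time bound, summed over $O(\mu)$-many unit time steps in the rescaled frame, still closes — is where the real work lies, and it is deferred to the parametrix construction of the next section.
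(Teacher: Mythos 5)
Your overall architecture --- Duhamel reduction to the homogeneous flow, rescaling to a unit time interval, wave packets adapted to the mollified metric, and full dispersive decay plus a $TT^*$ argument for the conic piece $|\xi'|\lesssim|\xi_n|$ --- matches the paper's route in spirit (the paper routes this through the first-order operator $P_\mu$ and Theorem \ref{dyadicthm}, then Theorem \ref{coniclocthm}, but that reformulation is not the issue). The genuine gap is in your treatment of the non-conic regime, which is the entire content of the restriction \eqref{pqcondn}. First, the decomposition there is not into $O(1)$ pieces: the proof takes a dyadic decomposition in $|\xi_n|/|\xi|\approx\theta_j=2^{-j}$ into $\sim\log\mu$ angular sectors, each with coefficients regularized at its own scale $\l^{1/2}\theta_j^{-1/2}$ and each controlled only on time slabs of length $\veps\theta_j$; reassembling the $\theta_j^{-1}$ slabs and the sectors requires the nested-slab/branching argument and the Koch--Tataru $V^2$ spaces, and \eqref{pqcondn} is exactly the condition $\sigma(q)\ge\frac1p$ that makes the slab gain $\theta_j^{\sigma(q)}$ in \eqref{angest} summable.

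Second, and more seriously, the mechanism you propose for the tangential pieces --- a fixed-time $L^q$ bound by Sobolev embedding in the tangential variables, interpolated with $L^2$ and summed over time steps --- cannot close. Any fixed-time $L^2\to L^q$ bound, integrated over $[0,\mu^{-1}]$, yields at best $\mu^{-\frac1p}\mu^{n(\frac12-\frac1q)}\|u_\mu\|_{L^\infty L^2}=\mu^{s+\frac1p}\|u_\mu\|_{L^\infty L^2}$, which misses the scale-invariant target $\mu^{s+\frac12}\|u_\mu\|_{L^2_{\mu^{-1}}L^2}\approx\mu^{s}\|u_\mu\|_{L^\infty L^2}$ by a factor of $\mu^{1/p}$; no choice of $(p,q)$ in \eqref{pqcondn} repairs this. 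The actual proof does not abandon dispersion in the tangential regime: it establishes the degraded dispersive bound \eqref{dispersive}, in which the $n-1$ tangential directions still contribute the full $|t-r|^{-\frac{n-1}2}$ decay while the normal direction contributes only for $|t-r|\ge\mu^{-1}\theta^{-2}$, and \eqref{pqcondn} is precisely the condition under which the tangential decay alone suffices for the Hardy--Littlewood--Sobolev step. Relatedly, there is no reflection bookkeeping to do at this stage --- the problem has already been reduced to a boundaryless equation on $\R^n$ with a Lipschitz metric $\g(x',|x_n|)$ --- and the real technical burden is the inhomogeneous error term $G$ produced by the rough normal dependence of the coefficients, which is controlled through the weights $\langle\mu^{1/2}x_n\rangle$ in \eqref{angest} rather than by anything resembling a fixed-time Sobolev estimate. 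Without the degraded dispersive estimate and this bookkeeping, the sketch does not produce \eqref{conicest1}.
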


In the remainder of this section we reduce 
\eqref{squarelest}, and hence
Theorems \ref{extthm} and \ref{intthm}, to establishing Lemma \ref{coniclemma}.

We start by considering the frequency components $\mu\le\l$
of $\psi v_\l$ (by which we understand its odd/even extension to $\R^n$). 
Let $\beta_\mu(D)$
denote a Littlewood-Paley localization operator on $\R^n$
to frequencies $\approx\mu$,
and consider $u_\mu=\beta_\mu(D)(\psi v_\l)$.
For $\mu\le\l$, \eqref{conicest1} and the Schwarz inequality imply
\begin{equation}\label{lowfreqest}
\|u_\mu\|_{L^p_{\l^{-1}}L^q}\lesssim 
\mu^s\Bigl(\,\l^\hf\|u_\mu\|_{L^2_{\l^{-1}}L^2}+
\l^{-\hf}\|(i\partial_t+\Delta_{\g_\mu})u_\mu\|_{L^2_{\l^{-1}}L^2}\Bigr)\,.
\end{equation}
Since $s\ge 0$ in our bounds, we may sum over dyadic values of
$\mu\le\l$ to establish \eqref{squarelest} for the cutoff of $\psi v_\l$
to frequencies $\le\l$, provided we bound the $\ell^2$ norm over $\mu$ of the terms in parentheses
in \eqref{lowfreqest} by
the terms in parentheses in \eqref{squarelest}.
By \eqref{tangest}, this is a special case
of the following estimate, which we establish for all $r\in [2,\frac 52)$,
\begin{multline}\label{umuest}
\biggl(\sum_\mu \;\l\|u_\mu\|^2_{L^2_{\l^{-1}}H^{r,\l}}+
\l^{-1}\|(i\partial_t+\Delta_{\g_\mu})u_\mu\|^2_{L^2_{\l^{-1}}H^{r-2,\l}}\biggl)^\hf\\
\lesssim
\l^{\frac 12}\|\psi v_\l\|_{L^2_{\l^{-1}}H^{r,\l}}+
\l^{-\frac 12}\|(i\partial_t+\Delta_\g)(\psi v_\l)\|_{L^2_{\l^{-1}}H^{r-2,\l}}\,.
\end{multline}
Since $\beta_\mu$ is $L^2$ bounded and commutes with differentiation, 
this will follow from showing the fixed time estimate
\begin{equation}\label{umuest'}
\biggl(\sum_\mu\;\|(\beta_\mu\Delta_\g-\Delta_{\g_\mu}\beta_\mu)(\psi v_\l)
\|^2_{H^{r-2,\l}}\biggr)^\hf\lesssim
\l\,\|\psi v_\l\|_{H^{r-1,\l}}\,.
\end{equation}
In this estimate we may replace $\Delta_\g=\partial_i\g^{ij}\partial_j$
by $\g^{ij}\partial_i\partial_j$, and similarly for $\Delta_{\g_\mu}$.
This follows since the difference $(\partial_i\g^{ij})\partial_j$
maps $H^{r-1,\l}\rightarrow H^{r-2,\l}$ with norm $\l$.
By the Coifman-Meyer commutator estimate (see~\cite[Prop 4.1D]{Tay}), for $\sigma\in[0,r-2]$,
$$
\biggl(\sum_\mu\|\,[\beta_\mu,\g]\partial_x^2(\psi v_\l)\|^2_{H^\sigma}\biggr)^\hf
\lesssim \|\psi v_\l\|_{H^{\sigma+1}}
\le \l^{\sigma+1}\,\|\psi v_\l\|_{H^{r-1,\l}}\,.
$$
The same holds with $[\beta_\mu,\Delta_\g]$ replaced by 
$[\partial_{x'},[\beta_\mu,\Delta_\g]]$, since this has the effect of
differentiating the coefficients $\g^{ij}$ in $x'$, which remain Lipschitz. 
Hence
$$
\biggl(\sum_\mu\|[\beta_\mu,\g]\partial_x^2(\psi v_\l)\|^2_{H^{r-2,\l}}\biggr)^\hf
\lesssim \l\,\|\psi v_\l\|_{H^{r-1,\l}}\,.
$$
Next, using \eqref{regest} and interpolation, we obtain for $0\le\sigma\le 1$,
$$
\biggl(\sum_\mu\|(\g-\g_\mu)\partial_x^2\beta_\mu(\psi v_\l)\|^2_{H^\sigma}\biggr)^\hf\lesssim
\|\partial_x(\psi v_\l)\|_{H^\sigma}
\le \l^{\sigma+1}\,\|\psi v_\l\|_{H^{\sigma+1,\l}}\,.
$$
Commuting with $\partial_{x'}$ as above yields
$$
\biggl(\sum_\mu\|(\g-\g_\mu)\partial_x^2\beta_\mu(\psi v_\l)\|^2_{H^{r-2,\l}}\biggr)^\hf\lesssim
\l\,\|\psi v_\l\|_{H^{r-1,\l}}\,,
$$
completing the proof of \eqref{umuest'}.

To handle frequencies $\mu>\l$, we consider separately the tangential and
normal components of $u_\mu$. Thus, we decompose
$$
\beta_\mu(\xi)=\Gamma_\mu(\xi)+\Gamma'_\mu(\xi)\,,
$$
where
$$
\text{supp}(\Gamma_\mu)\subset\{\xi\,:\,|\xi'|\le \tfrac 32|\xi_n|\}\,,
\qquad\quad
\text{supp}(\Gamma'_\mu)\subset\{\xi\,:\,|\xi'|\ge |\xi_n|\}\,.
$$

First consider a tangential component $u_\mu=\Gamma'_\mu(\psi v_\l)$.
The key idea is that $u_\mu$ and $\Delta_{\g_\mu}u_\mu$ are
supported where $|\xi'|\approx \mu$,
whereas $\partial_{x'}$ weighs as $\lambda$ acting on $u_\mu$.
Consequently, for each fixed time,
$$
\|u_\mu\|_{L^2}\lesssim \Bigl(1+\frac \mu\l\Bigr)^{-N}\|u_\mu\|_{H^{0,\l}}\,,
$$
and similarly for $(i\partial_t+\Delta_{\g_\mu})u_\mu$.
Noting that the proof of \eqref{umuest} works the same with
with $\beta_\mu$ replaced by $\Gamma'_\mu$, 
then \eqref{conicest1}, \eqref{tangest}, and \eqref{umuest} together yield
$$
\|u_\mu\|_{L^p_{\mu^{-1}}L^q}\lesssim
\Bigl(\frac \mu \l\Bigr)^{-2}\l^s
\Bigl(\l^\hf\|v_\l\|_{L^2_{\l^{-1}}L^2(\Omega)}+
\l^{-\hf}\|(i\partial_t+\Delta_\g)v_\l
\|_{L^2_{\l^{-1}}L^2(\Omega)}\Bigr)\,.
$$
We sum this over the $\mu/\l$ disjoint intervals of length $\mu^{-1}$ contained
in $[0,\l^{-1}]$, and over dyadic values $\mu\ge \l$, to complete the proof of
\eqref{squarelest} for $\psi v_\l$ localized to
frequencies $|\xi'|\ge|\xi_n|$.

For a normal component $u_\mu=\Gamma_\mu(\psi v_\l)$, we do not
have sufficient decay in powers of $\mu/\l$ to apply the above
steps for large $s$. Instead, we use the fact that \eqref{conicest1}
holds for all $s\ge 0$ in this case, and deduce large $s$ results
from the case $s=0$ together with Sobolev embedding. Given a triple
$(p,q,s)$ satisfying \eqref{scalingcond}, 
let $q_0\in [2,\frac ns)$ be such that
$$
\frac 1{q_0}-\frac 1 q=\frac sn\,,
$$
hence $(p,q_0,0)$ satisfies \eqref{scalingcond}.
Then \eqref{conicest1} implies
\begin{equation*}
\|u_\mu\|_{L^p_{\mu^{-1}} L^{q_0}}
\lesssim
\mu^{\hf}\|u_\mu\|_{L^2_{\mu^{-1}} L^2}+
\|(i\partial_t+\Delta_{\g_\mu})u_\mu\|_{L^1_{\mu^{-1}} L^2}
\,.
\end{equation*}
Summing over intervals yields, for $r\in [2,\frac 52)$, and $\mu>\l$,
\begin{align*}
\|u_\mu\|_{L^p_{\l^{-1}} L^{q_0}}
&\lesssim
\frac \mu{\l^\hf}\,\|u_\mu\|_{L^2_{\l^{-1}} L^2}+
\|(i\partial_t+\Delta_{\g_\mu})u_\mu\|_{L^1_{\l^{-1}} L^2}\\
{}
&\lesssim
\Bigl(\frac \mu \l\Bigr)^{2-r}
\Bigl(\l^\hf\|u_\mu\|_{L^2_{\l^{-1}} H^{r,\l}}+
\|(i\partial_t+\Delta_{\g_\mu})u_\mu\|_{L^1_{\l^{-1}} H^{r-2,\l}}\Bigr)\,.
\end{align*}
We apply this inequality to $\l^{-1}\partial_{x'}u_\mu$, and
observe that
$$
\|[\l^{-1}\partial_{x'},\Delta_{\g_\mu}]u_\mu\|_{L^1_{\l^{-1}}H^{r-2,\l}}
\lesssim
\l\,\|u_\mu\|_{L^1_{\l^{-1}}H^{r,\l}}
\lesssim
\l^{\frac 12}\|u_\mu\|_{L^2_{\l^{-1}}H^{r,\l}}\,.
$$
This holds since multiplication by the tangential derivative
$\partial_{x'}\g_\mu$
preserves $H^{r-1,\l}$, provided $r<\frac 52$.
Repeated application yields
\begin{equation*}
\|(\l^{-1}\partial_{x'})^\alpha u_\mu\|_{L^p_{\l^{-1}} L^{q_0}}
\lesssim
\Bigl(\frac \mu \l\Bigr)^{2-r}
\Bigl(\l^\hf\|u_\mu\|_{L^2_{\l^{-1}} H^{r,\l}}+
\|(i\partial_t+\Delta_{\g_\mu})u_\mu\|_{L^1_{\l^{-1}} H^{r-2,\l}}\Bigr)\,.
\end{equation*}
We next apply Sobolev embedding to yield
\begin{align*}
\|u_\mu\|_{L^p_{\l^{-1}} L^q}
&\lesssim
\mu^{\frac sn}\|\,|\partial_{x'}|^{\frac{s(n-1)}n}u_\mu\|_{L^p_{\l^{-1}} L^{q_0}}\\
{}&\lesssim
\mu^{\frac sn}\l^{\frac{s(n-1)}n}\sup_{|\alpha|\le n}
\|(\l^{-1}\partial_{x'})^\alpha u_\mu\|_{L^p_{\l^{-1}} L^{q_0}}\\
{}&\lesssim
\l^s\Bigl(\frac \mu \l\Bigr)^{2+\frac sn-r}
\Bigl(\l^\hf\|u_\mu\|_{L^2_{\l^{-1}} H^{r,\l}}+
\|(i\partial_t+\Delta_{\g_\mu})u_\mu\|_{L^1_{\l^{-1}} H^{r-2,\l}}\Bigr)\,.
\end{align*}
We choose $r\in (2+\frac sn,\frac 52)$, and apply \eqref{umuest}.
We then sum over dyadic values of $\mu>\l$ to establish
\eqref{squarelest} for $\psi v_\l$ localized to
frequencies $|\xi'|\le \frac 32|\xi_n|$.

In proving Lemma \ref{coniclemma} using the results of \cite{SmSoBdry},
it is convenient to work as in that paper with a first order equation.
To do so, we start by rescaling the time interval of length
$\mu^{-1}$ in \eqref{conicest1} to an interval of length 1,
by considering the function $v(t,x)=u_\mu(\mu^{-1}t,x)$.
This replaces $\Delta_{\g_\mu}$ by 
$\mu^{-1}\sum\partial_i(\g^{ij}_\mu\partial_j v)$, which is a first
order symbol for $|\xi|\approx\mu$. 

We can modify this operator away from the region $|\xi|\approx\mu$
without changing the estimate \eqref{conicest1}. To fit into the framework of
\cite{SmSoBdry}, we want to work with an operator such that solutions
to the homogeneous flow remain frequency localized to 
$|\xi|\approx\mu$ if the initial data is supported there.
For $\beta_\mu(\xi)$ a Littlewood-Paley
cutoff to frequencies $\approx\mu$, we thus set
$$
P_\mu(x,D)v =
\mu^{-1}\beta_\mu(D)\partial_i\bigl(\g^{ij}_\mu\partial_j\beta_\mu(D)v\bigr)
+\mu^{-1}(1-\beta_\mu(D)^2)\Delta v\,.
$$
Then $P_\mu$ is an elliptic self-adjoint operator on $\R^n$, with a symbol $p_\mu(x,\xi)$ 
such that
$$
\|\partial^2_{\xi_i\xi_j}p_\mu(x,\xi)-\mu^{-1}I\|\le c_0\ll 1\,.
$$

We will prove in the next sections the following result. Here we replace
the parameter $\mu$ by $\l$, and $v$ by $u$, 
to follow the notation of \cite[Theorem 2.2]{SmSoBdry}.

\begin{theorem}\label{dyadicthm}
Let $\g^{ij}_\l$ be obtained by truncating $\g^{ij}(x',|x_n|)$
to frequencies $\le c\l$, and define $P_\l$ as above.
Suppose that $u_\l(t,x)$ is localized to spatial frequencies
$|\xi|\in[\frac 12 \l,\frac 52 \l]$, and $p,q,s$ satisfy
the conditions of Theorem \ref{extthm}.
Then for small $\veps$ the following holds
\begin{equation}\label{semiclass1}
\|u_\l\|_{L^p_\veps L^q(\R^n)}\lesssim
\l^{s+\frac 1p}\Bigl(\,\|u_\l\|_{L^\infty_\veps L^2(\R^n)}+
\|(i\partial_t+P_\l)u_\l\|_{L^1_\veps L^2(\R^n)}\Bigr)\,.
\end{equation}
If in addition $\hat{u}_\l$ is localized to $|\xi_n|\ge\frac 1{20} |\xi|$, then 
\eqref{semiclass1} holds if $s\ge 0,$ $p>2,$ $q<\infty,$
and $\frac 2p+\frac nq=\frac n2-s.$
\end{theorem}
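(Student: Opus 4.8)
The plan is to build a wave‑packet parametrix for the semiclassical Schr\"odinger propagator $e^{itP_\l}$ adapted to the boundary geometry, following the angular localization scheme of \cite{SmSoBdry} and the dispersive estimates of \cite{bdrystz} for the wave equation, and to convert these into the mixed‑norm bound \eqref{semiclass1} by a $TT^*$ argument. By the Duhamel formula and Minkowski's inequality it suffices to treat the homogeneous solution $u_\l=e^{itP_\l}f$ with $\|f\|_{L^2}=1$ and $\hat f$ supported in $|\xi|\in[\tfrac12\l,\tfrac52\l]$; the inhomogeneous term $(i\partial_t+P_\l)u_\l$ is absorbed through the $L^1_\veps L^2$ norm on the right. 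After conjugating by the rescaling $t\mapsto\l t$ we are reduced to a unit‑time, unit‑frequency problem for a second‑order operator whose Hessian in $\xi$ is within $c_0$ of the identity, so the geometry is a small Lipschitz perturbation of the flat one.

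The first main step is the angular decomposition. Partition the frequency annulus $|\xi|\approx\l$ into $\approx\l^{(n-1)/2}$ sectors of angular width $\l^{-1/2}$, localized by cutoffs $\chi_\nu(D)$, and correspondingly decompose $f=\sum_\nu f_\nu$. On each sector one straightens the generalized bicharacteristic flow for $P_\l$ (including reflections off $\{x_n=0\}$, using the even extension of $\g$ and the odd/even extension of the solution) and constructs a wave‑packet parametrix: $e^{itP_\l}f_\nu$ is represented as a superposition of curved tubes of dimensions $\l^{-1/2}\times\cdots\times\l^{-1/2}\times\l^{-1}$ (in the non‑trapping coordinates adapted to $\nu$) traveling along the reflected rays. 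This is exactly the construction of \cite{SmSoBdry,bdrystz}; the only change for the Schr\"odinger equation is that the dispersion relation is $|\xi|^2$ rather than $|\xi|$, which after the time rescaling affects the fixed‑time dispersive decay: on the relevant unit time scale the half‑wave packet decay of \cite{bdrystz} gives a fixed‑time estimate $\|e^{itP_\l}f_\nu\|_{L^\infty}\lesssim \l^{n/2}(1+\l|t|)^{-(n-1)/2}\|f_\nu\|_{L^1}$ away from caustic times, and this is the input one feeds into the abstract Keel–Tao machinery sector by sector.

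The second main step is to assemble the sectoral estimates. For each $\nu$, the dispersive bound plus the $L^2$ bound and the Keel–Tao argument (in the admissible range, which for the wave‑packet scaling on manifolds with boundary is exactly the restricted range \eqref{pqcondn}) yield
\[
\|\chi_\nu(D)\,u_\l\|_{L^p_\veps L^q}\lesssim \l^{s+\frac1p}\bigl(\|f_\nu\|_{L^2}+\|(i\partial_t+P_\l)\chi_\nu(D)u_\l\|_{L^1_\veps L^2}\bigr).
\]
One then sums over $\nu$ using almost orthogonality: since $p,q\ge2$, the $L^p_tL^q_x$ norm of $\sum_\nu\chi_\nu(D)u_\l$ is controlled by $\bigl(\sum_\nu\|\chi_\nu(D)u_\l\|_{L^p_tL^q_x}^2\bigr)^{1/2}$ (a square‑function estimate, using that the $\chi_\nu$ have finite overlap), and the $\ell^2$ sum of the right‑hand sides is $\lesssim\|f\|_{L^2}$ by Plancherel. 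This recovers \eqref{conicest1} and hence \eqref{semiclass1} in the restricted range. For the second assertion, when $\hat u_\l$ is supported where $|\xi_n|\gtrsim|\xi|$ — i.e.\ the rays hit the boundary transversally and only finitely often on the unit time scale — the reflected parametrix has no grazing or gliding rays, the full fixed‑time dispersive decay holds without the angular sacrifice, and one gets the unrestricted Keel–Tao range $\tfrac2p+\tfrac nq=\tfrac n2-s$, $p>2$, $q<\infty$, $s\ge0$; this is the transverse case handled directly in \cite{bdrystz}.

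I expect the main obstacle to be the caustic/reflection analysis underlying the fixed‑time dispersive estimate: controlling the wave packets as they reflect off $\{x_n=0\}$ with only Lipschitz metric coefficients, and absorbing the loss at the finitely many caustic times that arise on the unit time interval. This is precisely where the restriction \eqref{pqcondn} enters — it is the price for summing the $\l^{-1/2}$‑angular pieces when the full transverse dispersion is not available — and where one must invoke, rather than reprove, the parametrix bounds of \cite{SmSoBdry} and \cite{bdrystz}. The bookkeeping of the $\l^{1/p}$ loss, which comes from the unit (rather than global) time scale on which the coordinate representation is valid, is routine once the sectoral Strichartz bounds are in hand.
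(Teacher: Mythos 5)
Your high-level strategy (angular localization, wave packets, a dispersive bound fed into a $TT^*$ argument) is the right family of ideas, but the specific architecture you describe diverges from what is actually needed, and the divergences are not cosmetic. First, the decomposition: the proof does not split the annulus into $\approx\l^{(n-1)/2}$ sectors of width $\l^{-1/2}$. It performs a dyadic decomposition \emph{in $\xi_n$ alone}, producing only $O(\log\l)$ pieces $u_j$ localized to angle $\theta_j=2^{-j}$ from the boundary, terminating at the tangential scale $\theta=\l^{-1/3}$ (with coefficients smoothed at the angle-dependent scale $\l^{1/2}\theta_j^{-1/2}$); the $\mu^{1/2}t^{-1/2}$-scale partition of frequency space appears only later, inside the kernel estimate for $W_rW_t^*$. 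Second, and more seriously, your sectoral estimate is asserted rather than derived: saying that Keel--Tao applies ``in the admissible range, which \ldots is exactly \eqref{pqcondn}'' begs the question, since a near-tangential sector over the full time interval $[0,\veps]$ is exactly as hard as the original problem. The actual mechanism is that each $u_j$ is estimated only on time slabs of length $\veps\theta_j$, with a crucial gain $\theta_j^{\sigma(q)}$ in the constant (estimate \eqref{angest}), and the recombination over the $\theta_j^{-1}$ slabs and the $\log\l$ angles is done via the nested-slab flux estimate \eqref{fluxest} and the branching argument of \cite[p.~118]{SmSoBdry}. This entire layer --- slabs, the $\theta^{\sigma(q)}$ gain, the square-summation over nested slabs, and the error terms weighted by $\langle\l^{1/2}\theta_j^{-1/2}x_n\rangle^{\pm}$ which force the Koch--Tataru $V^2$ machinery for the inhomogeneous part --- is absent from your proposal, and without the $\theta^{\sigma(q)}$ factor the sums do not close.

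Relatedly, your fixed-time dispersive bound is mis-stated in a way that hides the source of the restriction \eqref{pqcondn}. The correct estimate is anisotropic:
$$
\|W_rW_t^*\|_{L^1\to L^\infty}\lesssim \mu^{\frac n2}(\mu^{-1}+|t-r|)^{-\frac{n-1}2}\,(\mu^{-1}\theta^{-2}+|t-r|)^{-\frac12},
$$
i.e.\ dispersion in the normal direction only begins after time $\mu^{-1}\theta^{-2}$. It is precisely this delayed factor that, after interpolation and Hardy--Littlewood--Sobolev (not Keel--Tao), is traded for the gain $\theta^{2\sigma(q)}$ under the condition $\sigma(q)\ge\frac1p$, which is \eqref{pqcondn}; when $\theta\approx1$ (the case $|\xi_n|\gtrsim|\xi|$) the full $|t-r|^{-n/2}$ decay is available and the restriction disappears, which is the correct explanation of the second assertion of the theorem. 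Your $(1+\l|t|)^{-(n-1)/2}$ carries no $\theta$-dependence and cannot produce either the restriction or the gain. Finally, ``invoke rather than reprove'' the bounds of \cite{SmSoBdry,bdrystz} does not quite work: the symbol $p_\l$ is not homogeneous of degree one, so the conjugation of the equation by the wave packet transform acquires a new zeroth-order term $i\alpha$ (handled by the unimodular factor $e^{-i\psi}$), and the derivative bounds on the Hamiltonian flow must be re-established in the rescaled, inhomogeneous setting --- this is the content of Section 4 of the paper.
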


That this implies Lemma \ref{coniclemma} follows from the fact that
$$
\|u_\l\|_{L^\infty_\veps L^2(\R^n)}\lesssim
\|u_\l\|_{L^2_\veps L^2(\R^n)}+
\|(i\partial_t+P_\l)u_\l\|_{L^1_\veps L^2(\R^n)}\,,
$$
which follows from self-adjointness of $P_\l$.
We further note that, by the Duhamel principle, it suffices to prove
the estimate for the case
that $(i\partial_t+P_\l)u_\l=0$. In particular, it would
suffice to prove \eqref{semiclass1} with $L^1_\veps$ replaced by
$L^2_\veps$ on the right hand side, as is the case
in \cite[Theorem 2.2]{SmSoBdry}.

%%%%%%%%%%%%%%%%%%%%%%%%%%%%%%%%%%%%%%%%%%%%%%%%%%%

\newsection{Angular Localization}

\noindent
We now proceed as in~\cite[\S 3]{SmSoBdry}, and decompose $u_\l$
in the frequency domain into terms localized to angular sectors.
This is done
by taking a finite dyadic decomposition in the $\xi_n$ variable,
where $\xi=(\xi',\xi_n)$.
Precisely, we write $u_\l = \sum_{j=1}^{N_\l} u_j$, 
where $N_\l = \frac 13 \log_2 \l\,,$ and where for $1< j < N_\l$,
\begin{equation}\label{nontgtloc}
\supp\left(\widehat{u}_j(t,\xi)\right) \subset
\bigl\{\,|\xi'| \in [\tfrac 14\l,4\l]\,, \;|\xi_n| \in
[2^{-j-2}\l,2^{-j+1}\l] \, \bigr\}\,.
\end{equation}
The ``tangential'' term is $u_{N_\l}$, where
\begin{equation}\label{tgtloc}
\supp\left(\widehat{u}_{N_\l}(t,\xi)\right) \subset
\bigl\{\,|\xi'| \in [\tfrac 14\l,4\l]\,, \;|\xi_n| \le
\l^{\frac 23} \bigr\}\,,
\end{equation}
and $u_1$ is localized away from tangential co-directions
\begin{equation}\label{angleoneloc}
\supp\left(\widehat{u}_1(t,\xi)\right) \subset
\bigl\{\,|\xi| \in [\tfrac 12 \l,\tfrac 52\l]\,, 
\;|\xi_n| \geq \tfrac 18\l \bigr\}\,.
\end{equation}
The term $u_1$ can be handled by the same methods as $u_2$, 
so we restrict attention to $2\le j \le N_\l$.

The energy of the solution $u_\l$ travels along bicharacteristic curves
$$
\dot x = d_\xi p_\l(x,\xi)\,, \qquad
\dot \xi = - d_xp_\l(x,\xi)\,.
$$
For curves which satisfy $|\xi(t)|\approx \l$ for $t \in
[0,\veps]$, then $|\dot x|\approx 1$, and $\dot x_n\approx\l^{-1}\xi_n$.  
In addition, if $|\xi_n (0)| \approx 2^{-j}\l$, then we will
have $|\xi_n(t)| \approx 2^{-j}\l$ for $|t| \le \veps 2^{-j}$.
Setting $\theta_j=2^{-j}$, the function $u_j$ is thus localized to 
bicharacteristics which remain at angle $\approx\theta_j$ to the
boundary for times up to $\veps \theta_j$.
As in \cite{SmSoBdry}, we will have good estimates on
the term $u_j$ on slabs of width $\veps 2^{-j}$ in $t$. 

For each $j \ge 2 $, we let $p_j(x,\xi)$ be the
regularization of the symbol $p_\l(x,\xi)$,
obtained by truncating the metric coefficients $\g^{ij}_\l(x)$ to
frequencies $\leq c \l^{\frac 12} \theta_j^{-\frac 12}$.

Let $S_{j,k} $ denote the slab $\{(x,t): t \in [k \veps \theta_j,
(k+1)\veps \theta_j]\}$.  The slab $S= [0,\veps]\times \RR^n$
is then the union of the $S_{j,k}$ as $k$ ranges over the
integers from
$0 $ to $\theta_j^{-1}$. 
Define
\begin{equation*}
\sigma(q)=
\begin{cases}
\frac 23(\frac 12-\frac 1q)\,, & n=2\,,\\
\frac 12 - \frac 1q\,, & n \geq 3 \,.
\end{cases}
\end{equation*}
Note that~\eqref{pqcondn} is equivalent to $\sigma(q)\ge \frac 1p\,.$
As in~\cite{SmSoBdry}, the crucial matter is to now show that if
$u_j$ satisfies the equation
$$
(D_t + P_j)u_j = F_j + G_j\,,
$$
then the following estimates are valid for $2 \leq j < N_\l$
\begin{align}
\|u_j\|_{L^p L^q(S_{j,k})} \lesssim \l^{s+ \frac 1p}
\theta_j^{\sigma(q)}\Big(&\|u_j\|_{L^\infty L^2(S_{j,k})} +
\|F_j\|_{L^1 L^2 (S_{j,k})}\notag\\
&+ \l^{\frac 14} \theta_j^{\frac 14} \|\langle \l^{\frac 12}
\theta_j^{-\frac 12}x_n\rangle^{-1}u_j\|_{ L^2 (S_{j,k})}\label{angest}\\
 &+ \l^{-\frac 14} \theta_j^{-\frac 14} \|\langle \l^{\frac
12} \theta_j^{-\frac 12} x_n\rangle^{2}G_j\|_{ L^2 (S_{j,k})}
\Big)\,.\notag
\end{align}
In case $j=N_\l$, that is $\theta_j = \l^{-\frac 13}$, then
\begin{equation}\label{angesttgt}
\|u_j\|_{L^p L^q(S_{j,k})}\lesssim \l^{s+ \frac 1p}
\theta_j^{\sigma(q)}\Big(\|u_j\|_{L^\infty L^2(S_{j,k})} +
\|F_j+G_j\|_{L^1 L^2 (S_{j,k})}\Big)\,.
\end{equation}

Let $c_{j,k}$ denote the term in parentheses on the right
of~\eqref{angest} (respectively the term in parentheses in~\eqref{angesttgt}
when $j=N_\l$).  
A modification of the arguments in~\cite[\S 6]{SmSoBdry} show
that if $k(j)$ denotes any sequence of values of $k$ for which the
slabs $S_{j,k(j)}$ are nested, that is, $S_{j+1,k(j+1)} \subset
S_{j,k(j)}$, then
\begin{equation}\label{fluxest}
\sum_{j}c_{j,k(j)}^2 \lesssim \|u_\l\|_{L^\infty L^2(S)}^2 +
\|(i\partial_t+P_\l)u_\l\|_{L^1 L^2(S)}^2\,.
\end{equation}
The key modification arises from the fact that the symbol $p_\l(x,\xi)$
is not homogeneous of degree 1 in $\xi$. This changes the form of the
conjugation
of $Q_\mu$ (which is the operator $P_\l$ after a space-time rescaling
by $\theta_j$), by the wave packet transform $T_\mu$, which occurs
on the bottom of \cite[p. 137]{SmSoBdry} and top of \cite[p. 145]{SmSoBdry}.
The new relation is
$$
T_\mu Q_\mu T_\mu^*=D_q+i\alpha+K\,.
$$
Here, $q$ is the symbol $p_j$ rescaled, and $D_q$ the Hamiltonian
field of $q$. The real valued function $\alpha$ is defined in \eqref{alphadef}
below. The operator $D_q+i\alpha$ is simply the conjugation of $D_q$
by the unimodular function $\exp(i\psi(t,x,\xi))$. This conjugation
does not affect the arguments of \cite[\S 6]{SmSoBdry}, since the
only estimates used on $K$ are absolute value bounds \cite[(6.21)]{SmSoBdry},
that follow from the estimates \cite[(6.31)]{SmSoBdry}.
We also note that the estimates in~\cite{SmSoBdry} use 
$(i\partial_t+P_\l)u_\l \in L^2 L^2(S)$, 
but as noted after Theorem \ref{dyadicthm} above this is unimportant.

The estimates of Theorem \ref{dyadicthm}
will then follow from \eqref{angest}-\eqref{angesttgt} and
the branching argument on~\cite[p. 118]{SmSoBdry}.

In the proof of \eqref{angest}-\eqref{angesttgt}, we will
from now on work with a fixed $j$, and will abbreviate $\theta_j=\theta$.
We work with an angle-dependent rescaled $u_j$,
setting
$$
u(t,x) = u_j(\theta_j t, \theta_j x)\,, 
\quad F(t,x) = \th_j F_j(\theta_j t,\theta_j x)\,,\quad
G(t,x) = \th_j G_j(\theta_j t, \theta_j x)\,,
$$
and $q(x,\xi) = \theta_j P_j(\theta_j x, \theta_j^{-1} \xi)$. Set
$\mu = \l \th_j$, so that $q(x,\xi) \approx \mu$ when $|\xi| \approx
\mu$. Additionally, if $|\xi| \approx \mu$, then
$q(x,\xi)$ satisfies the following estimates; see \cite[(4.1)]{SmSoBdry}.
\begin{equation}\label{qest}
|\prtl_x^\beta \prtl_\xi^\alpha q(x,\xi)| \lesssim
\begin{cases}
\mu^{1-|\alpha|}\,, & \text{ if }\, |\beta|=0\,,\\
c_0\,\bigl(\,1+\mu^{(|\beta|-1)/2} \theta_j \langle \mu^{\frac 12} x_n
\rangle^{-N}\bigr)\mu^{1-|\alpha|}\,, & \text{ if }\, |\beta|\geq 1\,.
\end{cases}
\end{equation}

We then have
$$
D_t u - q(x,D) u = F + G\,,
$$
and the frequency localization condition holds
\begin{equation*}
\supp(\widehat{u}(t,\cdot)) \subset 
\begin{cases}
|\xi'| \in [\tfrac 14\mu, 4\mu]\,,\, |\xi_n| \in
[\tfrac 14\mu \th, 2 \mu \th ]\,, & 
\theta  > \mu^{-\frac 12}\,, \\
|\xi'| \in [\tfrac 14\mu, 4\mu]\,,\, |\xi_n| \leq \mu^{\frac 12}\,, & 
\theta  = \mu^{-\frac 12}\,.
\end{cases}
\end{equation*}
After translation in time, the estimates~\eqref{angest} reduce to
showing that, over the slab $S=[0,\veps] \times \RR^n\,,$
\begin{align}
\|u\|_{L^p L^q(S)} \lesssim \mu^{s+ \frac 1p}
\theta^{\sigma(q)}\Big(&\|u\|_{L^\infty L^2(S)} +
\|F\|_{L^1 L^2 (S)}\notag\\
&+ \mu^{\frac 14} \theta^{\frac 12} \|\langle \mu^{\frac 12}x_n\rangle^{-1}u\|_{ L^2 (S)}\label{angrescale}\\
 &+ \mu^{-\frac 14} \theta^{-\frac 12} \|\langle \mu^{\frac 12}x_n\rangle^{2}G\|_{ L^2 (S)}
\Big)\,.\notag
\end{align}
The estimates in~\eqref{angesttgt} reduce to showing that, for
$\theta = \mu^{-\frac 12}\,,$
\begin{equation}\label{tgtrescale}
\|u\|_{L^p L^q(S)}\lesssim \mu^{s+ \frac 1p}
\theta^{\sigma(q)}\Big(\|u\|_{L^\infty L^2(S)} +
\|F+G\|_{L^1 L^2 (S)}\Big)\,.
\end{equation}

To establish the inequalities~\eqref{angrescale} and~\eqref{tgtrescale},
we use a wave packet transform to construct a suitable representation of $u$.  
Define the linear operator $T_\mu$ on Schwartz class functions by
$$
(T_\mu f)(x,\xi) = \mu^{\frac n4} \int e^{-i\langle \xi, y-x \rangle}
g(\mu^{\frac 12} (y-x)) f(y)\;dy\,,
$$
where we fix $g$ a radial Schwartz class function, with $\widehat{g} $ 
supported in a ball of small radius $c$.
Taking $\|g\|_{L^2(\RR^n)} = (2\pi)^{-\frac n2}\,,$ 
it holds that $T^*_\mu T_\mu =I$
and $\|T_\mu f\|_{L^2(\RR^{2n}_{x,\xi})}= \|f\|_{L^2(\RR^n_y)}$.
We set
$$
\tilde{u}(t,x,\xi) = (T_\mu u(t,\cdot))(x,\xi)\,.
$$
By Lemma 4.4 of~\cite{SmSoBdry}, we may write
\begin{multline}\label{uflow}
\bigl(\prtl_t - d_\xi q(x,\xi) \cdot d_x + d_x q(x,\xi) \cdot d_\xi
+iq(x,\xi)-i\xi\cdot d_\xi q(x,\xi)\bigr)\tilde{u}(t,x,\xi)\\
 = \tilde{F}(t,x,\xi) + \tilde{G}(t,x,\xi)\,,
\end{multline}
where, over $\tilde{S} = [0,\veps] \times \RR^{2n}_{x,\xi}$, the quantity
$$
\|\tilde{F}\|_{L^1 L^2(\tilde{S})} + \mu^{-\frac 14} \theta^{-\frac 12}
\|\langle \mu^\hf x_n \rangle^2\tilde{G}\|_{L^2(\tilde{S})}
$$
is bounded by the right hand side of~\eqref{angrescale}
when $\theta > \mu^{-\frac 12}\,,$ and the quantity
$$
\|\tilde{F} + \tilde{G}\|_{L^1 L^2(\tilde{S})}
$$
is bounded by the right hand side of~\eqref{tgtrescale} when
$\theta = \mu^{-\frac 12}$.  The proof of this lemma relies only on the
bounds~\eqref{qest}, and thus applies in our situation.  Also, given the 
compact support of $\widehat{g}$, it can be seen that the $\xi$ support of 
$\tilde{u}$, $\tilde{F}$, $\tilde{G}$ is contained in a set where 
$|\xi'| \approx \mu$  and $\xi_n \approx \theta \mu$
(or $|\xi_n| \lesssim \mu^{\frac 12}$ when $\theta = \mu^{-\frac 12}$).

Let $\Theta_{r,t}(x,\xi)$ denote the canonical transformation on 
$\RR^{2n}_{x,\xi}=
T^*(\RR^n_x)$ generated by the Hamiltonian flow of $q(x,\xi)$.  That is,
$\Theta_{r,t}(x,\xi)$  is the time $r$ solution of
\begin{equation}\label{qham}
\dot x = d_\xi q(x,\xi')\,, \qquad \dot {\xi} = -d_x q(x,\xi)\,,
\end{equation}
with initial conditions $(x(t),\xi(t))=(x,\xi)\,.$ Since $q(x,\xi)$ is 
independent of time, $\Theta_{r,t} = \Theta_{r-t,0}\,.$  Also define
\begin{equation}\label{alphadef}
\alpha(x,\xi)=q(x,\xi)-\xi\cdot d_\xi q(x,\xi)\,,\qquad 
\psi(t,x,\xi) = \int_0^t \alpha(\Theta_{s,t}(x,\xi))\;ds\,.
\end{equation}
It follows by time independence of $q$ that
$\int_r^t\alpha(\Theta_{s,t}(x,\xi))\;ds=\psi(t-r,x,\xi)\,.$

Equation \eqref{uflow} above allows us to write
\begin{multline*}
\tilde{u}(t,x,\xi)= e^{-i\psi(t,x,\xi)} \tilde{u}(0, \Theta_{0,t} (x,\xi))\\ 
+ \int_0^t e^{-i\psi(t-r,x,\xi)} \left(\tilde{F}(r, \Theta_{r,t}
(x,\xi) )+  \tilde{G}(r, \Theta_{r,t}
(x,\xi) ) \right)\;dr\,.
\end{multline*}

In the next section we will establish the following estimates
for solutions to the homogeneous flow equation,
\begin{theorem}\label{coniclocthm}
Suppose $f \in L^2(\RR^{2n}_{x,\xi})$ is supported in a set of the form
\begin{equation}\label{conicloc}
\begin{cases}
|\xi'| \approx \mu\,,\, \;|\xi_n| \approx \mu \theta\,, & 
\theta  > \mu^{-\frac 12}\,, \\
|\xi'| \approx \mu\,,\, \;|\xi_n| \leq \mu^{\frac 12}\,, & 
\theta  = \mu^{-\frac 12}\,.
\end{cases}
\end{equation}
Define $W\! f (t,x)= T_\mu^* 
\bigl[e^{-i\psi(t,\cdot)}(f \circ \Theta_{0,t})\bigr](x)\,.$ 
Then the following estimate holds for $s\ge 0,$ $p>2,$ and $q<\infty$
satisfying \eqref{pqcondn} and \eqref{scalingcond},
\begin{equation}\label{West}
\|W\! f\|_{L^p L^q (S)} \lesssim \mu^{s+\frac 1p} \theta^{\sigma(q)}
\|f\|_{L^2(\RR^{2n})}\,.
\end{equation}
For $f\in L^2(\RR^{2n}_{x,\xi})$ supported where $|\xi'|\le\mu$, $|\xi_n|\approx\mu$, estimate
\eqref{West} holds with $\theta=1$, for $s\ge 0,$ $p>2,$ and $q<\infty$
satisfying \eqref{scalingcond}.
\end{theorem}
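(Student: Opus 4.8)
The plan is to prove the fixed-time dispersive estimate for the operator $T_\mu^*[e^{-i\psi(t,\cdot)}(f\circ\Theta_{0,t})]$ and then upgrade it to the space-time estimate \eqref{West} by the abstract $TT^*$ argument of Keel--Tao, exactly as in \cite[\S 5]{SmSoBdry}. First I would compute the Schwartz kernel of the operator $W(t)W(r)^*$, which sends $L^2(\R^n_y)\to L^2(\R^n_x)$; since $T_\mu^*T_\mu=I$ this kernel is
$$
K_{t,r}(x,y)=\int e^{i\langle\xi,x-z\rangle}\,g(\mu^{\frac12}(x-z))\,
e^{-i(\psi(t,z,\xi)-\psi(r,z,\xi))}\,\overline{(\cdots)}\;dz\,d\xi\,,
$$
evaluated after composing the two canonical transformations. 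The key geometric point is that $\Theta_{0,t}$ followed by $\Theta_{r,0}=\Theta_{0,-r}$ is just the Hamiltonian flow $\Theta_{r,t}$ of $q$, which by \eqref{qest} is a small Lipschitz perturbation of the free flow $(x,\xi)\mapsto(x+\mu^{-1}(t-r)\xi,\xi)$ restricted to $|\xi|\approx\mu$. Thus the phase is, up to controlled lower-order error, the free Schr\"odinger phase rescaled to frequency $\mu$, and a stationary phase computation in $(z,\xi)$ yields the dispersive bound
$$
\|W(t)W(r)^*\|_{L^{q'}\to L^q}\lesssim
\mu^{n(1-\frac2q)}\,\bigl(\mu\,\theta^{-2}|t-r|\bigr)^{-\frac n2(1-\frac2q)}
$$
on the relevant time scale (with the extra angular factor $\theta$ entering because the $\xi_n$-support has width $\mu\theta$ rather than $\mu$). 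This is precisely the content of \cite[Lemma 5.1 and \S5]{SmSoBdry}; the only thing to check is that the perturbation of the phase coming from $\alpha$ and from the inhomogeneity of $q$ is a symbol that does not spoil the stationary phase estimate, which follows from \eqref{qest} just as there.

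Next I would feed this fixed-time bound, together with the energy estimate $\|W(t)\|_{L^2\to L^2}\lesssim 1$ (immediate since $T_\mu^*$ is bounded, $\Theta_{0,t}$ is measure preserving, and $e^{-i\psi}$ is unimodular), into the Keel--Tao machinery. Because the decay rate is $|t-r|^{-n/2(1-2/q)}$ on a time interval of length $\approx\theta$ — after rescaling, on $[0,\veps]$ the effective decay parameter is $\mu\theta^{-2}$ — the admissibility pairs and the resulting power of $\mu$ match \eqref{scalingcond}: summing/integrating gives $\|Wf\|_{L^pL^q}\lesssim \mu^{s+\frac1p}\theta^{\sigma(q)}\|f\|_{L^2}$ for $p,q$ with $\frac2p+\frac nq=\frac n2-s$ and $\frac1p\le\sigma(q)$, the latter being exactly condition \eqref{pqcondn}. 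In the non-tangential case $|\xi_n|\approx\mu$ one has $\theta\approx 1$ and the full range of admissible $(p,q)$ with $\frac2p\le\frac n2(1-\frac2q)$ is recovered without \eqref{pqcondn}; this is the second assertion of the theorem, and it is where the flow is genuinely non-degenerate in all $n$ directions so the classical Schr\"odinger dispersion applies directly after rescaling to unit frequency.

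The main obstacle I anticipate is not the $TT^*$ bookkeeping but verifying the dispersive estimate uniformly in the angular parameter $\theta$ when $\theta$ is small (the near-tangential regime, $\theta$ down to $\mu^{-1/2}$). There the $x_n$-component of the flow moves at speed $\approx\theta$, so on the natural time scale $|t-r|\lesssim\veps$ the spatial spreading in the $x_n$-direction is only $O(\theta)$ while the tangential spreading is $O(1)$; this anisotropy is exactly why the factor $\theta^{\sigma(q)}$ — rather than $\theta^{1/2-1/q}$ — appears for $n=2$, and one must track the degenerate Hessian in the stationary phase argument carefully, using that on the $\theta$-slab the curvature in the tangential directions still gives full dispersion. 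This is handled in \cite{SmSoBdry} by the slab decomposition already set up above, so the burden here is to check that the inhomogeneity of $q_\mu$ and the conjugating factor $e^{-i\psi}$ (with $\alpha$ as in \eqref{alphadef}) do not degrade these estimates; as noted after \eqref{fluxest}, $e^{-i\psi}$ only conjugates the transport operator and the remaining error $K$ satisfies the same absolute-value bounds \cite[(6.21), (6.31)]{SmSoBdry}, so the argument goes through verbatim.
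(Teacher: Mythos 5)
Your overall strategy --- reduce to a fixed-time dispersive bound for $W_rW_t^*$, combine with the trivial $L^2$ energy estimate, interpolate, and conclude by the non-endpoint $TT^*$/fractional-integration argument --- is exactly the paper's. The gap is in the dispersive estimate itself, which is the entire content of the proof. The bound you display,
$\|W_rW_t^*\|_{L^{q'}\to L^q}\lesssim \mu^{n(1-\frac 2q)}\bigl(\mu\theta^{-2}|t-r|\bigr)^{-\frac n2(1-\frac 2q)}$,
applies the slow ``normal'' time scale $\mu^{-1}\theta^{-2}$ isotropically to all $n$ frequency directions, and it is false for $\theta<1$: at $|t-r|\approx 1$ it asserts a kernel bound $\mu^{n/2}\theta^{n}$, whereas the kernel genuinely has size $\approx\mu^{n/2}$ (already visible for the free flow, since the tangential frequency support has full width $\mu$ and disperses on the time scale $\mu^{-1}$, while only the $\xi_n$-support, of width $\mu\theta$, disperses on the scale $\mu^{-1}\theta^{-2}$). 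The correct estimate is the anisotropic product
$\|W_rW_t^*\|_{L^1\to L^\infty}\lesssim \mu^{\frac n2}(\mu^{-1}+|t-r|)^{-\frac{n-1}2}(\mu^{-1}\theta^{-2}+|t-r|)^{-\frac 12}$,
and it is precisely this two-scale structure (together with $\theta\le 1$ and $\sigma(q)\ge\frac 1p$) that produces the factor $\theta^{2\sigma(q)}$ after interpolation and Hardy--Littlewood--Sobolev. You flag the anisotropy in your last paragraph but your quantitative claim does not incorporate it, so the exponent bookkeeping that follows cannot be checked.

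Relatedly, ``a stationary phase computation in $(z,\xi)$'' is not available here and is not what the paper does: the symbol $q$ comes from coefficients truncated at frequency $c\,\mu^{\frac12}\theta^{-\frac12}$, so higher derivatives of the flow degenerate (e.g.\ $|d_z^2 z_t|\lesssim\langle\mu^{\frac12}t\rangle$), and the kernel must instead be estimated by decomposing the frequency support into cubes of side $\mu^{\frac12}t^{-\frac12}$ and integrating by parts with the vector fields $L'$ (tangential only, for $\mu^{-1}\le|t-r|\le\mu^{-1}\theta^{-2}$) and $L$ (all variables, for larger times), then summing over the lattice using the separation estimate \eqref{xtsep}. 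This two-regime split is exactly the mechanism that yields the anisotropic decay above. Note also that $L$ and $L'$ annihilate the full phase, including the $\psi$-correction, \emph{exactly} (via the identity following \eqref{kformula}), not merely up to a harmless perturbation; and your closing remarks about $V^2_q$ spaces and the error term $K$ concern the reduction of the inhomogeneous estimate \eqref{angrescale} to this theorem, not the proof of the theorem itself.
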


\smallskip

Since $T_\mu^* T_\mu =I\,,$ it follows by the preceeding steps and
variation of parameters that this implies the
estimates~\eqref{tgtrescale}, as well as the estimates~\eqref{angrescale}
in case $\tilde{G} \equiv 0$.
The reduction of the estimates~\eqref{angrescale} to Theorem \ref{coniclocthm}
for $\tilde{G} \neq 0$
requires the $V^2_q$ spaces introduced by Koch and Tataru~\cite{KT}, and
follows exactly the arguments on \cite[p.~124--126]{SmSoBdry}.
The key fact used in that proof about the Hamiltonian flow of $q$ is that
$\dot x_n\approx\theta$ on the support of $\tilde u(t,x,\xi)$,
which holds in our case.

%%%%%%%%%%%%%%%%%%%%%%%%%%%%%%%%%%%%%%%%%%%%%%%%%%%

\newsection{Homogeneous estimates}

\noindent
In this section we prove Theorem \ref{coniclocthm}.  By duality,
it suffices to show that
\begin{equation}\label{dualest}
\|W W^* F \|_{L^p L^q(S)} \lesssim \mu^{2(s + \frac 1p)} \th^{2\sigma(q)} 
\|F\|_{L^{p'} L^{q'} (S)}\,,
\end{equation}
where $\widehat F(t,\xi)$ is supported as in \eqref{conicloc},
and we recall that
$$
s=n\bigl(\tfrac 12-\tfrac 1q\bigr)-\tfrac 2p\,,\qquad
\sigma(q)=
\begin{cases} 
\tfrac 23\bigl(\tfrac 12-\tfrac 1q\bigr)\,,\; &n=2\,,\\
\tfrac 12-\tfrac 1q\,,&n\ge 3\,.
\end{cases}
$$

Let $W_t$ denote the fixed time operator $W_t f = W\!f(r,x)|_{r=t}$.  
We will show that
\begin{equation}\label{dispersive}
\|W_r W_t^* \|_{L^1 \to L^\infty} \lesssim  \mu^{\frac n2} 
(\mu^{-1} + |t-r|)^{-\frac{n-1}{2}}(\mu^{-1} \th^{-2} + |t-r|)^{-\frac 12}\,,
\end{equation}
\begin{equation}\label{energy}
\|W_r W_t^* \|_{L^2 \to L^2} \lesssim 1\,.
\end{equation}
Interpolation of these estimates yields
\begin{equation}\label{qq'est}
\|W_r W_t^* \|_{L^{q'} \to L^q} \lesssim  \mu^{\frac n2 (1-\frac 2q)} 
(\mu^{-1} + |t-r|)^{-\frac{n-1}{2}(1-\frac 2q)}
(\mu^{-1} \th^{-2} + |t-r|)^{-\frac 12(1-\frac 2q)}.
\end{equation}
For $n \geq 3$, we have $\frac 2p \le 1-\frac 2q\le \frac{n-1}2
\bigl(1-\frac 2q\bigr)$, hence
we may ignore the term $|t-r|$ in the last factor to obtain
\begin{equation}\label{decayest}
\|W_r W_t^* \|_{L^{q'} \to L^q} \lesssim 
\mu^{2(s+\frac 1p)}\th^{2\sigma(q)} |t-r|^{-\frac 2p}\,.
\end{equation}
In case $n=2$, 
we use that $\theta\le 1$ to bound
$$
\|W_r W_t^* \|_{L^{q'} \to L^q} \lesssim  
\mu^{\frac 43(1-\frac 2q)} \th^{\frac 23(1-\frac 2q)}
\bigl(\mu^{-1}+|t-r|\bigr)^{-\frac 23(1-\frac 2q)}\,.
$$
Since $\frac 2p\le\frac 23(1-\frac 2q)$, in this case we again have
\eqref{decayest}.
In either case, the Hardy-Littlewood-Sobolev theorem gives~\eqref{dualest}.

In case $\theta=1$, as for the normal piece, estimate \eqref{decayest} follows
for all $p,q$ satisfying \eqref{scalingcond} with $s\ge 0$. Hence 
for the normal piece the condition \eqref{pqcondn} is not necessary.

The inequality \eqref{energy} follows from the fact that $T_\mu$ is an
isometry and $\Theta_{0,t}(x,\xi)$ is a symplectomorphism, and hence
preserves the measure $dx\,d\xi$.  The remainder
of this section is devoted to proving~\eqref{dispersive}.

The action of $W_r W_t^*$ on a function $h(y)$ can be expressed as 
integration against an integral kernel $K(r,x;t,y)$, defined by the formula
$$
\mu^{\frac{n}{2}} \int e^{i\langle \zeta, x-z \rangle - i
\psi(r-t,x,\zeta)
-i \langle \zeta_{t,r},
y-z_{t,r} \rangle} g(\mu^\hf(y-z_{t,r}))g(\mu^\hf(x-z))\beta_\theta(\zeta)
\,dz\, d\zeta\,.
$$
Recall that $\widehat{g}$ is supported in a ball of small radius and 
$f(x,\xi)$ is assumed to have $\xi$ support in a set of the 
form~\eqref{conicloc}, which is essentially preserved by the Hamiltonian
flow of $q$ for time $\veps$.  
Hence $\beta_\th(\zeta)$ can be taken to be a 
smooth cutoff to a set of the form~\eqref{conicloc}.
For convenience, we take $\beta_\theta(\zeta)$ to be a product of
a cutoff in $\zeta'$ and a cutoff in $\zeta_n$.

Since $\Theta_{t,r} = \Theta_{t-r,0}$,  it suffices to consider the case 
$r=0$. We abbreviate $(z_{t,0},\zeta_{t,0})$ by $(z_t,\zeta_t)$, so that
\begin{equation}\label{hamsystem}
\prtl_t z_t(z,\zeta) = d_\zeta q(z,\zeta)\,, 
\qquad \prtl_t \zeta_t(z,\zeta) = -d_z q(z,\zeta)\,, 
\qquad (z_0, \zeta_0) = (z, \zeta)\,.
\end{equation}
The kernel $K(0,x;t,y)$ takes the form
\begin{equation}\label{kformula}
\mu^{\frac n2} \int e^{i \langle \zeta, x-z \rangle + i\psi(t,z_t,\zeta_t) 
- i \langle \zeta_t, y-z_t \rangle} g(\mu^{\frac 12}(y-z_t)) 
g(\mu^{\frac 12}(x-z)) \beta_\theta(\zeta)\,dz\,d\zeta\,.
\end{equation}

\begin{theorem}
Suppose $(z_t(z,\zeta),\zeta_t(z,\zeta))$ are defined by~\eqref{hamsystem} 
and $d_z$, $d_\zeta$ denote the $z$ and $\zeta$ gradient operators.  Then if 
$|\zeta| \approx \mu$, and $\zeta_n \approx \mu \theta$,
or $|\zeta_n| \lesssim \mu^{\frac 12}$ in the case $\theta = \mu^{-\frac 12}$, 
the following bounds hold,
\begin{align}
|d_z z_t -I| \lesssim t\,, & & |d_\zeta z_t|\lesssim \mu^{-1}t\,, 
\label{1stderivs}\\
|d_\zeta \zeta_t -I | \lesssim t\,, & & |d_z \zeta_t | \lesssim \mu\,,
\notag
\end{align}
as well as the more precise estimate
\begin{equation}\label{curv}
\left| d_\zeta z_t - \int_0^t d^2_\zeta q(z_s,\zeta_s)\;ds\right| 
\lesssim \mu^{-1}t^2\,.
\end{equation}
Furthermore, for second order derivatives we have
\begin{align}
|d^2_z z_t|\lesssim \langle \mu^{\frac 12} t \rangle\,, & & |d^2_z \zeta_t| 
\lesssim \mu^{\frac 32}\,,\label{zderivs}\\
|d_z d_\zeta z_t | \lesssim \mu^{-1}t \langle \mu^{\frac 12} t \rangle\,, & & 
|d_z d_\zeta \zeta_t | \lesssim \langle \mu^{\frac 12} t \rangle\,.
\label{mixderivs}
\end{align}
Finally, for $l\geq 2$ we have
\begin{equation}\label{zetaderivs}
\mu^l|d^l_\zeta z_t | + \mu^{l-1} |d^l_\zeta \zeta_t | 
\lesssim t \langle \mu^{\frac 12} t \rangle^{l-1}\,.
\end{equation}
\end{theorem}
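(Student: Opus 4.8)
The plan is to differentiate the Hamiltonian system \eqref{hamsystem} in the parameters $(z,\zeta)$ and run a Gronwall/bootstrap argument, exploiting the precise symbol bounds \eqref{qest}. Write the variational equations: differentiating $\partial_t z_t = d_\zeta q(z_t,\zeta_t)$ and $\partial_t\zeta_t = -d_z q(z_t,\zeta_t)$ with respect to $z$ (or $\zeta$) gives a linear ODE system for the Jacobian matrices $d_z z_t, d_z\zeta_t, d_\zeta z_t, d_\zeta\zeta_t$, with coefficients given by the Hessian block matrix of $q$ evaluated along the trajectory. The crucial structural input from \eqref{qest} is that on the relevant frequency region $d^2_\xi q = O(\mu^{-1})$, the off-diagonal block $d_x d_\xi q = O(1)$ (more precisely $O(c_0)$ with an extra $\langle\mu^{1/2}x_n\rangle^{-N}$ gain), and $d^2_x q = O(\mu^{1/2})$ with the same weight; the small constant $c_0$ in \eqref{qest} is what allows the Gronwall constants to be absorbed over the time interval $t\in[0,\veps]$. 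I would first establish the first-order bounds \eqref{1stderivs}: from the variational equations, $|\partial_t(d_z z_t - I)| \lesssim |d_z\zeta_t|\,\mu^{-1} + c_0|d_z z_t|$ and $|\partial_t d_z\zeta_t| \lesssim \mu^{1/2}|d_z z_t| + c_0|d_z\zeta_t|$, so that $\frac{d}{dt}(|d_z z_t - I| + \mu^{-1/2}|d_z\zeta_t|) \lesssim \mu^{-1/2}|d_z\zeta_t| + c_0(\cdots) + \mu^{1/2}\cdot\mu^{-1/2}\cdot$; closing this gives $|d_z z_t - I|\lesssim t$ and $|d_z\zeta_t|\lesssim\mu^{1/2}t\lesssim\mu$ on $[0,\veps]$. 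The $\zeta$-derivative bounds are analogous after inserting the extra factor $\mu^{-1}$ coming from $d^2_\xi q = O(\mu^{-1})$.

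Next I would prove the refined first-order estimate \eqref{curv}. Since $d_\zeta z_0 = 0$, integrating the variational equation gives $d_\zeta z_t = \int_0^t d^2_\xi q(z_s,\zeta_s)\,d_\zeta\zeta_s\,ds + \int_0^t d_x d_\xi q(z_s,\zeta_s)\,d_\zeta z_s\,ds$; using $d_\zeta\zeta_s = I + O(s)$ from \eqref{1stderivs}, the first integral is $\int_0^t d^2_\xi q(z_s,\zeta_s)\,ds + O(\mu^{-1}t^2)$, and the second is $O(\mu^{-1}t^2)$ because $d_x d_\xi q = O(c_0)$ and $d_\zeta z_s = O(\mu^{-1}s)$. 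This yields \eqref{curv}.

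The main obstacle is the weighted second-order bounds \eqref{zderivs}--\eqref{zetaderivs}, where the factor $\langle\mu^{1/2}t\rangle$ appears. The point is that the worst term, $d^2_x q = O(\mu^{1/2})$, carries the transverse weight $\langle\mu^{1/2}x_n\rangle^{-N}$ from \eqref{qest}, and along the flow $\dot x_n \approx \mu^{-1}\zeta_n \approx \theta$ (in rescaled variables; more carefully one has $x_n(s) = x_n(0) + O(\theta s)$ with $\mu\theta^2 \lesssim$ the relevant scale), so the time-integral of $\mu^{1/2}\langle\mu^{1/2}x_n(s)\rangle^{-N}$ is controlled — it contributes an $O(1)$ amount once the trajectory has crossed the slab $\{|x_n|\lesssim\mu^{-1/2}\}$, but can contribute $O(\mu^{1/2}t)$ while inside it, which is exactly the $\langle\mu^{1/2}t\rangle$ factor. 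Concretely, I would differentiate the first-order variational equations once more, obtaining linear ODEs for the Hessians $d^2_z z_t$, $d^2_z\zeta_t$, etc., whose inhomogeneous terms involve third derivatives of $q$ contracted with products of first-order Jacobians (already controlled by \eqref{1stderivs}) plus second derivatives of $q$ contracted with the second-order Jacobians themselves. The homogeneous part is the same linear flow as before (Gronwall-stable with constant $1+Cc_0$ on $[0,\veps]$), so the bounds follow by Gronwall once the forcing terms are estimated; the forcing for $d^2_z\zeta_t$ is $O(\mu^{3/2})$ directly, integrating to $O(\mu^{3/2})$, while for $d^2_z z_t$ the forcing is $O(\mu^{-1})\cdot O(\mu^{3/2}) = O(\mu^{1/2})$ giving $O(\mu^{1/2}t)\lesssim\langle\mu^{1/2}t\rangle$, and similarly the mixed and higher-$\zeta$ cases follow by tracking powers of $\mu^{-1}$ from each extra $\zeta$-derivative. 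The one subtlety requiring care is that for the weighted estimates one must verify the weight $\langle\mu^{1/2}x_n(s)\rangle^{-N}$ interacts correctly under the change of variables and that its time-integral along the flow is $O(\mu^{-1/2}\theta^{-1})$, which after the rescaling $\mu=\l\theta$ is exactly what produces the stated powers; this is the place where the hypothesis $\dot x_n\approx\theta$ on the support of $\tilde u$ is essential, and it parallels the corresponding estimates in \cite[\S 4]{SmSoBdry} with the only change being the extra dimensional weights from non-homogeneity of $q$.
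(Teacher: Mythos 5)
Your overall strategy is the same as the paper's: differentiate \eqref{hamsystem} to get the variational (Jacobian) system, weight the four blocks by powers of $\mu$ so that the coefficient matrix has time-integrable entries, close with Gronwall, and then treat \eqref{curv} by substituting the first-order bounds back into the integral equation and the higher derivatives inductively with the same linear part plus controlled forcing. You have also correctly identified the one genuinely geometric input, namely that the weight $\langle\mu^{1/2}x_n\rangle^{-N}$ in \eqref{qest} integrates along the flow to $O(\mu^{-1/2}\theta^{-1})$ because $\dot x_n\approx\theta$; this is exactly what underlies the paper's key bound \eqref{keyMest}.

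There is, however, a concrete quantitative error in your first-order step that would make the Gronwall argument fail as written. From \eqref{qest} with $|\beta|=2$, $|\alpha|=0$, the pointwise bound is $|d_x^2 q|\lesssim c_0\bigl(1+\mu^{1/2}\theta\langle\mu^{1/2}x_n\rangle^{-N}\bigr)\mu$, i.e.\ $O(\mu)$ generically and up to $O(\mu^{3/2}\theta)$ in the boundary layer, with time integral $\int_0^t|d_x^2q(z_s,\zeta_s)|\,ds\lesssim\mu$ --- not $O(\mu^{1/2})$ pointwise as you assert. Consequently your Gronwall functional $|d_zz_t-I|+\mu^{-1/2}|d_z\zeta_t|$ does not close: the term $\mu^{-1/2}\int_0^t|d_x^2q|\,|d_zz_s|\,ds$ is of size $\mu^{1/2}$, which is unbounded. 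Your claimed intermediate bound $|d_z\zeta_t|\lesssim\mu^{1/2}t$ is false in general; only $|d_z\zeta_t|\lesssim\mu$ holds, which is what the theorem asserts. The repair is the natural symplectic weighting used in the paper: work with the matrix with blocks $d_zz_t$, $\mu\,d_\zeta z_t$, $\mu^{-1}d_z\zeta_t$, $d_\zeta\zeta_t$, so that the relevant coefficient is $\mu^{-1}d_x^2q$ with time integral $O(1)$ (indeed $O(c_0)$), and Gronwall gives uniform boundedness of all four blocks; integrating the equation once more then yields \eqref{1stderivs}. With this correction the rest of your outline (the derivation of \eqref{curv}, and the inductive treatment of \eqref{zetaderivs} via forcing terms built from already-controlled lower-order Jacobians) goes through as in the paper.
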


\begin{proof}
The proof is a rescaled version of Theorem 5.1 and Corollary 5.2 of
\cite{SmSoBdry}, but for completeness we sketch the details here.

Differentiating Hamilton's equations one obtains
$$
\prtl_t
\begin{bmatrix}
d z_t \\ d \zeta_t
\end{bmatrix}
=M(z_t, \zeta_t)
\begin{bmatrix}
d z_t \\ d \zeta_t
\end{bmatrix},
\qquad M(z,\zeta) =
\begin{bmatrix} \phantom{-}d_z d_\zeta q & \phantom{-}d_\zeta d_\zeta q\\
-d_z d_z q & -d_\zeta d_z q
\end{bmatrix}.
$$
To keep all terms of the same order in $\mu$,
we take the following rescaled equation,
\begin{equation}\label{diffham}
\prtl_t
\begin{bmatrix}
\phantom{\mu^{-1}}d_z z_t & \mu \,d_\zeta z_t \\ \mu^{-1} d_z \zeta_t & 
\phantom{\mu}d_\zeta \zeta_t
\end{bmatrix}
=M_\mu(z_t, \zeta_t)
\begin{bmatrix}
\phantom{\mu^{-1}}d_z z_t & \mu \,d_\zeta z_t \\ \mu^{-1} d_z \zeta_t & 
\phantom{\mu}d_\zeta \zeta_t
\end{bmatrix},
\end{equation}
where
$$
M_\mu (z,\zeta) =
\begin{bmatrix} \phantom{-\mu^{-1}}d_z d_\zeta q & \phantom{-} \mu 
\,d_\zeta d_\zeta q\\
-\mu^{-1} d_z d_z q & \phantom{\mu\,}-d_\zeta d_z q
\end{bmatrix}.
$$
The key estimate on $M_\mu$ is that, for $j+k =2\,,$
\begin{equation}\label{keyMest}
\int_0^t |(d^{j}_z d^k_\zeta q) (z_s, \zeta_s)| \,ds \lesssim
\begin{cases}
\mu^{-1}t\,, & \text{if } k =2\,,\\
t\,, & \text{if } j =k= 1\,,\\
\mu\,, & \text{if } j =2\,.
\end{cases}
\end{equation}
This follows from~\eqref{qest} and the property 
$|(\prtl_t z_t)_n | \approx \theta$ for $t \in [0,\veps]$, when 
$\theta > \mu^{-\frac 12}$.  When $\theta = \mu^{-\frac 12}$,
the estimates \eqref{qest} are uniform over $|\beta|\le 2$, and 
\eqref{keyMest} also follows. Gronwall's lemma now gives that
$$
|d_z z_t| +\mu\, |d_\zeta z_t| + \mu^{-1} |d_z \zeta_t | + |d_\zeta \zeta_t| 
\lesssim 1\,.
$$
Integrating~\eqref{diffham} and using \eqref{keyMest}
yields~\eqref{1stderivs}.  The estimate $|d_\zeta \zeta_t -I | \lesssim t$ can 
then be substituted in the integral equation for $\prtl_\zeta z_t$ to 
give~\eqref{curv}.

To show the higher order estimates~\eqref{zetaderivs}, 
we work with the equation
\begin{equation*}
\prtl_t
\begin{bmatrix}
\mu^l d_\zeta^l z_t \\ \mu^{l-1} d_\zeta^{l} \zeta_t
\end{bmatrix}
=M_\mu(z_t, \zeta_t)
\begin{bmatrix}
\mu^l d_\zeta^l z_t \\ \mu^{l-1} d_\zeta^{l} \zeta_t
\end{bmatrix}
+
\begin{bmatrix}
E_1(t) \\ E_2(t)
\end{bmatrix}.
\end{equation*}
Here $E_1(t)$ is a sum of terms of the form
$$
(\mu^k d^j_z d^{k+1}_\zeta q)(z_t, \zeta_t)
(\mu^{l_1}d_\zeta^{\,l_1} z_t) \dots
 (\mu^{l_j}d_\zeta^{\,l_j} z_t)(\mu^{l_{j+1}-1}d_\zeta^{\,l_{j+1}} \zeta_t)
\dots (\mu^{l_{j+k}-1}d_\zeta^{\,l_{j+k}} \zeta_t).
$$
Similarly, $E_2(t)$ can be written as a sum of such terms, but with
the first factor
replaced by $(\mu^{k-1} d^{j+1}_z d^{k}_\zeta q)(z_t, \zeta_t)$. In either case, 
$l_m < l$ for all $m$ and $l_1 + \cdots l_{j+k}=l$.  
The estimate~\eqref{zetaderivs} now follows by an inductive argument 
which uses the bounds
$$
\int_0^t \mu^{k-1}|(d_z^{j+1} d_\zeta^k q)(z_s, \zeta_s)|\;ds \lesssim
\begin{cases}
t\,, & \text{if } j=0\,,\\
\mu^{\frac{j-1}2}\,, & \text{if } j\geq 1\,.
\end{cases}
$$
Estimates~\eqref{zderivs} and~\eqref{mixderivs} follow similarly; 
see the proof of Theorem 5.1 in \cite{SmSoBdry}.
\end{proof}

We start the proof of \eqref{dispersive} by noting that
absolute bounds on the integrand in~\eqref{kformula} easily yield
$$
|K(0,x;t,y)| \lesssim \mu^n \theta\,,
$$
which gives \eqref{dispersive} for $0 \leq t \leq \mu^{-1}$.
We next consider the cases $\mu^{-1} \leq t \leq \mu^{-1}\theta^{-2}$ and  
$\mu^{-1}\theta^{-2} \leq t \leq \veps$ separately.  In these two cases, 
we will respectively
integrate by parts in \eqref{kformula} with the two vector fields
$$
L'=\frac{1 - i\mu t^{-1} (x-z -d_\zeta \zeta_t \cdot 
(y-z_t) )'\cdot d_{\zeta'} }{1 +  \mu t^{-1} 
\bigl|(x-z -d_\zeta \zeta_t \cdot (y-z_t) )'\bigr|^2}
$$
$$
L=\frac{1 - i\mu t^{-1} (x-z -d_\zeta \zeta_t \cdot(y-z_t) )
\cdot d_\zeta}{1+\mu t^{-1}\bigl|x-z -d_\zeta \zeta_t \cdot (y-z_t)\bigr|^2}
$$

Both $L'$ and $L$ preserve the phase function in~\eqref{kformula}.  
This can be seen by noting that 
$\psi(t,z_t,\zeta_t)=\int_0^t\alpha(s,z_s,\zeta_s)\,ds$, and observing that
$$
\prtl_{\zeta_i}\left(\int_0^{t} q(z_s,\zeta_s) -
\zeta_r\cdot(d_\zeta q)(z_s,\zeta_s)\,ds \right) + \zeta_t
\cdot \prtl_{\zeta_i} z_t=0\,.
$$
The expression vanishes at $t=0$ since $d_\zeta z_0=0$, and Hamilton's
equations
show that the derivative of the expression with respect to $t$ vanishes
identically.

We begin with the case where $ \mu^{-1} \leq t \leq \mu^{-1} \theta^{-2}\,.$ 
Recall that $\beta_\theta(\zeta)$ is the product of smooth cutoffs to
$|\zeta_n|\approx \theta\mu$ and $|\zeta'|\approx\mu$.
Let $\{\xi_m'\}$ be a collection of $\approx(\mu t)^{\frac {n-1}2}$ vectors
on the lattice of spacing $\mu^{\frac 12}t^{-\frac 12}$, and $\phi$
a cutoff so that
$$
\beta_\theta(\zeta) = \sum_m \beta_\th (\zeta) \phi_m(\zeta')\,,
$$
where $\phi_m(\zeta')=\phi(\mu^{-\frac 12}t^{\frac 12}(\zeta'-\xi'_m))\,.$

Define $K_m(t,x,y)$ as the integral in~\eqref{kformula} with 
$\beta_\th(\zeta)$ replaced by $\beta_\th (\zeta) \phi_m(\zeta')$ so that
$K(0,x;t,y) = \sum_m K_m(t,x,y)\,.$

By the estimates~\eqref{1stderivs} 
and \eqref{zetaderivs}, we have that for $k \geq 1$,
$$
|(\mu^{\frac 12} t^{-\frac 12} d_\zeta)^k a| \lesssim 1, \quad
\text{for}\quad a(t,z,\zeta) = 
\mu^{\frac 12} t^{-\frac 12} z_t \;\text{  or  }\; a(t,z,\zeta) =  
t^{-\frac 12} d_\zeta \zeta_t,
$$
which holds not just for $t \in [\mu^{-1}, \mu^{-1}\th^{-2}]$ but for all
$t \in [\mu^{-1}, \veps]$.  Furthermore, 
$|(\mu^{\frac 12} t^{-\frac 12} d_{\zeta'} )^k\phi_m(\zeta')\beta_\th(\zeta)|
\lesssim 1 $, since we do not differentiate in $\zeta_n$.  
Therefore, integration by parts yields the following 
upper bound on $K_m(t,x,y)$,
\begin{multline*}
\mu^{\frac{n}{2}} \int_{\RR^n \times \supp(\beta_\theta\phi_m)}
\bigl(1+\mu^{\hf} t^{-\hf} |(x-z -d_\zeta \zeta_t \cdot (y-z_t))'|
\bigr)^{-N}\\
\times\bigl(1+\mu^\hf|y-z_t|\bigr)^{-N}\bigl(1+\mu^{\hf}|x-z|\bigr)^{-N}
\,dz\, d\zeta\,.
\end{multline*}
We set $\xi_m = (\xi_m', \mu\th)$.  Since $t \leq \mu^{-1} \theta^{-2}\,,$ 
we have for $\zeta \in \supp(\beta_\theta\phi_m)$,
\begin{equation}\label{zetamineq}
|\zeta -\xi_m| \lesssim \mu^{\frac 12} t^{-\frac 12}\,.
\end{equation}
Recall that $z_t=z_t(z,\zeta)$ is the spatial component of
$\Theta_{t,0}(z,\zeta)$. We let $x^m_t=z_t(x,\xi_m)$
denote the spatial component of $\Theta_{t,0}(x,\xi_m)$.
We then claim that, for $\zeta\in \supp(\beta_\theta\phi_m)$,
\begin{equation}\label{xzapprox}
\mu^{\hf}t^{-\hf}
|\,x-z -d_\zeta \zeta_t \cdot (x^m_t-z_t) | \lesssim 1 + \mu\,|x-z|^2 .
\end{equation}
Assuming this for the moment, we dominate the integrand for $K_m$ by
\begin{equation}\label{integrand}
\bigl(1+\mu^{\hf}t^{-\hf}|( d_\zeta \zeta_t \cdot
(y-x^m_t))'|\bigr)^{-N}\bigl(1+\mu^\hf|y-z_t|\bigr)^{-N}
\bigl(1+\mu^{\hf}|x-z|\bigr)^{-N}\,.
\end{equation}
By~\eqref{1stderivs} and \eqref{zetamineq}, 
we have $|x^m_t -z_t| \lesssim \mu^{-\frac 12} t^{\frac 12}+|x-z|$.  
Thus, since $|d_\zeta \zeta_t -I|\lesssim |t|$, we conclude that
\begin{align*}
\mu^\hf t^{-\hf}|( d_\zeta \zeta_t \cdot (y-x^m_t))'| &
\gtrsim \mu^\hf t^{-\hf}|(y-x^m_t)'| - \mu^\hf t^{\hf}|y-x^m_t|\\
 &\gtrsim \mu^\hf t^{-\hf}|(y-x^m_t)'| - 
\mu^\hf t^{\hf}\bigl(|y-z_t|+|x-z|\bigr) - |t|\,.
\end{align*}
The negative terms on the right here are small compared to the
last two terms in~\eqref{integrand}.  
Therefore, we have
$$
|K_m(t,x,y)|\lesssim \mu^{\frac{n+1}2}\theta\, t^{-\frac{n-1}2} 
\bigl(1+\mu^{\frac 12} t^{-\frac 12}|(y-x^m_t)'|\bigr)^{-N},
$$
which follows by observing the rapid decay of the integrand in $z$, and that 
the volume of $\supp(\phi_m\beta_\th)$ is comparable to
$\mu^{\frac{n+1}2}\theta\, t^{-\frac{n-1}2}$.

We next observe that, by~\eqref{curv} and the estimate
$$
\bigl\|d^2_\zeta q(z,\zeta) -2 \mu^{-1} I\bigr\| \lesssim 
\mu^{-1}\|g^{ij}-\delta_{ij}\| 
\lesssim c_0\mu^{-1}\,,
$$
we have that
$$
|(x^m_t - x^l_t) - 2\mu^{-1}t(\xi_m -\xi_l)| 
\ll\mu^{-1}t\, |\xi_m -\xi_l|\,,
$$
and since $|\xi_m-\xi_l|=|\xi_m'-\xi_l'|$, we conclude that
\begin{equation}\label{xtsep}
\mu^{\frac 12} t^{-\hf}|( x^m_t-x^l_t)'| 
\approx  \mu^{-\frac 12} t^{\hf}|\xi_m' -\xi_l'|\,.
\end{equation}
Since the $\xi_m'$ lie on a $\mu^{\frac 12}t^{-\frac 12}$ spaced lattice,
we may sum over $m$ to obtain
$$
|K(0,x;t,y)|\lesssim \mu^{\frac{n+1}2}\theta\, t^{-\frac{n-1}2} 
\sum_m 
\bigl(1+\mu^{\frac 12} t^{-\frac 12}|(y-x^m_t)'|\bigr)^{-N} 
\lesssim \mu^{\frac{n+1}2}\theta\, t^{-\frac{n-1}2}\,,
$$
yielding \eqref{dispersive} for $\mu^{-1}\le t\le \mu^{-1}\theta^{-2}$.

To handle the case $\mu^{-1} \theta^{-2} \leq t \leq \veps\,,$ we modify 
the above proof by considering an $\mathcal{O}(\mu^{\frac{n}2 } t^{\frac n2})$ 
collection of vectors $\{\xi_m\}$ in a
$\mu^{\frac 12}t^{-\frac 12}$ spaced lattice in $\R^n$, 
and an associated partition 
$\phi_m (\zeta) = \phi(\mu^{-\frac 12} t^{\frac 12}(\zeta - \xi_m))$, 
satisfying
$$
\beta_\theta(\zeta) = \sum_m \beta_\th (\zeta) \phi_m(\zeta)\,.
$$
We now define $K_m(t,x,y)$ as the integral in~\eqref{kformula} with 
$\beta_\th(\zeta)$ replaced by $\beta_\th (\zeta) \phi_m(\zeta)$.  
Here, since $\mu^{\hf} t^{-\hf}\le\mu\theta$, we have
$$
|(\mu^{\frac 12} t^{-\frac 12} d_{\zeta} )^k\phi_m(\zeta)\beta_\th(\zeta)|
\lesssim 1\,.
$$
Integrating by parts with respect to the vector field $L$ now shows that 
$K_m(t,x,y)$ is bounded by
\begin{multline*}
\mu^{\frac{n}{2}} \int_{\RR^n \times \supp(\beta_\theta\phi_m)}
\bigl(1+\mu^{\hf} t^{-\hf} |x-z -d_\zeta \zeta_t \cdot (y-z_t) |\bigr)^{-N}\\
\times\bigl(1+\mu^\hf|y-z_t|\bigr)^{-N}\bigl(1+\mu^{\hf}|x-z|\bigr)^{-N}
\,dz\, d\zeta\,.
\end{multline*}
Using~\eqref{xzapprox}, which holds for any $t \in [\mu^{-1}, \veps]\,,$ 
we proceed as before and conclude that
$$
|K_m(t,x,y)|\lesssim \mu^{\frac{n}2}t^{-\frac{n}2} 
\bigl(1+\mu^{\frac 12} t^{-\frac 12}|y-x^m_t|\bigr)^{-N}.
$$
The $n$-dimensional analogue of~\eqref{xtsep} is valid here, 
so we may use the 
spacing of the $\xi_m$ as above to sum over $m$ and obtain
\eqref{dispersive} for $t>\mu^{-1}\theta^{-2}$, that is,
$$
|K(0,x;t,y)|\lesssim \mu^{\frac{n}2} t^{-\frac{n}2}\,.
$$

Returning to~\eqref{xzapprox}, we first observe that by estimating the 
Taylor remainder using~\eqref{zderivs}--\eqref{zetaderivs}
and \eqref{zetamineq}, the following holds
\begin{equation*}
\mu^{\hf}t^{-\hf}|x^m_t - z_t-(d_z z_t)(x-z)- (d_\zeta z_t)(\xi_m-\zeta)|
\lesssim 1 +\mu |x-z|^2\,.
\end{equation*}
Furthermore by~\eqref{1stderivs} and \eqref{zetamineq}, we have
$$
\mu^{\hf}t^{-\hf}|(d_\zeta z_t) (\xi_m -\zeta)| \lesssim 1\,.
$$
From the fact that $(z,\zeta)\rightarrow (z_t,\zeta_t)$ 
is a symplectic transformation, we have
$$
\prtl_{\zeta_i}\zeta_t \cdot \prtl_{z_j} z_t - \prtl_{\zeta_i }
z_t \cdot \prtl_{z_j} \zeta_t=\delta_{ij}\,,
$$
where $\;\cdot\;$ pairs the $z_t$ and $\zeta_t$ indices.  Lastly,
by \eqref{1stderivs},
$$
\mu^{\hf} t^{-\hf}|d_\zeta z_t | \, |d_z \zeta_t| \, |x-z|
\lesssim \mu^{\frac 12} t^{\frac{1}{2}}|x-z| \le \mu^{\hf}|x-z|\,.
$$
These facts now combine to yield~\eqref{xzapprox}.\qed

%%%%%%%%%%%%%%%%%%%%%%%%%%%%%%%%%%%%%%%%%%%%%%%%%%%

\newsection{Applications to semilinear Schr\"{o}dinger equations on exterior domains}\label{wpextdom}

\noindent
In this section,
we assume that $\Omega = \RR^3\setminus \mathcal{K}$ is the domain exterior 
to a smooth non-trapping obstacle $\mathcal{K}$ (or any exterior domain 
where~\eqref{locsmooth} holds).  We consider the initial value problem for 
the following family of semilinear Schr\"{o}dinger equations 
in $3+1$ dimensions,
\begin{equation}\label{critnls}
i\prtl_t u + \Delta u \pm |u|^{r-1} u =0\,, \qquad u(0,x) =f(x)\,,
\end{equation}
satisfying homogeneous Dirichlet or Neumann boundary conditions
\begin{equation}\label{nlsbc}
u(t,x)\big|_{\prtl \Omega} =0\,, \qquad \text{ or } \qquad 
\prtl_\nu u(t,x)\big|_{\prtl \Omega} =0\,.
\end{equation}
Precisely, by a solution to \eqref{critnls}-\eqref{nlsbc}, 
we understand that, with $F(u)=\pm |u|^{r-1}u$, and
$u(t)$ denoting the function $u(t,\cdot)$,
\begin{equation}\label{lineqn}
u(t)=e^{it\Delta}f+i\int_0^t e^{i(t-s)\Delta}\,F(u(s))\,ds\,,
\end{equation}
where $\exp(it\Delta)$ is the unitary Schr\"odinger propagator defined
using the Dirichlet or Neumann spectral resolution. Defocusing means that
$F(u)=-|u|^{r-1}u$. Since we will work with $H^1$ data, the boundary conditions
required of the initial data in the Dirichlet case are that $f$ vanish on 
$\partial\Omega$;
in the Neumann case the boundary conditions are void, i.e.~$f$ is the 
restriction to $\Omega$ of a general function in $H^1(\R^3)$.

Planchon and Vega showed in~\cite{planvega} that, for $1<r<5$ and
defocusing nonlinearities, one has global existence of solutions to the 
Dirichlet problem for $f \in H^1$.  A crucial ingredient in their 
proof was the estimate in Theorem~\ref{extthm} with $p=q=4$ and $s = \frac 14$.
They combined this with local smoothing estimates near the boundary
to obtain well-posedness of
solutions for time $T>0$ depending on $\|f\|_{H^1}$.  Conservation of 
energy and mass
\begin{align*}
\int_\Omega \frac 12 |d_x u(t)|^2  + \frac 1{r+1} |u(t)|^{r+1} \;dx & =
\text{constant in }t\\
\int_\Omega |u(t)|^2 \;dx & =
\text{constant in }t
\end{align*}
can then be used to establish global existence of solutions.  Given that 
Theorem~\ref{extthm} holds for either Dirichlet or Neumann boundary 
conditions, we remark that our estimates can be used in the same way 
to obtain global existence of the solution to the Neumann problem.

In the critical case $r=5$, local well-posedness for 
solutions with $f \in H^1$, and global well-posedness for small data,
was proved by Ivanovici in~\cite{ivanovici}, under
the assumption that $\mathcal{K}$ is strictly convex, and $u$ satisfies
Dirichlet conditions.
These assumptions are necessary for the Melrose-Taylor parametrix construction
used to establish Strichartz estimates in~\cite{ivanovici}.
The Strichartz estimates were in fact shown to hold
in this setting for the full range
of $p,q$ satisfying \eqref{scalingcond}, provided $p>2$.
Recently, Ivanovici and Planchon in \cite{IP} extended the well-posedness
results to the case of general non-trapping $\mathcal{K}$, 
for both Dirichlet and Neumann
conditions, using certain $L^5_xL^2_t$ estimates from \cite{SmSoBdry}.

Here, we use our estimates to give a simple proof of the well-posedness
results for $H^1$ data for the critical case $r=5$, and general non-trapping
$\mathcal K$. The proof proceeds by a contraction 
argument using a $L^4_tL^\infty_x$ Strichartz estimate. As in~\cite{IP},
the local results are valid for the Dirichlet and Neumann cases, and the
proof yields scattering for small data in the Dirichlet case.
Precisely, we establish the following.

\begin{theorem}\label{critnlsthm}
Suppose $f \in H^1(\Omega)$, where $f|_{\partial\Omega}=0$ if Dirichlet
conditions are assumed. Then there exists $T>0\,,$ 
and a solution $u(t,x)$ to equation \eqref{lineqn}
with $r=5$ on $[-T,T]\times\Omega$, satisfying
$$
u \in X\equiv C([-T,T];H^1(\Omega)) 
\cap L^4([-T,T];L^\infty(\Omega))\,,
$$
and the solution is
uniquely determined in this function space.  
Furthermore, if the data satisfies
$\|f\|_{H^1}\le \veps$ for $\veps$ sufficiently small, one can take $T=\infty$
in the case of Dirichlet conditions, and $T=1$
for Neumann conditions.
\end{theorem}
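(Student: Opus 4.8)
The plan is to prove Theorem~\ref{critnlsthm} by a fixed-point argument in the space $X$, the key analytic input being an $L^4_tL^\infty_x$ Strichartz estimate for the linear propagator on $\Omega$. First I would record this linear estimate: since $(p,q)=(4,\infty)$ is excluded from Theorem~\ref{extthm} (which requires $q<\infty$), one obtains it by interpolating the estimate \eqref{omegastz} between two admissible pairs with $q$ large but finite on either side of the scaling line. Concretely, for $n=3$ and $q<\infty$ large, the condition \eqref{pqcondn}, i.e. $\tfrac1p+\tfrac1q\le\tfrac12$, together with \eqref{scalingcond}, is satisfied by pairs near $(4,\infty)$; interpolating two such estimates gives
\begin{equation*}
\|e^{it\Delta}f\|_{L^4([-T,T];L^\infty(\Omega))}\lesssim \|f\|_{H^{1}(\Omega)}\,,
\end{equation*}
with $T=\infty$ in the Dirichlet case. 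One also has, trivially from the spectral theorem, the energy bound $\|e^{it\Delta}f\|_{C_tH^1}=\|f\|_{H^1}$, and by the $H^1$ Sobolev embedding $H^1(\Omega)\hookrightarrow L^6(\Omega)$ in three dimensions, control of $\|e^{it\Delta}f\|_{L^\infty_tL^6_x}$ as well. Commuting $\nabla$ past $e^{it\Delta}$ is legitimate when acting on functions in the domain of the relevant power of the Laplacian, so one gets the same bounds for $\|\nabla e^{it\Delta}f\|$; for the Dirichlet problem $\nabla$ does not preserve the boundary condition, so one should instead phrase everything in terms of the inhomogeneous estimate below applied to $\nabla$ only where it is justified, or work with the equivalent norm $\|(1-\Delta)^{1/2}\cdot\|_{L^2}$.

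Next I would set up the nonlinear map. Define $\Phi(u)(t)=e^{it\Delta}f+i\int_0^t e^{i(t-s)\Delta}F(u(s))\,ds$ with $F(u)=\pm|u|^4u$, $r=5$. Using the Christ--Kiselev lemma to upgrade the linear Strichartz estimate to the retarded (Duhamel) form, one has
\begin{equation*}
\Bigl\|\int_0^t e^{i(t-s)\Delta}G(s)\,ds\Bigr\|_{X}\lesssim \|G\|_{L^1([-T,T];H^1(\Omega))} + \|G\|_{L^{4/3}([-T,T];W^{1,6/5}(\Omega))}\,,
\end{equation*}
the second term coming from the dual of the $(4,\infty)$ endpoint via the $(4,3)$-type admissible pair (more precisely one uses the dual Strichartz pair $(p',q')$ with $(p,q)$ close to $(4,\infty)$ and $q$ finite, then controls the $W^{1,q'}$ norm). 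The point of the critical exponent is the algebra: $\nabla F(u)\sim |u|^4\nabla u$, so by Hölder
\begin{equation*}
\|F(u)\|_{L^{4/3}_TW^{1,6/5}}\lesssim \|u\|_{L^4_TL^\infty_x}^{4}\,\|u\|_{L^\infty_TW^{1,6/5}\cap L^\infty_TH^1}\,,
\end{equation*}
where the spatial Hölder pairing is $\tfrac{6}{5}=\tfrac{1}{\infty}+\cdots+\tfrac{1}{6}$ with four $L^\infty$ factors times one $L^6$ factor (for the undifferentiated part one uses $L^6$ with the $H^1$ embedding). One then sees that $\Phi$ maps the ball $\{u\in X:\|u\|_X\le 2C\|f\|_{H^1}\}$ into itself provided $\|u\|_{L^4_TL^\infty_x}$ is small, which for the local statement is arranged by taking $T$ small (so that $\|e^{it\Delta}f\|_{L^4_TL^\infty_x}$ is small by dominated convergence, and the Duhamel term is then absorbed), and for the small-data global statement by taking $\|f\|_{H^1}\le\veps$ (so the first term is already $\le C\veps$ and $T=\infty$ is allowed in the Dirichlet case, $T=1$ in the Neumann case where $T=\infty$ is not available in \eqref{omegastz}). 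A parallel Hölder estimate with $F(u)-F(v)=O((|u|^4+|v|^4)|u-v|)$ gives the contraction bound, again with a small factor $\|u\|_{L^4_TL^\infty_x}^4+\|v\|_{L^4_TL^\infty_x}^4$. This yields existence and uniqueness in the ball; uniqueness in all of $X$ follows by a standard continuity/bootstrap argument partitioning $[-T,T]$ into subintervals on which the $L^4_tL^\infty_x$ norm of any two solutions is small.

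The main obstacle, and the place I would spend the most care, is the interplay between the differentiated norm, the boundary conditions, and the exact choice of Strichartz pairs. Because $q=\infty$ is not directly available, one must verify that interpolation and the Christ--Kiselev argument genuinely produce the needed inhomogeneous estimate with a $W^{1,6/5}_x$ (or comparable) source space rather than merely an $L^2$-based source; this is where $n=3$ and the specific range \eqref{pqcondn} are used, and one should check that the pair dual to the one being interpolated still satisfies the scaling relation with $s$ adjusted so that the Duhamel term lands back in $C_tH^1$. For the Dirichlet problem there is the additional subtlety that $|u|^4u$ and its gradient must be shown to lie in the correct intrinsic Sobolev space compatible with the boundary condition; since $r=5$ is odd the nonlinearity is smooth enough and $F(u)|_{\partial\Omega}=0$ when $u|_{\partial\Omega}=0$, so $F(u)(t)\in H^1_0$ and the Duhamel integral stays in $C_tH^1(\Omega)$ as defined via the spectral resolution — but this compatibility should be stated explicitly. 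Finally, for the Neumann case one notes that \eqref{locsmooth}, hence \eqref{omegastz} and the derived $L^4_tL^\infty_x$ bound, is only asserted for finite $T$, which is exactly why the small-data global statement is restricted to $T=1$ (equivalently any fixed finite time) there, while scattering as $t\to\pm\infty$ is claimed only in the Dirichlet case where the $T=\infty$ estimate holds and the standard argument (Cauchy criterion on $e^{-it\Delta}u(t)$ in $H^1$) applies.
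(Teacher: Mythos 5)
Your overall architecture matches the paper's: a contraction in $X=C_tH^1\cap L^4_tL^\infty_x$ driven by an $L^4_tL^\infty_x$ Strichartz bound for the linear flow, with the critical algebra $\||u|^4u\|_{H^1}\lesssim\|u\|_{L^\infty}^4\|u\|_{H^1}$ and smallness of $\|e^{it\Delta}f\|_{L^4_TL^\infty}$ obtained either from small $T$ or small data. However, there is a genuine gap at the single most substantive step: the derivation of the linear estimate $\|e^{it\Delta}f\|_{L^4_TL^\infty}\lesssim\|f\|_{H^1}$. You assert it follows ``by interpolating \eqref{omegastz} between two admissible pairs with $q$ large but finite near $(4,\infty)$.'' Interpolation between two estimates of the form $H^1\to L^{p_i}_tL^{q_i}_x$ with $q_i<\infty$ at fixed regularity only produces intermediate finite exponents; it never reaches $q=\infty$. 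To get $L^\infty_x$ one must spend Sobolev regularity, i.e.\ combine estimates at \emph{different} regularities $s$, and one must do so in a way that avoids fractional $L^p$-Sobolev spaces on a domain with boundary. The paper does this by taking a spectral Littlewood--Paley decomposition $f=\sum f_j$, using heat-kernel ultracontractivity to bound $\|u_j(t)\|_{L^\infty}\lesssim 2^{j/3}\|e^{-itH}f_j\|_{L^9}$, invoking the two cases $(p,q,s)=(3,9,\tfrac12)$ and $(12,9,1)$ of Theorem \ref{extthm}, and then summing the dyadic pieces with the elementary two-parameter interpolation Lemma \ref{gagnir} followed by H\"older in $t$ with exponents $(\tfrac98,9)$. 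You correctly flag that the passage to $q=\infty$ is ``where one must spend the most care,'' but you do not supply the mechanism, and the mechanism you name does not work as stated.

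Secondary issues: your treatment of the Duhamel term via Christ--Kiselev and a dual pair with source space $L^{4/3}_tW^{1,6/5}_x$ is both unnecessary and incorrect as written. The H\"older pairing you give, four $L^\infty$ factors and one $L^6$ factor, yields exponent $\tfrac16$, not $\tfrac65$, and on an unbounded exterior domain $L^2\not\subset L^{6/5}$, so $\||u|^4\nabla u\|_{L^{6/5}}$ is not controlled this way. The paper avoids all of this by using the inhomogeneous estimate with the simple source $\|F\|_{L^1_TH^1}$ (which follows from the homogeneous bound by Duhamel/Minkowski, no Christ--Kiselev needed) together with $\||u|^4u\|_{L^1_TH^1}\lesssim\|u\|_{L^4_TL^\infty}^4\|u\|_{L^\infty_TH^1}$. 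Your remarks on the boundary compatibility of $|u|^4u$ in the Dirichlet case, and on why the Neumann case is restricted to finite $T$ while scattering is available only for Dirichlet, are consistent with the paper.
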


The key ingredient in Theorem~\ref{critnlsthm} will be the following Strichartz 
estimate for $u$ given by formula \eqref{lineqn}, and with $f$ and 
$F$ satisfying the given boundary condition,
\begin{equation}\label{l4stz}
\|u\|_{L^4_T L^\infty}  
\lesssim \|f\|_{H^1} + \|F\|_{L^1_T H^1}\,.
\end{equation}
Given \eqref{l4stz}, one applies differentiation and H\"older's inequality to obtain
\begin{equation}\label{l4holder}
\bigl\||u|^4 u\,\bigr\|_{L^1_T H^1} 
\lesssim \|u\|_{L^4_T L^\infty}^4 \|u\|_{L^\infty_T H^1}\,,
\end{equation}
noting that $|u|^4u$ satisfies Dirichlet conditions if $u$ does.

We then pose $u=u_0+v$, where $u_0(t)=\exp(it\Delta)f$, and take
$T$ small enough so that $\|u_0\|_{L^4_TL^\infty}<c$, for $c$ to
be determined.
Estimates \eqref{l4stz} and \eqref{l4holder}, together with
conservation of the $H^1$ norm under $\exp(it\Delta)$, show that,
for small $c$, the map
$$
v\rightarrow \int_0^te^{i(t-s)\Delta}\Bigl(|u_0(s)+v(s)|^4(u_0(s)+v(s))\Bigr)\,ds
$$
maps the ball $\|v\|_X\le c$ into itself.
Similar analysis shows that the map is in fact a contraction on 
this ball, for small $c$, yielding a fixed point $v$. 
If $\|f\|_{H^1}\le\veps$, then one can take $T=\infty$
for the Dirichlet case, or $T=1$ for the Neumann case.

For defocusing Neumann, energy and mass conservation
then yield global existence. For small norm Dirichlet
data, the proof implies $|u|^4u\in L^1(\R,H^1(\Omega))$. This yields that
such solutions scatter, in the sense that they asymptotically approach in
the $H^1$ norm a solution to the homogeneous equation.

In establishing \eqref{l4stz}, it suffices by the Duhamel principle to
consider $F=0$. The proof of~\eqref{l4stz} will be
obtained from the following cases of Theorem \ref{extthm},
$$
\|u\|_{L^{12}L^9}\lesssim \|f\|_{H^1}\,,\qquad\quad
\|u\|_{L^3L^9}\lesssim \|f\|_{H^{\frac 12}}\,.
$$
The second estimate could be expressed as controlling the $L^3W^{\frac 12,9}$
norm of $u$ in terms of $\|f\|_{H^1}$, and we would then apply
a fractional Gagliardo-Nirenberg inequality to control $\|u(t)\|_{L^\infty}$
by interpolating $L^9$ and $W^{\frac 12,9}$. We can avoid dealing with
fractional $L^p$ Sobolev spaces on exterior domains, 
however, by carrying out the same steps
more directly. The interpolation we will use is the following.

\begin{lemma}\label{gagnir}
Suppose that $\alpha_1,\alpha_2>0$, and $u=\sum_{j=0}^\infty u_j$, where
$$
\|u_j\|_{L^\infty}\le 
\min\bigl(\,2^{-j\alpha_1}\rho_1\,,\,2^{j\alpha_2}\rho_2\bigr)\,.
$$
Then 
$$
\|u\|_{L^\infty}\le C_{\alpha_1,\alpha_2}
\rho_1^{\frac {\alpha_2}{\alpha_1+\alpha_2}}
\rho_2^{\frac {\alpha_1}{\alpha_1+\alpha_2}}\,.
$$
\end{lemma}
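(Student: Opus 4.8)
The plan is to prove Lemma~\ref{gagnir} by a direct dyadic splitting argument, which is the standard trick for establishing Gagliardo--Nirenberg type interpolation inequalities from frequency-localized bounds. First I would introduce the crossover index: let $j_0$ be chosen (up to rounding to an integer) so that the two competing bounds $2^{-j\alpha_1}\rho_1$ and $2^{j\alpha_2}\rho_2$ are comparable, namely $2^{-j_0\alpha_1}\rho_1 \approx 2^{j_0\alpha_2}\rho_2$, which gives $2^{j_0} \approx (\rho_1/\rho_2)^{1/(\alpha_1+\alpha_2)}$. For $j \le j_0$ I would use the bound $\|u_j\|_{L^\infty} \le 2^{j\alpha_2}\rho_2$, and for $j > j_0$ I would use $\|u_j\|_{L^\infty}\le 2^{-j\alpha_1}\rho_1$.

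The main step is then the triangle inequality followed by summing two geometric series:
\begin{equation*}
\|u\|_{L^\infty} \le \sum_{j=0}^\infty \|u_j\|_{L^\infty}
\le \sum_{j \le j_0} 2^{j\alpha_2}\rho_2 + \sum_{j > j_0} 2^{-j\alpha_1}\rho_1\,.
\end{equation*}
Since $\alpha_1,\alpha_2>0$, both sums converge geometrically and are dominated by their largest terms, so each is bounded by a constant depending only on $\alpha_1,\alpha_2$ times $2^{j_0\alpha_2}\rho_2$ and $2^{-j_0\alpha_1}\rho_1$ respectively. Substituting the value of $2^{j_0}$ into these expressions, one checks that
\begin{equation*}
2^{j_0\alpha_2}\rho_2 \approx \rho_1^{\frac{\alpha_2}{\alpha_1+\alpha_2}}\rho_2^{\frac{\alpha_1}{\alpha_1+\alpha_2}}\,,
\qquad
2^{-j_0\alpha_1}\rho_1 \approx \rho_1^{\frac{\alpha_2}{\alpha_1+\alpha_2}}\rho_2^{\frac{\alpha_1}{\alpha_1+\alpha_2}}\,,
\end{equation*}
which is exactly the claimed bound. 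One should also handle the degenerate edge case where $j_0 < 0$ (i.e.~$\rho_1 \lesssim \rho_2$ so the crossover would occur at a negative index): in that case one simply uses the bound $\|u_j\|_{L^\infty}\le 2^{-j\alpha_1}\rho_1$ for all $j\ge 0$, and the resulting sum $\sum_j 2^{-j\alpha_1}\rho_1 \lesssim \rho_1$ is easily seen to be consistent with the stated inequality since $\rho_1 = \rho_1^{\frac{\alpha_2}{\alpha_1+\alpha_2}}\rho_1^{\frac{\alpha_1}{\alpha_1+\alpha_2}} \le \rho_1^{\frac{\alpha_2}{\alpha_1+\alpha_2}}\rho_2^{\frac{\alpha_1}{\alpha_1+\alpha_2}}$ when $\rho_1\le\rho_2$... wait, that last step needs care, so in fact the cleanest route is to not assume any ordering and just note $2^{j_0}$ as defined by the equality $2^{-j_0\alpha_1}\rho_1 = 2^{j_0\alpha_2}\rho_2$ is a positive real (possibly less than $1$), split the sum at $\lfloor j_0\rfloor$ or $\lceil j_0\rceil$ as appropriate, and allow the constant to absorb the rounding.

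There is essentially no serious obstacle here; the only thing to be careful about is bookkeeping the constant $C_{\alpha_1,\alpha_2}$ through the rounding of $j_0$ to an integer and through the geometric series sums, which contribute factors of $(1-2^{-\alpha_i})^{-1}$. The interpolation exponents $\frac{\alpha_2}{\alpha_1+\alpha_2}$ and $\frac{\alpha_1}{\alpha_1+\alpha_2}$ are forced by dimensional analysis (they are the unique exponents summing to $1$ that make the right-hand side scale correctly under $u_j \mapsto \rho\, u_j$), which serves as a useful consistency check on the computation.
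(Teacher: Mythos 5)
Your proposal is correct and is essentially identical to the paper's proof: split the sum at the crossover index $2^{j}\approx(\rho_1/\rho_2)^{1/(\alpha_1+\alpha_2)}$, use the smaller of the two bounds on each side, and sum the geometric series, which yields the constant $C_{\alpha_1,\alpha_2}=\frac{2^{\alpha_1}}{2^{\alpha_1}-1}+\frac{2^{\alpha_2}}{2^{\alpha_2}-1}$. The edge case you briefly worried about is in fact fine (when $\rho_1\le\rho_2$ one has $\rho_1=\rho_1^{\alpha_2/(\alpha_1+\alpha_2)}\rho_1^{\alpha_1/(\alpha_1+\alpha_2)}\le\rho_1^{\alpha_2/(\alpha_1+\alpha_2)}\rho_2^{\alpha_1/(\alpha_1+\alpha_2)}$), and in any case your cleaner formulation of always summing the smaller bound covers it.
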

\begin{proof}
The proof follows by summing the smaller of the bounds, i.e. separating
the sum depending on whether 
$2^j\ge(\rho_1/\rho_2)^{\frac{1}{\alpha_1+\alpha_2}}$ or not. The bound
applies with
$$
C_{\alpha_1,\alpha_2}=
\frac{2^{\alpha_1}}{2^{\alpha_1}-1}+\frac{2^{\alpha_2}}{2^{\alpha_2}-1}\,.
$$
\end{proof}

We next take a Littlewood-Paley decomposition of the initial data
$$
f=\sum_{j=1}^\infty\beta(2^{-2j}H)f+\beta_0(H)f\,,
$$
where $\beta(s)$ is supported in the interval $s\in[\frac 12,\frac 92]$,
and $1=\beta_0(s)+\sum_{j=0}^\infty\beta(2^{-2j}s)$ for $s\ge 0$.
Here, $H$ denotes $-\Delta$ with either Dirichlet or Neumann
conditions.
Set 
$$
f_j=e^{2^{-2j}H}\beta(2^{-2j}H)f\,,\qquad f_0=e^H\beta_0(H)f\,.
$$
By the spectral localization,
$$
\sum_{j=0}^\infty\, \|f_j\|_{H^1}^2\lesssim \|f\|_{H^1}^2\,,
$$
and we may write $u(t)=\sum_{j=0}^\infty u_j(t)\,,$
where
$$
u_j(t)=e^{-2^{-2j}H}e^{-itH}f_j\,,\qquad 
u_0(t)=e^{-H}e^{-itH}f_0\,.
$$
By the ultracontractivity estimate for $H$ on exterior domains 
(see Theorem 2.4.2 and the ensuing comments in~\cite{Dav}, where $\mu=3$
in our case), we can bound
$$
\|u_j(t)\|_{L^\infty}\lesssim 2^{\frac j3}\|e^{-itH}f_j\|_{L^9}\,.
$$
Together with the case $(p,q,s)=(3,9,\frac 12)$ of Theorem~\ref{extthm}, 
we have
$$
\|2^{-\frac j3}u_j\|_{L^3L^\infty}\lesssim
\|e^{-itH}f_j\|_{L^3 L^9}\lesssim \|f_j\|_{H^{\frac 12}}\le
2^{-\frac j2}\|f_j\|_{H^1}\,,
$$
which we combine with Minkowski's inequality to yield
$$
\biggl(\int\biggl(\,
\sum_{j=0}^\infty \|2^{\frac j6}u_j(t)\|^2_{L^\infty}
\biggr)^{\frac 32}\,dt\biggr)^{\frac 13}
\le
\biggl(\sum_{j=0}^\infty\|2^{\frac j6}u_j\|^2_{L^3L^\infty}\biggr)^{\frac 12}
\lesssim \|f\|^2_{H^1}\,.
$$
In particular,
$$
\sup_j \,2^{\frac j6}\|u_j(t)\|_{L^\infty}\le \rho_1(t)\,,
\qquad \|\rho_1\|_{L^3}\lesssim \|f\|_{H^1}\,.
$$
Similar considerations, using the case $(p,q,s)=(12,9,1)$ of 
Theorem~\ref{extthm}, yield
$$
\sup_j \,2^{-\frac j3}\|u_j(t)\|_{L^\infty}\le \rho_2(t)\,,
\qquad \|\rho_2\|_{L^{12}}\lesssim \|f\|_{H^1}\,.
$$
Lemma \ref{gagnir} now applies to give the bound
$$
\|u(t)\|_{L^\infty}\lesssim \rho_1(t)^{\frac 23}\rho_2(t)^{\frac 13}\,.
$$
Applying H\"older's inequality with the dual indices $(\frac 98,9)$ now yields
$$
\|u\|_{L^4 L^\infty}^4\lesssim\int \rho_1(t)^{\frac 83}\rho_2(t)^{\frac 43}\,dt
\lesssim \|\rho_1\|_{L^3}^{\frac 83}\,\|\rho_2\|_{L^{12}}^{\frac 43}
\lesssim \|f\|_{H^1}^4\,.
$$

%%%%%%%%%%%%%%%%%%%%%%%%%%%%%%%%%%%%%%%%%%%%%%%%%%%

\newsection{Applications to semilinear Schr\"{o}dinger equations on compact manifolds}\label{bddomain}

\noindent
In this section we consider a compact 3-dimensional
Riemannian manifold $\Omega$ with boundary. We assume
$G:[0,\infty)\rightarrow\R$ is bounded below, with $G(0)=0$, and that
\begin{equation}\label{Gcond}
|G'(r)|+r\,|G''(r)|\lesssim \langle r\rangle^{\frac 15}\,.
\end{equation}
We set $F(u)=G'(|u|^2)u$, so that
$$
|F(u)|\le\langle u\rangle^{2/5}|u|\,,\qquad |d_uF(u)|\le \langle u\rangle^{2/5}\,.
$$
We prove existence, uniqueness, and energy conservation, for initial data $u(t_0)\in H^1(\Omega)$,
to the semilinear Schr\"odinger equation
\begin{equation}\label{nls}
i\prtl_t u + \Delta u  = F(u)\,, \qquad u|_{t=t_0} =u(t_0)\,,
\end{equation}
satisfying homogeneous Dirichlet or Neumann boundary conditions \eqref{nlsbc}.
As above, by a solution to \eqref{nls} we understand that its integral form holds,
\begin{equation}\label{nlsintform}
u(t)=e^{i(t-t_0)\Delta}\biggl(u(t_0)-i\int_{t_0}^t e^{-i(s-t_0)\Delta}F(u(s))\,ds\biggr)\,.
\end{equation}
This formulation is seen to be independent
of $t_0$; that is, if $u$ solves \eqref{nlsintform} on an interval for some $t_0$
then it solves the same equation for all $t_0$ in that interval.

The key estimates we use involve values of $(p,q)$ which do not satisfy \eqref{pqcondn}. In this
case, the method of proof yields estimates with a loss of derivatives 
relative to the scale invariant value of $s$ from
\eqref{scalingcond}. In particular, the following analogue of Theorem \ref{semiclassthm} 
loses $\frac 1q$ derivatives
relative to the case of manifolds without boundary considered in \cite{burq1}. Additionally,
there are logarithmic losses due to the endpoint $p=2$ and $q=\infty$.

\begin{lemma}\label{loglosslemma}
Let $n=3$, and suppose that for all $t$, $u_\l(t,\cd)$ is spectrally localized
for $-\Delta_\g$ to the range $[\frac 14 \l^2,4\l^2]$. Then the following estimate holds,
uniformly for $6 \le q \le\infty$, where
$F_\l=(i\partial_t+\Delta_\g)u_\l$,
\begin{equation}\label{l2lqest}
\|u_\l\|_{L^2_{\l^{-1}}L^q(\Omega)}\le C
\l^{\hf-\frac 2q}(\log\l)^2\Bigl(\,\l^{\hf}\|u_\l\|_{L^2_{\l^{-1}} L^2(\Omega)}+
\l^{-\hf}\|F_\l\|_{L^2_{\l^{-1}}L^2(\Omega)}\Bigr)\,.
\end{equation}
\end{lemma}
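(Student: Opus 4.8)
The plan is to derive \eqref{l2lqest} from the semiclassical Strichartz estimate of Theorem~\ref{semiclassthm}, using H\"older's inequality in time to bring $p$ down to $2$ and Bernstein's inequality to push $q$ up to $\infty$, while tracking the constants that degenerate at the two endpoints $p=2$ and $q=\infty$.

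For $6\le q<\infty$ set $p_q=\frac{2q}{q-2}$, so that $\frac1{p_q}+\frac1q=\frac12$; then $p_q\in(2,3\,]$, the pair $(p_q,q)$ lies on the boundary of \eqref{pqcondn}, and \eqref{scalingcond} forces $s_q=\frac12-\frac1q$. Since $u_\l$ is genuinely spectrally localized, Theorem~\ref{semiclassthm} applies and gives
$$
\|u_\l\|_{L^{p_q}_{\l^{-1}}L^q(\Omega)}\lesssim
\l^{\frac 12-\frac 1q}\Bigl(\l^{\hf}\|u_\l\|_{L^2_{\l^{-1}} L^2(\Omega)}+
\l^{-\hf}\|F_\l\|_{L^2_{\l^{-1}} L^2(\Omega)}\Bigr)\,.
$$
As the time interval has length $\l^{-1}$ and $\frac12-\frac1{p_q}=\frac1q$, H\"older's inequality in $t$ costs precisely a factor $\l^{-1/q}$, and one arrives at \eqref{l2lqest} for finite $q$---with no logarithmic loss, but with an implicit constant that depends on $q$. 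What must be checked is that this constant grows only polynomially in $q$ as $q\to\infty$: the proof of Theorem~\ref{semiclassthm} degenerates only at the Hardy--Littlewood--Sobolev step relating \eqref{decayest} to \eqref{dualest} (as $p_q\to 2^+$), with constant $\mathcal{O}\bigl((p_q-2)^{-1}\bigr)=\mathcal{O}(q)$, and at the Sobolev embedding used in Section~2 to pass from the case $s=0$ to large $s$ for the normal high-frequency component (as $s_q\to\frac12$), again with constant $\mathcal{O}(q)$; so the constant in the display is $\mathcal{O}(q^2)$.

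To reach $q=\infty$, apply Bernstein's inequality $\|u_\l(t,\cd)\|_{L^\infty(\Omega)}\lesssim\l^{3/q_0}\|u_\l(t,\cd)\|_{L^{q_0}(\Omega)}$ together with the finite-exponent bound at $q_0$, and take $q_0\approx\log\l$ so that $\l^{3/q_0}=\mathcal{O}(1)$; the residual constant is then $\mathcal{O}(q_0^2)=\mathcal{O}\bigl((\log\l)^2\bigr)$, which yields \eqref{l2lqest} for $q=\infty$. Running the same device with $q_0=\min(q,\lceil\log\l\rceil)$ gives the estimate uniformly over $6\le q\le\infty$ with the single loss $(\log\l)^2$.

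The main obstacle is exactly this last bookkeeping---showing that the constant in Theorem~\ref{semiclassthm} is bounded by a fixed power of the Lebesgue exponent as $p\to 2$, $q\to\infty$---without which the $L^\infty_x$ endpoint cannot be reached. An alternative that sidesteps it is to rerun the argument of Sections~3--4 directly with $p=2$: the reduction of Section~2 goes through verbatim since $s=\frac12-\frac2q\ge 0$ for $q\ge 6$, and in place of the Hardy--Littlewood--Sobolev step of Section~4 one uses Young's inequality in time against the $L^1_t$ norm of the dispersive kernel \eqref{dispersive}---finite for $q<\infty$, logarithmically divergent at $q=\infty$---while the reassembly of the angular pieces over the $N_\l\approx\frac13\log_2\l$ sectors in \eqref{angest}--\eqref{angesttgt} supplies the second logarithm, the branching argument of \cite[p.~118]{SmSoBdry} no longer gaining over a direct summation when $p=2$.
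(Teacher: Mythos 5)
Your main route---running Theorem~\ref{semiclassthm} at the boundary pair $(p_q,q)$ with $\tfrac1{p_q}+\tfrac1q=\tfrac12$, applying H\"older in $t$ to descend to $p=2$, and Bernstein with $q_0\approx\log\l$ to reach $q=\infty$---is an attractive idea and the exponent bookkeeping ($\l^{\frac12-\frac1q}\cdot\l^{-\frac1q}=\l^{\frac12-\frac2q}$) is correct. But the whole argument rests on the unproved assertion that the constant in Theorem~\ref{semiclassthm} is $\mathcal{O}(q^2)$ as $q\to\infty$ along the boundary of \eqref{pqcondn}, and your audit of where that constant degenerates is incomplete. You account for the Hardy--Littlewood--Sobolev step and the Sobolev embedding for the normal piece, but you omit at least two further degenerations: (i) the summation over the $N_\l\approx\frac13\log_2\l$ angular sectors via the branching argument of \cite[p.~118]{SmSoBdry}, which at the critical exponent $\sigma(q)=\tfrac1{p_q}$ is exactly borderline (the geometric sum $\sum_j\theta_j^{\sigma(q)-1/p}$ has no margin), and (ii) the embedding $V^2_q\subset U^{p_q}_q$ used to treat the inhomogeneous term $\tilde G$ in the reduction of \eqref{angrescale} to Theorem~\ref{coniclocthm}, whose constant blows up like $(1-2/p)^{-1}\approx q$ as $p\to2$. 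Until the $q$-dependence of \emph{all} of these is shown to be polynomial, the choice $q_0\approx\log\l$ does not yield $(\log\l)^2$ (and even granting your two $\mathcal{O}(q)$ factors plus the $U/V$ factor, you would already be at $(\log\l)^3$, worse than the stated lemma). These borderline steps are precisely what the paper's proof is built to avoid: it reworks the argument directly at $p=2$, giving up scale invariance by writing $\theta_j^{\frac12-\frac3q}\le\theta_j^{\frac12}\l^{\frac1q}$ so the sector sum converges with a uniform constant (this is the origin of the non-scale-invariant power $\l^{\frac12-\frac2q}$, which your first route instead produces via the H\"older loss), replacing HLS by a Schur/Young bound on $\int_0^1\|W_rW_t^*\|_{L^{q'}\to L^q}\,dt$ (one factor $(\log\mu)^{\frac12}$ after taking square roots), truncating the Koch--Tataru atomic decomposition to $\lesssim\log\mu$ terms to get a usable $U^2_q$ bound (a full $\log\mu$), and using Cauchy--Schwarz over the sectors (another $(\log\l)^{\frac12}$).

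Your ``alternative'' at the end is in fact the paper's actual strategy, but as written it is only a sketch and has its own gaps: you never address the inhomogeneous term $G_j$ in \eqref{angest}, which cannot be absorbed into $L^1L^2$ and forces the $V^2_q/U^2_q$ machinery at $p=2$ (where $V^2_q\not\subset U^2_q$); this is the source of one of the two logarithms and cannot be skipped. You also do not explain why the direct summation over sectors costs only $(\log\l)^{\frac12}$ rather than $\log\l$ (Cauchy--Schwarz against the square-summability of the $c_{j,k(j)}$ from \eqref{fluxest}), nor where the deviation from the scale-invariant power $\l^{\frac12-\frac3q}$ to $\l^{\frac12-\frac2q}$ enters. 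So the proposal identifies the right circle of ideas but, in both routes, leaves unproved exactly the quantitative endpoint issues that constitute the substance of the lemma.
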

\begin{proof}
We start by noting that the reduction of Theorem \ref{semiclassthm} to Theorem \ref{dyadicthm}
holds with uniform constant over $q\ge 6$ with $p=2$. In particular, in the handling of the normal piece, $q_0=6$ for $p=2$, and $s\le \hf$ in our estimates, so the use of Sobolev embedding works 
for that piece. Thus, \eqref{l2lqest} is a consequence of the estimate
$$
\bigl\|u_\l\|_{L^2_\epsilon L^q}
\le C\l^{1-\frac 2q}(\log\l)^2
\bigl(\|u_\l\|_{L^\infty_\epsilon L^2}+\|(i\partial_t+P_\l)u_\l\|_{L^1_\epsilon L^2}\bigr)\,,
$$
together with the following estimate, valid if $\hat{u}_\l$ is 
localized to $|\xi_n|\ge\frac 1{20}|\xi|$,
$$
\bigl\|u_\l\|_{L^2_\epsilon L^q}
\le C\l^{1-\frac 3q}(\log\l)^2
\bigl(\|u_\l\|_{L^\infty_\epsilon L^2}+\|(i\partial_t+P_\l)u_\l\|_{L^1_\epsilon L^2}\bigr)\,,
$$
where $P_\l$ is as in Theorem \ref{dyadicthm}. These estimates in turn follows as a consequence
of the following analogue of \eqref{angest}
\begin{align}
\|u_j\|_{L^2 L^q(S_{j,k})} \le C \l^{1-\frac 3q}(\log\l)^{\frac 32}
\theta_j^{\frac 12-\frac 3q}\Big(&\|u_j\|_{L^\infty L^2(S_{j,k})} +
\|F_j\|_{L^1 L^2 (S_{j,k})}\notag\\
&+ \l^{\frac 14} \theta_j^{\frac 14} \|\langle \l^{\frac 12}
\theta_j^{-\frac 12}x_n\rangle^{-1}u_j\|_{ L^2 (S_{j,k})}\label{newangest}\\
 &+ \l^{-\frac 14} \theta_j^{-\frac 14} \|\langle \l^{\frac
12} \theta_j^{-\frac 12} x_n\rangle^{2}G_j\|_{ L^2 (S_{j,k})}
\Big)\,.\notag
\end{align}
To see this, we note that for $p=2$, the branching argument \cite[p.118]{SmSoBdry} 
requires $\theta_j^\hf$ to converge, and the
remaining term $\theta_j^{-\frac 3q}$ is bounded by $\l^{\frac 1q}$.
The additional loss of $(\log\l)^{\hf}$ here comes from the fact that
there are $\sim\log\l$ terms $j$ in
the decomposition of $u_\l=\sum_j u_j$. We thus have, uniformly in $q$, 
$$
\|u_\l\|_{L^2_\epsilon L^q}\lesssim (\log\l)^\hf
\bigl\|\bigl(\sum_j|u_j|^2\bigr)^\hf\bigr\|_{L^2_\epsilon L^q}\,,
$$
and it is the norm on the right hand side that is controlled by the branching argument.

The estimate \eqref{newangest} is scale invariant; scaling by $\theta$ reduces it to the 
following analogue of \eqref{angrescale}, for angularly localized $u$
satisfying $(D_t-q(x,D))u=F+G$,
\begin{align}
\|u\|_{L^2 L^q(S)} \lesssim \mu^{1-\frac 3q}(\log\mu)^{\frac 32}
\theta^{\hf-\frac 3q}\Big(&\|u\|_{L^\infty L^2(S)} +
\|F\|_{L^1 L^2 (S)}\notag\\
&+ \mu^{\frac 14} \theta^{\frac 12} \|\langle \mu^{\frac 12} x_n\rangle^{-1}u\|_{ L^2 (S)}
\label{newangest'}\\
&+ \mu^{-\frac 14} \theta^{-\frac 12} \|\langle \mu^{\frac 12} x_n\rangle^{2}G\|_{ L^2 (S)}
\Big)\,,\notag
\end{align}
where we used that $\log\mu\approx\log\l$.

The reduction of \eqref{newangest'} to homogeneous estimates, that is, bounds on
the operator $W$ of \eqref {West}, involves a loss of $\log\mu$ due to the fact that $p=2$.
This comes from the
use of the $V^2_q$ spaces introduced by Koch and Tataru~\cite{KT}, where the subscript $q$
refers to the Hamiltonian flow for $q(x,\xi)$.
In case $p=2$,
one needs to control the 2-atomic norm $U^2_q$ of $\tilde u$, whereas 
$V^2_q\subset U^p_q$ only for $p>2$. To proceed, we note that in the atomic
decomposition argument of \cite[Lemma 6.4]{KT}, we may truncate the sum
$u=\sum_n v_n$ to $n\lesssim \log\mu$, since the error is bounded in $L^\infty L^2$ by
$\mu^{-N}$, and its contribution 
thus may be estimated in the desired norm using Sobolev embedding. Each term
$v_n$ is uniformly bounded in $U^2_q$, hence the $U^2_q$ norm of the truncated sum
is $\lesssim\log\mu$.

We are thus reduced to establishing the following analogue of \eqref{West},
\begin{equation}\label{West'}
\|W\! f\|_{L^2 L^q (S)} \le C \mu^{1-\frac 3q} (\log\mu)^\hf\theta^{\hf-\frac 3q}
\|f\|_{L^2(\RR^{2n})}\,.
\end{equation}

To establish \eqref{West'}, we consider $W\!W^*$ as in the proof of Theorem \ref{coniclocthm}.
Taking $n=3$ in \eqref{qq'est}, we 
note the following integral bound for $6\le q\le \infty$, $\mu$ and $\theta$ as above,
$$
\int_0^1(\mu^{-1}+t)^{-(1-\frac 2q)}(\mu^{-1}\theta^{-2}+t)^{-\hf(1-\frac 2q)}\,dt\le
C\mu^{\frac 32(1-\frac 2q)-1}(\log\mu)\theta^{1-\frac 6q}\,,
$$
where $C$ is uniformly bounded. The estimate \eqref{West'} follows by Schur's lemma.
\end{proof}

We use Lemma \ref{loglosslemma}
to deduce the following analogue of Lemma 3.6 of \cite{burq1}. This version is weaker, both in the logarithmic loss and the loss of $\l^{\frac 1q}$, but is sufficient for
our purposes. From now on, we let $u_\l=\beta(\l^{-2}H)u$ denote a Littlewood-Paley decomposition
of $u$, where $\l=2^k$ and $k\ge 1$. The term $k=0$ contains the low frequency terms of
$u$, and the bounds for this term will follow similarly to $k=1$.

\begin{lemma} Let $u$ solve \eqref{nlsintform}. Then there are $C<\infty$ and $\epsilon>0$ such that,
uniformly for $6\le q \le\infty$,
the following holds on any time interval $[0,T]$ with $\l^{-1}\le T\le 1$, 
\begin{equation}\label{lamterm}
\|u_\l\|_{L^2([0,T],L^q)}\le C\l^{-\frac 2q}(\log\l)^2
\Bigl(\|u_\l\|_{L^2([0,T],H^1)}+
\l^{-\epsilon}\bigl\langle\|u\|_{L^\infty([0,T],H^1)}\bigr\rangle^{7/5}\Bigr)\,.
\end{equation}
\end{lemma}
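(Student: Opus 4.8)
\emph{Plan of proof.} The idea is to apply Lemma~\ref{loglosslemma} to the dyadic piece $u_\l=\beta(\l^{-2}H)u$, for which applying $\beta(\l^{-2}H)$ to~\eqref{nlsintform} shows that $u_\l$ solves the Duhamel equation with data $u_\l(t_0)$ and forcing term $F_\l:=(i\partial_t+\Delta_\g)u_\l=\beta(\l^{-2}H)F(u)$, and then to gain the power $\l^{-\epsilon}$ from the fact that $F(u)$ carries a fixed amount of positive Sobolev regularity that the cutoff $\beta(\l^{-2}H)$ converts into decay in $\l$. After translating so that $t_0=0$, I would partition $[0,T]$ into $\lesssim\l T\le\l$ subintervals of length $\l^{-1}$; on each such subinterval $u_\l$ is spectrally localized for $-\Delta_\g$ to an interval comparable to $[\tfrac14\l^2,4\l^2]$ (replacing $\l$ by a comparable value if needed so that the support of $\beta(\l^{-2}\cdot)$ fits), so Lemma~\ref{loglosslemma} applies, uniformly for $6\le q\le\infty$. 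Squaring those estimates and summing over the subintervals — an $\ell^2$ sum since the time norm is $L^2$, so no logarithmic factor is lost — and using the spectral localization bound $\|u_\l(t)\|_{H^1(\Omega)}\approx\l\|u_\l(t)\|_{L^2(\Omega)}$, one obtains
\[
\|u_\l\|_{L^2([0,T],L^q)}\lesssim\l^{-\frac2q}(\log\l)^2\Bigl(\|u_\l\|_{L^2([0,T],H^1)}+\|F_\l\|_{L^2([0,T],L^2)}\Bigr)\,.
\]
Since $T\le1$, the Cauchy--Schwarz inequality in $t$ bounds $\|F_\l\|_{L^2([0,T],L^2)}$ by $\|F_\l\|_{L^\infty([0,T],L^2)}$, so \eqref{lamterm} reduces to the fixed-time bound $\|\beta(\l^{-2}H)F(u)\|_{L^2(\Omega)}\lesssim\l^{-\epsilon}\bigl\langle\|u\|_{H^1(\Omega)}\bigr\rangle^{7/5}$ for some $\epsilon>0$.

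For this, I would first note that \eqref{Gcond} gives the pointwise bounds $|F(u)|\lesssim\langle u\rangle^{2/5}|u|$ and, via the chain rule (justified for $H^1$ data by approximation), $|\nabla F(u)|\lesssim\langle u\rangle^{2/5}|\nabla u|$. Since $n=3$ we have $H^1(\Omega)\hookrightarrow L^6(\Omega)$, hence $\langle u\rangle^{2/5}\in L^{15}(\Omega)$ with norm $\lesssim\langle\|u\|_{H^1}\rangle^{2/5}$; H\"older's inequality then gives $F(u)\in W^{1,30/17}(\overline\Omega)$ with $\|F(u)\|_{W^{1,30/17}(\overline\Omega)}\lesssim\langle\|u\|_{H^1}\rangle^{7/5}$. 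The Sobolev embedding $W^{1,30/17}(\overline\Omega)\hookrightarrow H^{4/5}(\overline\Omega)$ in dimension $3$ upgrades this to $\|F(u)\|_{H^\epsilon(\overline\Omega)}\lesssim\langle\|u\|_{H^1}\rangle^{7/5}$ for any $\epsilon\le\tfrac45$. Fixing $0<\epsilon<\tfrac12$, so that $H^\epsilon(\overline\Omega)$ coincides with the intrinsic space $\mathrm{Dom}(H^{\epsilon/2})$ irrespective of the boundary condition, the spectral theorem gives $\|\beta(\l^{-2}H)g\|_{L^2}\lesssim\l^{-\epsilon}\|H^{\epsilon/2}g\|_{L^2}\approx\l^{-\epsilon}\|g\|_{H^\epsilon(\overline\Omega)}$. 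Applying this with $g=F(u(t))$ and taking the supremum over $t\in[0,T]$ yields the displayed fixed-time bound, and hence \eqref{lamterm}.

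The summation over subintervals and the bookkeeping of powers of $\l$ are routine; the one substantive input is the fractional regularity estimate for $F(u)$, and it is precisely there that the growth hypothesis~\eqref{Gcond} — which keeps $F$ comfortably energy-subcritical — is used, both to produce a genuine gain $\epsilon>0$ and to pin down the exponent $7/5$ on $\langle\|u\|_{H^1}\rangle$. The only technical care required is the identification of the fractional Sobolev spaces near $\partial\Omega$, which is harmless for $\epsilon<\tfrac12$, and the justification of the chain-rule computation for $F(u)$ when $u\in H^1(\Omega)$ need not be bounded; both are standard.
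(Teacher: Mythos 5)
Your proposal is correct and follows essentially the same route as the paper: subdivide $[0,T]$ into $\l^{-1}$-length intervals, apply Lemma \ref{loglosslemma} and square-sum, then bound $\|F_\l\|_{L^2_TL^2}$ by $\|F_\l\|_{L^\infty_TL^2}\lesssim\l^{-\epsilon}\|F(u)\|_{L^\infty_TW^{1,30/17}}\lesssim\l^{-\epsilon}\langle\|u\|_{L^\infty_TH^1}\rangle^{7/5}$ via H\"older with $\langle u\rangle^{2/5}\in L^{15}$ and the embedding $W^{1,30/17}\hookrightarrow H^{4/5}$. Your extra care about restricting $\epsilon<\tfrac12$ to identify the fractional spectral Sobolev space, and about justifying the chain rule, only makes explicit points the paper leaves implicit.
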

\begin{proof}
We divide $[0,T]$ into subintervals of length $\l^{-1}$. We apply \eqref{l2lqest} on each such subinterval, and square sum over subintervals to obtain
$$
\|u_\l\|_{L^2([0,T],L^q)}\le 
C\,\l^{-\frac 2q}(\log\l)^2
\Bigl(\|u_\l\|_{L^2([0,T],H^1)}+\|F_\l\|_{L^2([0,T],L^2)}\Bigr)\,,
$$
where $F_\l=F(u)_\l$.
We now take
$$
\alpha=\frac 25\,,\qquad r=\frac{6}{3+\alpha}\,,\qquad \epsilon=3\Bigl(\frac 1r-\frac 12\Bigr)-1\,,
$$
and observe that
\begin{align*}
\|F_\l\|_{L^\infty L^2}
&\lesssim \l^{-\epsilon}\|F\|_{L^\infty W^{1,r}}\\
&\lesssim \l^{-\epsilon}\|\langle u\rangle^\alpha(|d_x u|+\langle u\rangle)\|_{L^\infty L^r}\\
&\lesssim \l^{-\epsilon}\|\langle u\rangle\|_{L^\infty L^6}^\alpha
\|(|d_x u|+\langle u\rangle)\|_{L^\infty L^2}\\
&\lesssim \l^{-\epsilon}\bigl(1+\|u\|_{L^\infty H^1}\bigr)^{\alpha+1}\,.
\end{align*}
\end{proof}

Sobolev embedding yields 
$\|u_{<T^{-1}}\|_{L^2_T L^q}\lesssim T^{-\frac 12}\|u\|_{L^2_T H^1}\lesssim \|u\|_{L^\infty_T H^1}$,
where $u_{<T^{-1}}$ denotes the sum of $u_\l$ over $\l<T^{-1}$.
Summing \eqref{lamterm} over $\l=2^{-k}$, and using Cauchy-Schwarz over $k$, we conclude that, 
with $C$ uniform over $q\ge 6$,
\begin{equation}\label{l2lqbound}
\|u\|_{L^2([0,T],L^q)}\le C\,q^{\frac 52}\,\bigl(1+\|u\|_{L^\infty H^1}\bigr)^{7/5}\,.
\end{equation}

Suppose now that $u$ satisfies \eqref{nlsintform} on a time interval $[0,T]$, 
where $u(t_0)\in H^1(\Omega)$. For sufficiently regular solutions $u$, we have the conservation laws
\begin{equation}\label{conservlaw}
\begin{split}
\int_\Omega |u(t)|^2&=\int_\Omega |u(t_0)|^2\\
\int_\Omega |d_x u(t)|_\g^2+G(|u(t)|^2) &=\int_\Omega |d_x u(t_0)|_\g^2+G(|u(t_0)|^2)
\end{split}
\end{equation}
In particular, since $-C\le G(r)\le C\langle r\rangle^{\frac 65}$, it follows that
$\|u\|_{L^\infty([0,T],H^1)}\lesssim 1+\|u(t_0)\|_{H^1}$, uniformly in $T$.

In the following proof, we assume a priori that $u\in L^\infty H^1$ and prove uniqueness
of such solutions.
The existence of bounded energy solutions, and energy conservation,
is then proved by a weak-limit argument.

\begin{theorem}
For each data $f\in H^1(\Omega)$, and all $T>0$, there exists a unique solution $u$ to
the equation \eqref{nlsintform},
subject to the condition $u\in L^\infty([0,T],H^1(\Omega))$.
Furthermore, the solution satisfies the conservation laws \eqref{conservlaw}.
\end{theorem}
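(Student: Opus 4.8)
\noindent
The plan is to prove existence, uniqueness, and the conservation laws \eqref{conservlaw} in turn, uniqueness being the heart of the matter and \eqref{l2lqbound} its essential input, in the spirit of the Yudovich argument of \cite[\S 3.3]{burq1}. A recurring ingredient is the \emph{a priori energy bound}: if a solution $u$ satisfies \eqref{conservlaw}, then conservation of $\|u\|_{L^2}$, the Gagliardo--Nirenberg inequality $\|u\|_{L^{12/5}}^{12/5}\lesssim\|u\|_{L^2}^{9/5}\|d_xu\|_{L^2}^{3/5}$ (valid for $n=3$), and the consequence $|G(r)|\lesssim\langle r\rangle^{6/5}$ of \eqref{Gcond} and $G(0)=0$ together allow Young's inequality to absorb the potential term, yielding $\|u\|_{L^\infty([0,T],H^1(\Omega))}\lesssim\langle\|f\|_{H^1}\rangle$ uniformly in $T$. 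It is this bound, applied to the approximations used for existence, that turns the factor $(1+\|u\|_{L^\infty H^1})^{7/5}$ in \eqref{l2lqbound} into a fixed constant and makes the construction global; for uniqueness, membership in $L^\infty_TH^1$ is part of the hypothesis.

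\medskip

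\noindent\emph{Uniqueness.} Suppose $u_1,u_2\in L^\infty([0,T],H^1(\Omega))$ solve \eqref{nlsintform} with the same data; put $E=1+\max_i\|u_i\|_{L^\infty_TH^1}$ and $w=u_1-u_2$. Since $|F(u)|\le\langle u\rangle^{7/5}$ and $H^1\hookrightarrow L^{14/5}$ we have $F(u_i)\in L^\infty_TL^2$, so from \eqref{nlsintform} and the $L^2$-unitarity of $e^{it\Delta}$, $w\in C([0,T],L^2)$ with $\|w(t)\|_{L^2}\le\int_0^t\|F(u_1)-F(u_2)\|_{L^2}\,ds$. Fix $q\ge 6$. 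The pointwise bound $|F(u_1)-F(u_2)|\lesssim(\langle u_1\rangle^{2/5}+\langle u_2\rangle^{2/5})|w|$, from $|d_uF|\le\langle u\rangle^{2/5}$, together with H\"older's inequality (exponents $\tfrac{5q}{2}$ and $b$, $\tfrac1b=\tfrac12-\tfrac{2}{5q}$), the interpolation $\|w\|_{L^b}\le\|w\|_{L^2}^\theta\|w\|_{L^6}^{1-\theta}$, and $\|w\|_{L^6}\lesssim\|w\|_{H^1}\le 2E$, gives
\[
\|F(u_1(s))-F(u_2(s))\|_{L^2}\lesssim E\,h_q(s)\,\|w(s)\|_{L^2}^\theta,\qquad
h_q(s)=\bigl(1+\|u_1(s)\|_{L^q}+\|u_2(s)\|_{L^q}\bigr)^{2/5},
\]
where an elementary computation gives $1-\theta=\tfrac{6}{5q}$. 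The decisive observation is that \eqref{l2lqbound} bounds $\|h_q\|_{L^5([0,T])}$ by $\lesssim_T q\,E^{14/25}$ --- the exponent $\tfrac52$ of $q$ in \eqref{l2lqbound} being raised to the power $\tfrac25$ --- so that $(1-\theta)\,\|h_q\|_{L^5_T}\lesssim_T 1$ \emph{uniformly in $q$}. Feeding this into the Bihari inequality $\|w(t)\|_{L^2}\lesssim E\int_0^t h_q(s)\|w(s)\|_{L^2}^\theta\,ds$, with $w(0)=0$, yields $\|w(t)\|_{L^2}^{1-\theta}\le A\,t^{4/5}$ with $A=A(E,T)$ \emph{independent of $q$}, hence $\|w(t)\|_{L^2}\le(At^{4/5})^{5q/6}$; letting $q\to\infty$ forces $w\equiv 0$ on $[0,\tau_0]$ for some $\tau_0(E,T)>0$, and since $E$ does not grow in time this iterates to $w\equiv 0$ on $[0,T]$.

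\medskip

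\noindent\emph{Existence.} I would construct the solution by Galerkin approximation. Let $\Pi_N$ be the spectral projection of $-\Delta_\g$ onto its first $N$ Dirichlet (resp.\ Neumann) eigenfunctions, and solve $i\partial_tu_N+\Delta_\g u_N=\Pi_NF(u_N)$, $u_N(0)=\Pi_Nf$, on the finite-dimensional space $\Pi_NL^2$, where the right side is continuous. Because $\Pi_N$ commutes with $\Delta_\g$ and $u_N,\partial_tu_N\in\Pi_NL^2$, both $\int_\Omega|u_N|^2$ and $\int_\Omega(|d_xu_N|_\g^2+G(|u_N|^2))$ are exactly conserved, so by the a priori bound $\|u_N\|_{L^\infty(\R,H^1)}\le M(\|f\|_{H^1})$ uniformly and the $u_N$ are global. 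As $\partial_tu_N=i\Delta_\g u_N-i\Pi_NF(u_N)$ is bounded in $L^\infty_TH^{-1}$, the Aubin--Lions lemma (using $H^1\hookrightarrow\hookrightarrow L^2\hookrightarrow H^{-1}$) gives a subsequence converging strongly in $C([0,T],L^2)$ and weak-$\ast$ in $L^\infty_TH^1$ to some $u\in L^\infty([0,T],H^1(\Omega))$; interpolating with the uniform $L^6$ bound upgrades the convergence to $C([0,T],L^p)$ for $2\le p<6$. This is enough to pass to the limit in \eqref{nlsintform}: $F(u_N)\to F(u)$ in $L^1_TL^2$ by a H\"older estimate as above with a fixed exponent, while $(\Pi_N-I)F(u_N)\to 0$ in $L^1_TL^2$ because $\Pi_N\to I$ strongly on $L^2$ and $\{F(u_N)\}$ is equicontinuous in $C([0,T],L^2)$. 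As $T$ was arbitrary, this yields global existence.

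\medskip

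\noindent\emph{Conservation laws and the main obstacle.} Conservation of $\|u\|_{L^2}^2$ is immediate from $\tfrac{d}{dt}\|u(t)\|_{L^2}^2=2\,\Re\langle\partial_tu,u\rangle=0$, valid since $\partial_tu\in L^\infty_TH^{-1}$, $u\in L^\infty_TH^1$, and $\langle\Delta_\g u,u\rangle$ and $\langle F(u),u\rangle=\int_\Omega G'(|u|^2)|u|^2$ are both real. For the energy, weak lower semicontinuity of $v\mapsto\|d_xv\|_{L^2}^2$ and the strong $L^{12/5}$-convergence of the $u_N$ give $\int_\Omega(|d_xu(t)|_\g^2+G(|u(t)|^2))\le\int_\Omega(|d_xf|_\g^2+G(|f|^2))$; applying the same inequality to $\overline{u(t-\cdot)}$ on $[0,t]$, which solves \eqref{nlsintform} and, by the uniqueness just proved, is again a Galerkin limit, yields the reverse inequality and hence equality. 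I expect the uniqueness step to be the main obstacle: one must carry the $q$-dependence faithfully through H\"older, the $L^2$--$L^6$ interpolation, and the Bihari inequality, and verify that the growth $\langle r\rangle^{1/5}$ imposed by \eqref{Gcond} is exactly slow enough to cancel the $q^{5/2}$ loss in \eqref{l2lqbound}.
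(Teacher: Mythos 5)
Your argument is correct, and while the uniqueness step is essentially the paper's own Yudovich argument (you run it through the Duhamel integral form and a Bihari inequality rather than differentiating $\|u-v\|_{L^2}^2$, but the bookkeeping of the $q$-dependence --- the factor $q^{5/2}$ from \eqref{l2lqbound} raised to the power $\tfrac25$ against the exponent $1-\theta\approx \tfrac{6}{5q}$ --- is the same cancellation), the existence and conservation steps take a genuinely different route. The paper regularizes the \emph{nonlinearity}, replacing $G$ by smooth compactly supported $G_j$ so that $F_j$ is globally Lipschitz on $L^2$ and Picard iteration applies; the price is that conservation of energy for the approximate flows is not automatic and must be justified by an $H^2$ propagation argument on a short time interval, which in turn invokes the $L^4W^{1,4}$ Strichartz estimate of Theorem \ref{intthm}. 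You instead discretize in \emph{space} via spectral Galerkin projections $\Pi_N$; since $\Pi_N$ commutes with $\Delta_\g$ and $\partial_t u_N$ lies in the range of $\Pi_N$, the pairing $\langle \Pi_N F(u_N),\partial_tu_N\rangle=\langle F(u_N),\partial_tu_N\rangle$ makes both conservation laws exact at the discrete level, so the $H^2$ bootstrap (and the use of Theorem \ref{intthm}) is avoided entirely, at the modest cost of having to remove the projection $(\Pi_N-I)F(u_N)$ in the limit and of an Aubin--Lions compactness step in place of the paper's Rellich/Arzel\`a--Ascoli argument. Both approaches recover the energy \emph{identity} from the one-sided inequality in the same way, by time reversal plus uniqueness. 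Two small remarks: your a priori $H^1$ bound via Gagliardo--Nirenberg is more elaborate than necessary, since the standing hypothesis that $G$ is bounded below already gives $\int_\Omega G(|u|^2)\ge -C|\Omega|$ and hence the gradient bound directly; and in the Galerkin step you should note explicitly that local solvability of the ODE on $\Pi_NL^2$ uses the equivalence of norms on that finite-dimensional space to make $u\mapsto F(u)$ locally Lipschitz into $L^2$, with globality then following from the exactly conserved mass.
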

\begin{proof}
We start with the uniqueness of solutions. Since $u\in L^\infty([0,T],H^1)$
it follows by Sobolev embedding that $F(u)\in L^\infty L^2$, so $u\in C([0,T],L^2)$, and
by interpolation $u\in C([0,T],H^s)$ for all $s<1$. Repeating this argument shows that the term
in parentheses in \eqref{nlsintform} belongs to $C^1([0,T],L^2)$.

Let $u$ and $v$ be two solutions to \eqref{nlsintform}, with $u(0)=v(0)$.
By unitarity of $\exp(it\Delta)$, 
\begin{align*}
\frac d{dt}\|u(t)-v(t)\|_{L^2}^2
&=\frac d{dt}\bigl\|e^{-it\Delta}\bigl(u(t)-v(t)\bigr)\bigr\|_{L^2}^2\\
&=2\,\Im\bigl\langle F(u(t))-F(v(t)),u(t)-v(t) \bigr\rangle\\
&\le C\int_\Omega \bigl(\langle u(t)\rangle^{2/5}+\langle v(t)\rangle^{2/5}\bigr)|u(t)-v(t)|^2\\
&\le C\bigl(1+\|u(t)\|_{L^{2q/5}}+\|v(t)\|_{L^{2q/5}}\bigr)^{\frac 25}\|u(t)-v(t)\|_{L^{2q'}}^2
\end{align*}
provided $q\ge 5/2$.
Since $\|u(t)-v(t)\|_{L^6}\le C$, we may interpolate to bound
$$
\|u(t)-v(t)\|_{L^{2q'}}\le C\|u(t)-v(t)\|_{L^2}^{1-\frac 3{2q}}\,.
$$
Setting $g(t)=\|u(t)-v(t)\|_{L^2}^2$, and noting $g(0)=0$, we have upon integrating that
$$
g(\tau)^{\frac 3{2q}}\le 
\frac Cq\int_0^\tau\Bigl(\|u(t)\|_{L^{2q/5}}^{\frac 25}+\|v(t)\|_{L^{2q/5}}^{\frac 25}\Bigr)\,dt
+\frac{C\tau}{q}\,.
$$
By H\"older's inequality and \eqref{l2lqbound}, for $\tau\in[0,T]$
$$
\int_0^\tau \|u(t)\|_{L^{2q/5}}^{\frac 25}\,dt\le 
\tau^{\frac 45}\|u\|_{L^2([0,T],L^{2q/5})}^{\frac 25}\le C\tau^{\frac 45}q\,.
$$
Consequently,
$$
g(\tau)\le \biggl(C\tau^{\frac 45}+\frac {C\tau}q\biggr)^\frac{2q}{3}
$$
which goes to $0$ as $q\rightarrow\infty$, provided $\tau$ is small depending on $C$. Repeating
the argument yields uniqueness on $[0,T]$.

To establish existence and energy conservation for
\eqref{nlsintform} with $H^1$ data, we 
let $G_j(r)$ be a family of smooth, compactly supported real valued functions on 
$[0,\infty)$, uniformly bounded below, such that $G_j(r)$ and $G_j'(r)$ converge uniformly
on compact sets to $G(r)$ and $G'(r)$. Additionally, we require that \eqref{Gcond} holds uniformly over $j$ for $G=G_j$.

We fix a time $t_0$ and initial data $u(t_0)\in H^1(\Omega)$, and let $u_j(t)$ solve \eqref{nlsintform} where $F$ is replaced by $F_j=G_j'(|u|^2)u$. We assume for the moment that $u_j$ exists globally in time, and satisfies the conservation law \eqref{conservlaw}, with $G$ replaced by $G_j$.
In particular 
$$
\|u_j\|_{L^\infty H^1}\le C\bigl(1+\|u(t_0)\|_{H^1}\bigr)\quad \text{uniformly over} \; j\,.
$$
By \eqref{nlsintform}, $\exp(-i(t-t_0)\Delta)u_j(t)$ is uniformly bounded in 
$C^1L^2\cap L^\infty H^1\subset C^\hf H^\hf$, hence by the theorems
of Rellich and Arzela-Ascoli, some subsequence of $u_j$ converges uniformly in the $L^2$ norm
on each finite time interval to $u(t)$, in the sense that
$$
\lim_{n\rightarrow\infty}\|u_{j(n)}-u\|_{C([-T,T],L^2)}=0\quad\text{for all}\;\,T<\infty\,.
$$
It follows that $u\in L^\infty H^1$, and thus
by interpolation that for all $s<1$,
$$
\lim_{n\rightarrow\infty}\|u_{j(n)}-u\|_{C([-T,T],H^s)}=0\quad\text{for all}\;\,T<\infty\,.
$$
By Sobolev embedding we deduce that $F_j(u_j)\rightarrow F(u)$ in $C([-T,T],L^2)$, hence
$u$ is the solution to \eqref{nlsintform}, unique by above. The conservation of mass in
\eqref{conservlaw} follows by uniform convergence in the $L^2$ norm and conservation of mass
for $u_j$. To conclude, we observe that by energy conservation for $u_j$ and Fatou's lemma, for each $t_1$ we have
\begin{equation}\label{energyineq}
\int_\Omega |d_x u(t_1)|_\g^2+G(|u(t_1)|^2) \le \int_\Omega |d_x u(t_0)|_\g^2+G(|u(t_0)|^2)\,.
\end{equation}
On the other hand, $u$ is the unique solution with data $u(t_1)$ at time $t_1$, and
the inequality is thus symmetric under exchange of $t_0$ and $t_1$.

It remains to prove existence of energy conserving solutions to \eqref{nlsintform} for
$H^1$ data, in case $G(r)\in C^\infty_c(\R)$. For convenience set $t_0=0$.
We introduce $w(t)=\exp(-it\Delta)u(t)$, and write \eqref{nlsintform} as
\begin{equation}\label{nlsintformw}
w(t)=u_0-i\int_0^t e^{-is\Delta}F\bigl(e^{is\Delta} w(s)\bigr)\,ds\,.
\end{equation}

Since $F(z)=G'(|z|^2)z\in C^\infty_c(\CC)$,
the map $u\rightarrow F(u)$ is globally Lipschitz on $L^2(\Omega)$,
and one has existence, uniqueness, and Lipschitz dependence on initial data for 
$C^1L^2$ solutions of \eqref{nlsintformw}, given by the
limit of $w_n(t)$, where $w_0(t)=u_0$, and
\begin{equation}\label{nlsintformwn}
w_{n+1}(t)=u_0-i\int_0^t e^{-is\Delta}F\bigl(e^{is\Delta} w_n(s)\bigr)\,ds\,.
\end{equation}
Convergence of $w_n$ to $w$ is uniform in the $L^2$ norm on any compact interval. From unitarity
of $\exp(it\Delta)$ on $H^k$ (with norm defined spectrally), and the bound
$$
\|F(w(s))\|_{H^1}\le K\|w(s)\|_{H^1}\,,
$$
one sees from \eqref{nlsintformwn} and weak limits, and using \eqref{nlsintformw} to express $w'(t)$,
that 
\begin{equation}\label{wH1bounds}
\|w(t)\|_{H^1}\le \|u_0\|_{H^1}\exp(Kt)\,,\qquad \|w'(t)\|_{H^1}\le K\|u_0\|_{H^1}\exp(Kt)\,.
\end{equation}
It remains to prove the conservation laws \eqref{conservlaw} on an interval $[0,T]$, for a
$T$ depending only on $\|u_0\|_{H^1}$ and $F$; uniqueness yields global conservation. To do this,
we will prove for such a $T$ that if $u_0\in H^2$, then
$w\in C^1([0,T],H^2)$, hence $u\in C^1([0,T],L^2)$. Together this is sufficient regularity to
see that \eqref{conservlaw} holds on $[0,T]$ for $u_0\in H^2$. 
Density and Fatou's lemma yields mass conservation and
\eqref{energyineq} for $H^1$ data; uniqueness then yields \eqref{conservlaw}.

We start by noting that
$$
\|F(u(s))\|_{H^2}\lesssim \|u(s)\|_{W^{1,4}}^2+\|u(s)\|_{H^2}\lesssim \|u(s)\|_{H^2}^2+\|u(s)\|_{H^2}\,.
$$
Iterating \eqref{nlsintformwn} yields $\|u\|_{L^\infty([0,T'],H^2)}\le 2\|u_0\|_{H^2}$ for some $T'>0$ depending
on $\|u_0\|_{H^2}$.
It suffices then to
prove, for some $C$ and $T$ depending only on $\|u_0\|_{H^1}$, that if $T'\le T$ and
$\|u\|_{L^\infty([0,T'],H^2)}<\infty$, then $\|u\|_{L^\infty([0,T'],H^2)}\le C\|u_0\|_{H^2}$.
Theorem \ref{intthm} and \eqref{nlsintform} yield
\begin{align*}
\|u\|^2_{L^4([0,T'],W^{1,4})}
&\lesssim \|u_0\|^2_{H^{3/2}}+\Bigl(\int_0^{T'}\|F(u(s))\|_{H^{3/2}}\,ds\Bigr)^2\\
&\lesssim \|u_0\|_{H^1}\|u_0\|_{H^2}+\int_0^{T'}\|F(u(s))\|_{H^1}\|F(u(s))\|_{H^2}\,ds\,,
\end{align*}
and we can also use \eqref{nlsintform} to bound 
$\|u\|_{L^\infty_{T'}H^2}\le\|u_0\|_{H^2}+\|F(u)\|_{L^1_{T'}H^2}$.
By the bounds \eqref{wH1bounds}, we combine these estimates, assuming $T'\le T\le 1$, to yield
$$
\|u\|_{L^\infty_{T'}H^2}+\|F(u)\|_{L^2_{T'}H^2}\le 
C\|u_0\|_{H^2}+CT^{\hf}\|u\|_{L^\infty_{T'}H^2}+CT^{\hf}\|F(u)\|_{L^2_{T'}H^2}\,,
$$
where $C\lesssim\|u_0\|_{H^1}$. Taking $T$ small yields the desired result.
\end{proof}
We conclude by noting that the above argument shows that $u\in C([0,T],H^2)$ for all finite
$T$ if $u_0\in H^2$, but possibly with exponential growth of the $H^2$ norm, with the growth
constant depending on $\|u_0\|_{H^1}$.

\bigskip

\noindent\textbf{Acknowledgements.}
The authors would like to thank the referee for suggesting the application of our methods to semilinear Schr\"odinger equations on compact manifolds in Section \ref{bddomain}.

%%%%%%%%%%%%%%%%%%%%%%%%%%%%%%%%%%%%%%%%%%%%%%%%%%%


\begin{thebibliography}{99}

\bibitem{antonext} Anton, R.:
{Global existence for defocusing cubic NLS and Gross-Pitaevskii equations in exterior domains.}
J. Math. Pures Appl. {\bf 89}(4), 335--354 (2008)

\bibitem{BSSpams} Blair, M.D., Smith, H.F., and Sogge, C.D.:
{On Strichartz Estimates for Schr\"{o}dinger Operators in Compact Manifolds with Boundary.}
Proc. Amer. Math. Soc. \textbf{136}, 247--256 (2008)

\bibitem{bdrystz} Blair, M.D., Smith, H.F., and Sogge, C.D.:
{Strichartz estimates for the wave equation on manifolds with boundary.}
Ann. Inst. H. Poincar\'e Anal. Non Lin\'eaire {\bf 26}(5), 1817--1829 (2009)

\bibitem{bgtexterior} Burq, N., G\'erard, P., and Tzvetkov, N.:
{On nonlinear Schrodinger equations in exterior domains.}
Ann. Inst. H. Poincar\'e Anal. Non Lin\'eaire {\bf 21}(3), 295--318 (2004)

\bibitem{burq1} Burq, N., G\'erard, P., and Tzvetkov, N.:
{Strichartz inequalities and the nonlinear Schr\"odinger equation on compact manifolds.} 
Amer. J. Math. \textbf{126}, 569--605 (2004)

\bibitem{consaut} Constantin, P. and Saut, J.C.:
{Local smoothing properties of dispersive equations.}
J. Amer. Math Soc. \textbf{1}, 413-439 (1988)

\bibitem{Dav} Davies, E.B.:
{Heat Kernels and Spectral Theory}.  
Cambridge University Press, Cambridge, 1990.

\bibitem{ginvelo85} Ginibre, J. and Velo, G.:
{On the global Cauchy problem for some nonlinear Schr\"odinger equations.}
Ann. Inst. H. Poincar\'e Anal. Non Lin\'eaire \textbf{1}(4), 309--323 (1984)

\bibitem{Herr} Herr, S.:
{The quintic nonlinear Schr\"odinger equation on three-dimensional Zoll manifolds.}
To appear in Amer. J. Math.

\bibitem{HTT} Herr, S., Tataru, D., and Tzvetkov, N.:
{Global well-posedness of the energy critical Nonlinear Schr\"odinger equation with small initial data in $H^1(T^3)$.}
Duke Math.~J. \textbf{159}(2), 329--349 (2011)

\bibitem{IoPa} Ionescu, A.D. and Pausader, B.:
{The energy-critical defocusing NLS on $T^3$.}
To appear in Duke Math.~J.

\bibitem{ivanovici} Ivanovici, O.:
{On the Schr\"{o}dinger equation outside strictly convex obstacles.}
Anal. PDE \textbf{3}(3), 261–-293 (2010)

\bibitem{IP} Ivanovici, O. and Planchon, F.:
{On the energy critical Schr\"odinger equation in 3D non-trapping domains.}
Ann. Inst. H. Poincar\'e Anal. Non Lin\'eaire \textbf{27}(5), 1153–-1177 (2010)

\bibitem{jss} Journ\'e, J.L., Soffer, A., and Sogge, C.D.:
{Decay estimates for Schr\"odinger operators.}
Comm. Pure Appl. Math. \textbf{44}(5), 573--604 (1991)

\bibitem{keeltao98} Keel, M. and Tao, T.:
{Endpoint Strichartz Estimates.}
Amer. J. Math. \textbf{120}, 955--980 (1998)

\bibitem{KT} Koch, H. and Tataru, D.:
{Dispersive estimates for principally normal operators.}
Comm. Pure Appl. Math. \textbf{58}, 217--284 (2005)

\bibitem{planvega} Planchon, F. and Vega, L.:
{Bilinear Virial Identities and Applications.}
Ann. Sci. \'Ec. Norm. Sup\'er. (4) {\bf 42}(2), 261--290 (2009)

\bibitem{Sjolin} Sj\"olin, P.:
{Regularity of solutions to Schr\"odinger equations.}
Duke Math J. \textbf{55}, 699--715 (1987)

\bibitem{ST} Staffilani, G. and Tataru, D.:
{Strichartz estimates for a Schr\"odinger operator with nonsmooth coefficients.} 
Comm. Partial Differential Equations \textbf{27}(7-8), 1337--1372 (2002)

\bibitem{SmSoBdry} Smith, H.F. and Sogge, C.D.:
{On the $L^p$ norm of spectral clusters for compact manifolds with boundary}.  
Acta Math. \textbf{198}, 107--153 (2007)

\bibitem{strich77} Strichartz, R.:
{Restriction of Fourier transform to quadratic surfaces and decay of solutions to the wave equation.} 
Duke Math.~J. \textbf{44}(3), 705--714 (1977)

\bibitem{Tay} Taylor, M.:
{Pseudodifferential Operators and Nonlinear PDE}.
Progress in Mathematics, vol. 100, Birkh\"auser, Boston, 1991.

\bibitem{vegasmooth} Vega, L.:
{Schr\"odinger equations: pointwise convergence to the initial data.}
Proc. Amer. Math Soc \textbf{102}, 874--878 (1988)

\end{thebibliography}
\end{document}